\def\e{{\varepsilon}}
\newcommand\R{\mathbb{R}}
\newcommand\N{\mathbb{N}}
\newcommand\T{\mathbb{T}}
\newcommand\Z{\mathbb{Z}}
\newcommand\NLI{{\rm I}}
\newcommand{\NLII}{{\rm I\!I}}
\newcommand{\NLIII}{{\rm I\!I\!I}}
\renewcommand\H{\mathcal{H}}
\newcommand\bna{\begin{eqnarray*}}
\newcommand\ena{\end{eqnarray*}}
\newcommand\bnan{\begin{eqnarray}}
\newcommand\enan{\end{eqnarray}}
\newcommand\bnp{\begin{proof}}
\newcommand\enp{\end{proof}}
\newcommand\bneq{\begin{eqnarray*}\left\lbrace \begin{array}{rcl}}
\newcommand\eneq{\end{array} \right.\end{eqnarray*}}
\newcommand\bneqn{\begin{eqnarray}\left\lbrace \begin{array}{rcl}}
\newcommand\eneqn{\end{array} \right.\end{eqnarray}}
\newcommand{\U}{\mathcal{U}}
\newcommand{\W}{\mathcal{W}}
\newcommand{\IW}{\mathcal{IW}}
\newcommand\G{\mathcal{G}}
\newcommand\nor[2]{\left\|#1\right\|_{#2}}
\newcommand\refp[1]{(\ref{#1})}
\newcommand\petito[1]{o(#1)}
\newcommand\grando[1]{\mathcal{O}(#1)}
\newtheorem{theorem}{Theorem}[section]
\newtheorem{remark}{Remark}[section]
\newtheorem{lemme}{Lemma}[section]
\newtheorem{prop}{Proposition}[section]
\numberwithin{equation}{section}
\newtheorem*{TA}{Theorem A}
\newtheorem*{TB}{Theorem B}
\newtheorem*{TC}{Theorem C}
\newtheorem*{TD}{Theorem D}
\newtheorem*{TE}{Theorem E}
\title{Control and Stabilization of the Benjamin-Ono equation  in $L^2(\mathbb T)$}
\author{Camille Laurent}
\address{Laboratoire Jacques-Louis Lions, 
Universit\'e Pierre et Marie Curie,
Bo{\'i}te courrier 187, 
75252 Paris Cedex 05, France}
\email{laurent@ann.jussieu.fr}
\author{Felipe Linares}
\address{Instituto de Matematica Pura e Aplicada,
Estrada Dona Castorina 110, Rio de Janeiro 22460-320, Brazil}
\email{linares@impa.br}
\author{Lionel Rosier}
\address{Institut Elie Cartan, UMR 7502 UdL/CNRS/INRIA,
B.P. 70239, F-54506 Vand\oe uvre-l\`es-Nancy Cedex, France}
\email{Lionel.Rosier@univ-lorraine.fr}
\begin{document}

\begin{abstract}
We prove the control and stabilization of the Benjamin-Ono equation in $L^2(\T)$, the lowest regularity where the initial value problem
is well-posed. This problem was already initiated in \cite{LinaresRosierBO}  where a stronger stabilization term was used (that makes
the equation of parabolic type in the control zone). Here we employ a more natural stabilization term related to the $L^2$ norm. 
Moreover, by proving a theorem of controllability in $L^2$, we manage to prove the global controllability in large time. Our analysis relies strongly 
on the bilinear estimates proved in \cite{MolinetPilodBO} and some new extension of these estimates established here.
\end{abstract}

\maketitle

\section{Introduction}

In this paper, we consider the BO equation posed on the periodic domain $\T =\R /(2\pi\Z)$:
\begin{equation}
\label{BOT}
u_t + {\mathcal H} u_{xx} - uu_x=0,\quad x\in\T,\ t\in\R,
\end{equation}
where  $u$ is real valued and the Hilbert transform $\mathcal H$ is defined via Fourier transform as 
\[
(\widehat{{\mathcal H}u})(\xi)=-i\, \text{sgn}(\xi)\widehat{u} (\xi),\hskip10pt \xi\in\Z.
\]

The BO equation, posed on the line,  arises as a model in waves propagation in stratified fluids (see  \cite{benjamin}, and \cite{ono}) and has widely 
been studied in many different contexts (see \cite{ABFS}, \cite{AF}, \cite{CAJT}, \cite{BP}, \cite{CoWi},
\cite{FL}, \cite{FoPo}, \cite{GFFLGP},\cite{IoKe},  \cite{Io1}, \cite{KeKo}, \cite{KoTz1},\cite{MR},\cite{MoSaTz}, \cite{MolinetBOill}, \cite{Po}, \cite{Sa}, \cite{tao}).

In the periodic setting the best  known result up to date regarding well-posedness for the Cauchy problem was obtained by Molinet \cite{molinetBOT} 
(see also \cite{MolinetPilodBO}) which guarantees the global well-posedness of the problem \eqref{BOT} for  initial $L^2$ data. 

The BO equation possesses  an infinite number of conserved quantities (see \cite{case}). The first two conserved quantities are
\begin{equation*}
I_1(t)=\int_{\T} u(x,t)\,dx
\end{equation*}
and
\begin{equation*}
I_2(t)= \int_{\T} u^2(x,t)\,dx.
\end{equation*}
Since the BO equation was derived to study the propagation of interfaces  of stratified fluids, it is
natural to think $I_1$ and $I_2$ as expressing conservation of volume (or mass) and energy, respectively.

Here we will study the equation \eqref{BOT} from a control point of view with a forcing term $f=f(x,t)$ added to the equation as a
control input:
\begin{equation}
\label{BOc}
u_t + {\mathcal H} u_{xx} -  uu_x=f(x,t),\quad x\in\T,\ t\in\R,
\end{equation}
where $f$ is assumed to be supported in a given open set $\omega \subset \T$.  In control theory the following  problems are essential: 

{\bf Exact Control Problem:} Given an initial state $u_0$ and a terminal state $u_1$ in a certain space, can one find an appropriate
control input $f$ so that the equation \eqref{BOc} admits a solution $u$ which satisfies $u(\cdot,0)=u_0$ and $u(\cdot,T)=u_1$?

{\bf Stabilization Problem:} Can one find a feedback law $f=Ku$ so that the resulting closed-loop system
\[
u_t + {\mathcal H} u_{xx} -uu_x=Ku,\quad x\in\T,\ t\in\R ^+
\]
is asymptotically stable as $t\to +\infty$?

Those questions were first investigated by Russell and Zhang in \cite{RZ} for the Korteweg-de Vries equation, which serves
as a model for propagation of surface waves along a channel:
\begin{equation}
u_t + u_{xxx} - uu_x=f,\quad x\in\T, \ t\in\R .
\label{BOf}
\end{equation}
In their work, in order to keep
the {\em mass} $I_1(t)$ conserved, the control input  was chosen of the form
\begin{equation}\label{operG}
f(x,t)=(\mathcal{G}h)(x,t):=a(x)\big( h(x,t)-\int_{\T} a(y)h(y,t)\, dy \big)
\end{equation}
where $h$ is considered as a new control input, and $a(x)$ is a given nonnegative smooth function such that
$\{x\in \T; \  a(x) >0\}=\omega$ and
$$
2\pi [a]=\int_{\T} a(x)\, dx=1.
$$


For the chosen $a$, it is easy to see that
$$
\frac{d}{dt}\int_{\T} u(x,t)\, dx=\int_{\T} f(x,t)dx=0\quad \forall t\in\R
$$
for any solution $u=u(x,t)$ of the system
\begin{equation} \label{BOG}
u_t + u_{xxx}-  uu_x=\G h,\quad x\in\T,\ t\in\R .
\end{equation}
Thus the {\em mass} of the system is indeed conserved.

The control of dispersive nonlinear waves equations on a periodic domain has been extensively studied
in the last decade: see e.g. \cite{RZ,RZ2009bis,LRZ} for the Korteweg-de Vries equation, \cite{MORZ} for the Boussinesq system,
\cite{RZ2012} for the BBM equation, and
\cite{DGL,RZ2007b,laurent,RZ2009,laurent2} for the nonlinear Schr\"odinger equation.
By contrast, the control theory of the BO equation is at its early stage.  The linear problem was studied by Linares and Ortega
 \cite{LO}. They established the following results regarding control and stabilization in this case.
\begin{TA}[\cite{LO}]\label{LOcontrol}
Let $s\ge 0$ and $T>0$ be given. Then for any $u_0,u_1\in H^s(\T )$ with $[u_0]=[u_1]$ one can find a control input $h\in L^2(0,T,H^s(\T))$ such that
the solution of the system
\begin{equation}
\label{BOlin}
u_t + {\mathcal H} u_{xx} =\G h, \qquad u(x,0)=u_0(x)
\end{equation}
satisfies $u(x,T)=u_1(x)$.
\end{TA}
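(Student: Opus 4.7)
My plan is to apply the Hilbert Uniqueness Method to dualize exact controllability into an observability inequality for the free equation, and then to establish that observability by Fourier analysis combined with a compactness-uniqueness argument. Since $\int_\T \G h\,dx=0$ for every $h$, the mean $[u(t)]$ is conserved under \eqref{BOlin}, so the hypothesis $[u_0]=[u_1]$ is necessary; subtracting the common mean reduces matters to the zero-mean subspace $H^s_0(\T)$. A direct computation shows $\G=\G^*$ on $L^2$, while $(\mathcal H\partial_{xx})^*=-\mathcal H\partial_{xx}$ since $\mathcal H^*=-\mathcal H$ and the two Fourier multipliers commute. The dual equation therefore coincides with the free equation $v_t+\mathcal H v_{xx}=0$, and by HUM Theorem~A is equivalent to the observability inequality
\[
\|v_0\|_{H^s}^2 \le C_T \int_0^T \|\G v(\cdot,t)\|_{H^s}^2\, dt, \qquad v_0\in H^s_0(\T),
\]
where $v$ denotes the free solution with datum $v_0$.

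I would first prove the case $s=0$ and recover general $s$ by pulling $\partial_x^s$ inside $\G$; the commutator $[\partial_x^s,a]$ has lower order and is absorbed at the end by a standard compactness-uniqueness argument. For the $L^2$ estimate, expand
\[
v(x,t)=\sum_{k\ne 0} c_k\, e^{i(kx-k|k|t)}.
\]
The phases $\omega_k=k|k|$ satisfy $|\omega_k-\omega_j|\ge 2$ for $k\ne j$ and $|\omega_{k+1}-\omega_k|=2|k|+1\to\infty$. Hence for $T>\pi$ Ingham's inequality applied pointwise in $x$ yields $\int_0^T|v(x,t)|^2\,dt \ge c_T\|v_0\|_{L^2}^2$; integrating against the weight $a(x)^2$, which is bounded below on $\omega$, produces the corresponding lower bound on $\int_0^T\|a v\|_{L^2}^2\,dt$.

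To cover arbitrarily small $T>0$, I would split the expansion at a cut-off $N(T)$ chosen so that $2N+1>2\pi/T$: Ingham's inequality then applies on $(0,T)$ to the high-frequency part $|k|>N$, while on the complementary finite-dimensional space observability reduces to the linear-algebraic fact that a non-trivial trigonometric polynomial cannot vanish on the open set $\omega$. To pass from the lower bound on $\int\|av\|_{L^2}^2\,dt$ to the desired lower bound on $\int\|\G v\|_{L^2}^2\,dt$, note that the rank-one correction $a(x)\int a v$ in $\G v$ is a compact perturbation; the final compactness-uniqueness step then proceeds as follows: if a normalized minimizing sequence violated the inequality, the weak limit $v_\infty$ would satisfy $\G v_\infty\equiv 0$ on $\omega\times(0,T)$, hence $v_\infty(\cdot,t)$ would be $x$-independent on $\omega$, and unique continuation for the linear BO equation together with the zero-mean constraint would force $v_\infty\equiv 0$, producing the required contradiction.

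The step I expect to be hardest is precisely this last one: running the compactness-uniqueness argument cleanly requires a unique continuation statement for the linear BO equation, which is delicate because the principal symbol $\mathcal H\partial_{xx}$ is non-local; one must also verify that the several compact remainders at play (the $[\partial_x^s,a]$ commutators, the rank-one projector in $\G$, and the low-frequency term where Ingham fails) can be absorbed simultaneously in the limiting procedure to yield a single $H^s$ observability inequality valid for every $T>0$.
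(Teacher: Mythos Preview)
The paper does not prove Theorem~A: it is quoted verbatim from Linares--Ortega \cite{LO} as background (see the sentence preceding the statement and the attribution ``\cite{LO}'' in the theorem header), so there is no proof in the present paper to compare against.

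As a standalone argument, your outline is the standard HUM route for linear dispersive equations on $\T$ and is essentially what \cite{LO} does: reduction to an observability inequality for the free evolution, Ingham's lemma exploiting the gap $|\omega_k-\omega_j|\ge 2$ for the phases $\omega_k=k|k|$, a high/low frequency splitting to reach arbitrarily small $T$, and a compactness--uniqueness step to absorb the finite-rank correction in $\G$ and the commutators $[\partial_x^s,a]$. Your computation of the gap and the Ingham threshold $T>\pi$ are correct, and the pointwise-in-$x$ application of Ingham followed by integration against $a^2$ is the right mechanism for producing the lower bound on $\int_0^T\|av\|_{L^2}^2\,dt$.

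The one place where your sketch is genuinely incomplete is the unique continuation input to the compactness--uniqueness step. From $\G v=0$ you only get that $v(\cdot,t)$ is \emph{constant} on $\omega$, not that $v$ vanishes there; you therefore need a UCP of the form ``$v_t+\mathcal H v_{xx}=0$ and $\partial_x v=0$ on $\omega\times(0,T)$ imply $v$ is constant,'' after which the zero-mean constraint forces $v\equiv 0$. For the linear equation this can be obtained directly from the Fourier representation (the frequencies $\omega_k$ are pairwise distinct, so a solution with $\partial_x v\equiv 0$ on an open set for all $t$ in an interval must have each mode $c_k e^{ikx}$ constant on $\omega$, hence $c_k=0$ for $k\ne 0$), which avoids any delicate nonlocal Holmgren argument. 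With that point made explicit, your proof goes through.
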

 
In order to stabilize \eqref{BOlin}, they employed a simple control law
\[
h(x,t)=-\G^* u(x,t)=-\G u(x,t).
\]
The resulting closed-loop system reads\footnote{Actually, it is easy seen that $\G$ is  self-adjoint (i.e. ${\G}^* =\G$). However, we shall
keep the usual notation for the feedback $h = -{\G}^* u$  throughout .}
\[
u_t+{\mathcal H} u_{xx}=-\G\G^{*}u.
\]
\begin{TB}[\cite{LO}]\label{LOstab}
Let $s\ge 0$ be given. Then there exist some constants $C>0$ and $\lambda >0$ such that for any $u_0\in H^s(\T )$, the solution of
\[
u_t+{\mathcal H} u_{xx}=-\G\G^{*}u,\qquad u(x,0)=u_0(x),
\]
satisfies
\[
\|u(\cdot,t) - [u_0]\|  _{H^s(\T ) }  \le Ce^{-\lambda t} \|u_0-[u_0]\| _{H^s(\T ) } \qquad \forall t\ge 0.
\]
\end{TB}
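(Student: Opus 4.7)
My plan is to proceed in three stages: reduction to the zero-mean subspace, exponential $L^2$ decay via an observability inequality dual to Theorem~A, and extension to $H^s$ by a commutator/bootstrap argument.

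First, since $\int_\T\mathcal{H}v_{xx}\,dx=0$ by Parseval and $\int_\T\G v\,dx=0$ for every $v$ (by the normalisation $2\pi[a]=1$), the mean of the solution is preserved by the closed-loop system; moreover $\G$ (and hence $\G\G^*$) annihilates constants. Consequently $v:=u-[u_0]$ solves the same equation with zero-mean initial data, and it suffices to prove the decay of $v$ in $H^s$ on the subspace $\{[\,\cdot\,]=0\}$. On that subspace, skew-adjointness of $\mathcal{H}\partial_x^2$ gives the exact dissipation identity
\[
\|v(T)\|_{L^2}^2=\|v_0\|_{L^2}^2-2\int_0^T\|\G^* v(t)\|_{L^2}^2\,dt,
\]
so exponential $L^2$ decay reduces to the observability inequality $\|v_0\|_{L^2}^2\le C_T\int_0^T\|\G^* v(t)\|_{L^2}^2\,dt$. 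For solutions of the free equation $\psi_t+\mathcal{H}\psi_{xx}=0$, this inequality is precisely dual (through the HUM method) to the $L^2$ exact controllability provided by Theorem~A at $s=0$. The transfer to the damped equation is a standard perturbation argument: write $v=U(t)v_0+r$, with $U$ the free unitary group and $r(0)=0$ solving the damped equation with source $-\G\G^* v$; bound $r$ in $L^\infty(0,T;L^2)$ by the right-hand side of the observability, and absorb it on a sufficiently short interval. The resulting estimate $\|v(T)\|_{L^2}^2\le\gamma\|v_0\|_{L^2}^2$ with $\gamma<1$, iterated on intervals $[kT,(k+1)T]$, yields $\|v(t)\|_{L^2}\le Ce^{-\lambda t}\|v_0\|_{L^2}$.

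For the extension to $s>0$, I would apply $\langle D\rangle^s$ to the equation and use that $\mathcal{H}\partial_x^2$ commutes with $\langle D\rangle^s$; the function $w=\langle D\rangle^s v$ then solves $w_t+\mathcal{H}w_{xx}+\G\G^* w=-[\langle D\rangle^s,\G\G^*]v$. Since $\G\G^*$ reduces essentially to smooth multiplication by $a$ composed with a rank-one projection, the commutator is a pseudodifferential operator of order $s-1$ and the source is bounded in $L^2$ by $C\|v\|_{H^{s-1}}$. Decomposing $w=\tilde w+R$ with $\tilde w$ solving the homogeneous damped equation (so that the previous step applies, since $[\tilde w(0)]=0$) and controlling $R$ by Duhamel together with the interpolation $\|v\|_{H^{s-1}}\le\|v\|_{L^2}^{\theta}\|v\|_{H^s}^{1-\theta}$ and the $L^2$ decay already in hand, a bootstrap argument delivers the desired $H^s$ exponential decay.

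The single deep ingredient is the $L^2$ observability inequality for the free equation, which encodes the geometry of $\omega$ and is produced via HUM from Theorem~A at $s=0$; this is the place where one must use the assumption that the active set of $a$ is an open set $\omega$. Once the observability is granted, the reduction, perturbation, commutator and bootstrap steps are standard semigroup manipulations and present no further conceptual difficulty.
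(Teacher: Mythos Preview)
This statement (Theorem~B) is not proved in the present paper: it is quoted as a known result from \cite{LO}, and serves only as background for the nonlinear problems treated here. There is therefore no proof in this paper to compare your proposal against.

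That said, your outline is a reasonable and essentially standard route to such a linear stabilization result, and is very much in the spirit of how \cite{LO} proceeds. The reduction to the zero-mean subspace is correct (you use exactly the two properties of $\G$ that matter: it annihilates constants and has range in zero-mean functions). The passage from the exact controllability of Theorem~A to an $L^2$ observability inequality for the free group via HUM is the expected move, and your perturbation argument to transfer observability from the free to the damped flow is the usual one; just be aware that the free observability constant $C_T$ blows up as $T\to 0$, so one fixes an arbitrary $T>0$ for which Theorem~A applies and checks that the Duhamel remainder can still be absorbed (your energy estimate on $r$ gives a bound by $\int_0^T\|\G^*\psi\|^2$, which is enough).

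The $H^s$ step is the sketchiest part of your proposal. The commutator $[\langle D\rangle^s,\G\G^*]$ is indeed of order $s-1$ (since $\G$ is built from multiplication by the smooth function $a$ plus a finite-rank smoothing piece), and your decomposition $w=\tilde w+R$ is sound. However, the interpolation--bootstrap you indicate does not quite close in one shot: the source in $R$ is controlled by $\|v\|_{H^{s-1}}$, and to turn this into exponential $H^s$ decay one typically argues by induction on integer $s$ (using the $H^{s-1}$ decay already established) and then interpolates for non-integer $s$. This is a routine refinement, but you should spell it out rather than invoke a single interpolation inequality.
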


One of the main difficulties to extend the linear results to the nonlinear ones comes from the  fact that one cannot use the contraction principle in its usual form 
to establish the local well-posedness of BO in $H^s_0(\T )=\{u\in H^s(\mathbb{T}),\;\;[u]=0\}$ for $s\ge 0$ (see \cite{MoSaTz}).  The method of proof in \cite{molinetBOT} and \cite{MolinetPilodBO} used strongly Tao's gauge transform, and it is not clear whether this approach
can be followed when an additional  control term is present in the equation. Nevertheless, Linares and Rosier  in \cite{LinaresRosierBO} obtained the first results 
regarding stabilization and control for the BO equation.  For completeness we will briefly describe these results.

In \cite{LinaresRosierBO}, it was considered the following feedback law
$$
h=-D(\G u),
$$
where $\widehat{D u}(\xi)=|\xi|\, \widehat{u}(\xi)$, to stabilize the BO equation.  Thus scaling in \eqref{BOf} by $u$ gives (at least formally)
\begin{equation}
\label{id}
\frac{1}{2} \|u(T)\|_{L^2(\T )}^2  + \int_0^T \|D^{\frac{1}{2}}(\G u)\|_{L^2(\T)}^2 dt
=\frac{1}{2} \|u_0\|_{L^2(\T )}^2,
\end{equation}
which suggests that the energy is dissipated over time. On the other hand,
\eqref{id} reveals a smoothing effect, at least in the region $\{ a>0\}$.
Using a {\em propagation of regularity property} in the same vein as  in \cite{DGL,laurent, laurent2,LRZ},
it was proved that the smoothing effect holds everywhere, i.e.
\begin{equation}
\label{sm}
\|u\|_{L^2(0,T;H^{\frac{1}{2}}(\T ))}\le C(T,\|u_0\|).
\end{equation}
Using this smoothing effect and the classical compactness/uniqueness argument, it was shown that
the corresponding closed-loop equation is semi-globally  exponentially stable. More precisely,
\begin{TC}[\cite{LinaresRosierBO}]\label{main1}
Let $R>0$ be given. Then there exist some constants
$C=C(R)$ and $\lambda =\lambda (R)$ such that  for any $u_0\in H^0_0(\T )$ with $\|u_0\|\le R$,
the weak solutions in the sense of vanishing viscosity of
\begin{equation}
\label{weak-dis}
u_t + {\mathcal H} u_{xx} - uu_x =-\G (D(\G u)),\qquad u(x,0)=u_0(x),
\end{equation}
satisfy
\begin{equation*}
\|u(t)\|\le Ce^{-\lambda t}\|u_0\| \qquad \forall t\ge 0.
\end{equation*}
\end{TC}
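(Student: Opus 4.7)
\medskip

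\noindent\textbf{Proof plan.} The goal is to upgrade the dissipation identity \eqref{id} to an observability inequality of the form
\begin{equation}\label{eq:obs-plan}
\|u_0\|_{L^2}^2 \le C(R,T) \int_0^T \|D^{1/2}(\G u)\|_{L^2(\T )}^2\, dt
\end{equation}
valid for any weak solution with $\|u_0\|\le R$. Once \eqref{eq:obs-plan} is established, combining it with \eqref{id} gives $\|u(T)\|^2 \le \gamma \|u_0\|^2$ for some $\gamma=\gamma(R,T)<1$; since the $L^2$-norm is nonincreasing along the flow, this bound propagates on every interval $[nT,(n+1)T]$ and yields the exponential decay with rate $\lambda = -\log(\gamma)/(2T)$.

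To prove \eqref{eq:obs-plan} I would argue by contradiction, in the now standard compactness--uniqueness scheme used in \cite{DGL,laurent,LRZ}. Suppose the inequality fails: there exist a sequence of initial data $u_0^n$ with $\|u_0^n\| \le R$, $\alpha_n := \|u_0^n\|>0$, and corresponding weak solutions $u^n$ of \eqref{weak-dis} such that
\[
\int_0^T \|D^{1/2}(\G u^n)\|_{L^2(\T )}^2 \, dt \;\le\; \frac{1}{n}\, \alpha_n^2 .
\]
Normalize by setting $v^n := u^n / \alpha_n$. The smoothing estimate \eqref{sm} (combined with the normalization) together with the equation gives uniform bounds for $v^n$ in $L^2(0,T;H^{1/2}(\T ))\cap C([0,T];L^2(\T ))$ and for $\partial_t v^n$ in a space of negative index, so Aubin--Lions provides a subsequence converging strongly in $L^2(0,T;L^2(\T ))$ to some limit $v$. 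The strong convergence is enough to pass to the limit in the nonlinear term $\alpha_n v^n \partial_x v^n$, which actually tends to $0$ because of the prefactor $\alpha_n\le R$ but is bounded. Two cases now arise depending on whether $\alpha_n \to \alpha_\infty \in (0,R]$ or $\alpha_n \to 0$. In both cases the limit $v$ solves a BO-type equation on $\T$ (the nonlinear BO when $\alpha_\infty>0$, the linear BO when $\alpha_\infty=0$), satisfies $\|v(0)\|=1$, and $D^{1/2}(\G v) \equiv 0$ on $(0,T)\times \T$, which forces $v\equiv [v_0]$ on $\omega$.

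The decisive step is then a unique continuation property: any solution of the (linear or nonlinear) BO equation on $\T$ that is constant in $x$ on $\omega \times (0,T)$ must be constant in $x$ everywhere. For the linear case this follows from a Fourier/Holmgren-type argument using the explicit spectrum of $\mathcal H \partial_x^2$ (this is the mechanism already behind Theorem A). For the nonlinear case, one combines the linear unique continuation with the propagation of regularity in the spirit of \cite{DGL,laurent2}: the equation being satisfied in $\omega$ with $v$ $x$-independent there forces higher regularity to propagate, and one reduces to the linear situation or invokes a Carleman-type estimate for the BO operator. Having $v\equiv[v_0]$ together with the conservation of the mean and $\|v(0)\|=1$ contradicts the fact that $v(0)$ has zero mean (since $u_0^n\in H^0_0$), concluding \eqref{eq:obs-plan}.

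I expect the unique continuation step to be the main obstacle. The delicate point is that we are working at the $L^2$ regularity where the Cauchy problem is solved only via gauge transform, so classical PDE arguments (Carleman estimates, Holmgren) are not directly applicable to the nonlinear equation; one must either exploit the propagation of regularity to upgrade $v$ to a smoother solution before invoking unique continuation, or work directly on the gauged equation. The remaining technical difficulty is ensuring that the vanishing viscosity weak solutions of \eqref{weak-dis} really do satisfy the identity \eqref{id} and the smoothing bound \eqref{sm} with constants uniform in the viscosity parameter, so that the limiting arguments go through; this should follow from the a priori estimates used to construct the weak solutions in the first place.
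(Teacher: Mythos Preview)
Theorem~C is not proved in this paper: it is quoted verbatim from \cite{LinaresRosierBO} as background, so there is no proof here to compare your proposal against. The introduction merely records that \cite{LinaresRosierBO} obtained it ``using this smoothing effect and the classical compactness/uniqueness argument,'' and your plan is a faithful reconstruction of exactly that scheme. If you want a point of comparison inside the present paper, look at Section~8, where the authors prove the analogous Theorem~\ref{stabilization} for the weaker damping $-\G\G^*u$: the architecture (energy identity $\Rightarrow$ observability by contradiction $\Rightarrow$ compactness $\Rightarrow$ unique continuation) is the same as yours, though the technical execution differs because without the $D$ in the feedback there is no smoothing effect \eqref{sm}, and the compactness/propagation step has to go through the gauge transform $w$ rather than directly on $u$.

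Two small points on your write-up. First, the sentence ``which actually tends to $0$ because of the prefactor $\alpha_n\le R$ but is bounded'' is muddled; $\alpha_n\le R$ gives boundedness, not vanishing, and the correct treatment is precisely the two-case split you perform immediately after. Second, your reading of $D^{1/2}(\G v)=0$ is slightly off: it gives $\G v$ constant in $x$, hence (since $\G v$ has zero $x$-mean by construction) $\G v\equiv 0$, which forces $v(\cdot,t)=c(t)$ on $\omega$ for some time-dependent constant --- not $v\equiv[v_0]$ straight away. The reduction to a genuine unique continuation statement then proceeds as in Proposition~2.8 of \cite{LinaresRosierBO} (cf.\ also the proof of Lemma~\ref{lemmacvceweak} here).
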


Using again the smoothing effect \eqref{sm}, it was possible to extend (at least locally) the exponential stability from $H^0_0(\T )$ to $H^s_0(\T )$ for $s>1/2$.
\begin{TD}[\cite{LinaresRosierBO}]\label{main2}
Let $s\in (\frac{1}{2}, 2]$. Then there exists $\rho >0$ such that for any data $u_0\in H^s_0(\T )$  with $\|u_0\|_{H^s(\T )}< \rho$, there exists for all $T>0$ a unique
solution $u(t)$ of \eqref{weak-dis} in the class $C([0,T],H^s_0(\T ))\cap L^2(0,T, H^{s+\frac{1}{2}}_0(\T ))$. Furthermore, there exist some constants $C>0$ and
$\lambda >0$
such that
\begin{equation*}
\|u(t)\| _s \le Ce^{-\lambda t} \|u_0\|_s\qquad \forall t\ge 0.
\end{equation*}
\end{TD}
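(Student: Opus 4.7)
The plan is to first establish local well-posedness in $C([0,T],H^s_0(\T))\cap L^2(0,T,H^{s+\frac{1}{2}}_0(\T))$ for $s\in(1/2,2]$ and then use a smallness argument to promote it to a global-in-time exponentially decaying solution. Since $H^s(\T)$ is a Banach algebra for $s>1/2$, the nonlinear term $uu_x\in H^{s-1}(\T)$ can be bounded by $\|u\|_{H^s}^2$, which removes the main difficulty that forces the gauge transform approach at the $L^2$ level. Combined with the dissipative operator $-\G D\G$ of order $1$ localized on $\omega$, I would run a standard parabolic-type fixed point scheme in the Kato space $X_T=C([0,T],H^s_0)\cap L^2(0,T,H^{s+\frac{1}{2}}_0)$: the damping provides the gain of half a derivative on $\omega$ at the linear level, and the \emph{propagation of regularity} mechanism recalled in \eqref{sm} extends it everywhere.

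Next I would derive an $H^s$ energy identity by applying $D^s$ to the equation and pairing with $D^s u$:
\begin{equation*}
\frac{1}{2}\frac{d}{dt}\|D^s u\|_{L^2}^2+\|D^{\frac12}\G D^s u\|_{L^2}^2=\langle [D^s,\G D \G]u,D^s u\rangle+\langle D^s(uu_x),D^s u\rangle.
\end{equation*}
The commutator is of lower order and absorbable via interpolation with the dissipation term. For the nonlinear term, the algebra property combined with integration by parts yields a bound of the form $C\|u\|_{H^s}\|u\|_{H^s}\|u\|_{H^{s+\frac12}}$, i.e.\ essentially cubic in $H^s$ times one factor in the stronger smoothing norm, which can be absorbed into the dissipation when $\|u\|_{H^s}$ is small.

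To obtain the exponential decay I would combine this energy estimate with the linear exponential decay for the $H^s$ norm of the closed-loop \emph{linear} equation $u_t+\H u_{xx}=-\G D\G u$. The latter is obtained as in \cite{LinaresRosierBO,LO} by a compactness-uniqueness argument: the observability estimate for the adjoint problem is deduced from a unique continuation property via Holmgren's theorem, and the known $H^s$ analogue of \eqref{id}, namely
\begin{equation*}
\|u(T)\|_{H^s}^2+2\int_0^T\|D^{\frac12}\G u\|_{H^s}^2\,dt=\|u_0\|_{H^s}^2,
\end{equation*}
then yields linear exponential stability in $H^s_0$. Duhamel's formula written against this linear semigroup, together with the nonlinear bound above, gives a contraction estimate in a weighted space $e^{\lambda' t}X_\infty$ for $0<\lambda'<\lambda$, provided $\|u_0\|_{H^s}<\rho$ with $\rho$ small enough.

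The main obstacle I expect is the energy estimate at the $H^s$ level: controlling $\langle D^s(uu_x),D^s u\rangle$ by the dissipation rather than by $\|u\|_{H^{s+1}}$ requires a sharp commutator/integration-by-parts trick to put the extra half-derivative on a factor $\G u$ (using $\G$ self-adjoint and that far from $\omega$ the problem is dispersive hence conservative), so that only the smoothing norm $\|u\|_{H^{s+1/2}}$ appears. This is the technical core that makes the bootstrap close; once it is done the global existence and exponential decay follow by the standard continuation and contraction argument.
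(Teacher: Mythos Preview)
This statement is not proved in the present paper. Theorem~D is quoted verbatim from \cite{LinaresRosierBO} as part of the survey of prior results in the introduction; the current paper gives no proof of it and moves on to its own results (Theorems~\ref{control}, \ref{stabilization}, \ref{sourcelwp}) which concern the $L^2$ setting with the weaker damping $-\G\G^*u$ rather than $-\G D\G u$. There is therefore no proof here against which to compare your proposal.

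That said, your outline is broadly in the spirit of how such a result is obtained in \cite{LinaresRosierBO}: the smoothing effect \eqref{sm} coming from the order-one damping $-\G D\G$ is the mechanism that places solutions in $L^2(0,T;H^{s+1/2}_0)$, and for $s>1/2$ the algebra property of $H^s$ lets one close a contraction without the gauge transform. One caution: your displayed ``$H^s$ energy identity'' with a clean dissipation term $\|D^{1/2}\G D^s u\|_{L^2}^2$ is not literally correct, since $D^s$ does not commute with $\G$; the actual dissipative term is $\langle \G D\G u, D^{2s}u\rangle$, and extracting a coercive $H^{s+1/2}$ contribution from it requires commutator estimates of the type used in the propagation-of-regularity argument, not merely ``lower order and absorbable.'' Likewise, the bound you write for the nonlinear term, $C\|u\|_{H^s}^2\|u\|_{H^{s+1/2}}$, does not follow from the algebra property alone (that gives $\|uu_x\|_{H^{s-1}}\lesssim\|u\|_{H^s}^2$, which pairs against $H^{s+1}$, not $H^{s+1/2}$); one needs a Kato--Ponce commutator estimate after integration by parts to shave the extra half-derivative. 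These are exactly the points you flag as the ``main obstacle,'' and they are indeed where the work lies.
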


Finally, including the same feedback law $ h=  -D(\G u)$ in the control input to obtain a smoothing effect, it was derived
an exact controllability result for the full equation as well. More precisely,
\begin{TE}[\cite{LinaresRosierBO}]\label{main3}
Let $s\in (\frac{1}{2}, 2] $ and $T>0$ be given. Then there exists $\delta>0$ such that for any $u_0,u_1\in H^s_0(\T)$ satisfying
\begin{equation*}
\|u_0\|_{H^s(\T)} \le \delta,\quad \|u_1\|_{H^s(\T )}\le \delta
\end{equation*}
one can find a control input $h\in L^2(0,T,H^{s-\frac{1}{2}} (\T))$ such that the system \eqref{BOG} admits a solution
$u\in C([0,T],H^s_0(\T))\cap L^2(0,T,H^{s+\frac{1}{2}}_0(\T ) )$ satisfying
\begin{equation*}
u(x,0)=u_0(x),\quad u(x,T)=u_1(x).
\end{equation*}
\end{TE}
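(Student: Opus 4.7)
The plan is to combine the exponential stability of the damped closed-loop system with an exact controllability result for its linearization, in the spirit of Russell--Zhang. Writing the control as $h = -D(\G u) + \tilde h$, the first term generates the smoothing effect underlying \eqref{sm}, and $\tilde h$ becomes the new control used to reach the target $u_1$.

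First I would establish exact controllability in $H^s_0(\T)$ for the damped linearized equation
\begin{equation*}
v_t + {\mathcal H} v_{xx} + \G D(\G v) = \G \tilde h + f, \qquad v(\cdot,0)=v_0.
\end{equation*}
Via HUM duality this reduces to an observability inequality for the adjoint semigroup, which follows from its exponential decay (the linear analog of Theorem C, proved by energy identity plus unique continuation) combined with the usual compactness/uniqueness argument to absorb lower-order terms. The output is a bounded linear operator
\begin{equation*}
\Lambda : H^s_0(\T) \times H^s_0(\T) \times L^1(0,T; H^{s-\frac{1}{2}}(\T)) \longrightarrow L^2(0,T; H^{s-\frac{1}{2}}(\T)),
\end{equation*}
$(v_0,v_1,f) \mapsto \tilde h$, such that the corresponding solution $v$ satisfies $v(\cdot,T)=v_1$.

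Next, on $X^s_T := C([0,T]; H^s_0(\T)) \cap L^2(0,T; H^{s+\frac{1}{2}}_0(\T))$, I would set up the fixed-point map $\Psi(w) := u$, where $u$ solves the damped linear equation with source $w w_x$ and control $\tilde h_w := \Lambda(u_0,u_1,w w_x)$; a fixed point of $\Psi$ produces a solution of the full nonlinear controllability problem with $h = \tilde h_u - D(\G u)$. The ingredients needed are (i) linear well-posedness and smoothing for the damped Cauchy problem in $X^s_T$, obtained from the semigroup plus a standard Duhamel argument using the dissipation $\|D^{1/2}(\G v)\|_{L^2}^2$; (ii) boundedness of $\Lambda$; and (iii) the bilinear bound
\begin{equation*}
\|w w_x\|_{L^1(0,T; H^{s-\frac{1}{2}})} \lesssim T^{1/2} \|w\|_{X^s_T}^2,
\end{equation*}
available since $s > 1/2$ makes $H^s(\T)$ an algebra and one can split the half-derivative between $w$ and $w_x$ using the $L^2_tH^{s+1/2}_x$ bound.

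The hard part will be showing that $\Psi$ is a strict contraction on a small ball $B_\delta \subset X^s_T$. Under the hypothesis $\|u_0\|_{H^s},\|u_1\|_{H^s}\le\delta$, one expects $\|\Psi(w)\|_{X^s_T} \le C_1\delta + C_2\|w\|_{X^s_T}^2$ and $\|\Psi(w_1)-\Psi(w_2)\|_{X^s_T} \le C_2(\|w_1\|_{X^s_T}+\|w_2\|_{X^s_T})\|w_1-w_2\|_{X^s_T}$, so both the stability and contraction constants are small for $\delta$ small. The delicate step is carrying the bilinear estimate uniformly through $\Lambda$ and the damped linear flow, which is precisely where the $\tfrac12$-derivative gain from the damping term is used to absorb the derivative in $w w_x$. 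Banach's fixed-point theorem then yields $u\in X^s_T$ and the desired control $h \in L^2(0,T; H^{s-\frac{1}{2}}(\T))$ with $u(0)=u_0$ and $u(T)=u_1$.
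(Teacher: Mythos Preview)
Theorem~E is not proved in this paper: it is quoted verbatim from \cite{LinaresRosierBO} as part of the survey of prior work in the introduction, and no proof appears anywhere in the text. There is therefore nothing in the paper to compare your proposal against.

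That said, your sketch is faithful to the one-sentence description the authors give of the method in \cite{LinaresRosierBO}: the control is taken of the form $h=-D(\G u)+\tilde h$, the first piece producing the $H^{1/2}$ smoothing \eqref{sm}, and $\tilde h$ then handled by a perturbative fixed-point argument from the linear controllability. Your outline of the bilinear estimate $\|ww_x\|_{L^1_T H^{s-1/2}}\lesssim T^{1/2}\|w\|_{X^s_T}^2$ for $s>1/2$ and of the contraction on a small ball is the expected mechanism. The one point where you should be more careful is step~(i): the assertion that the damped linear flow maps an $L^1_T H^{s-1/2}$ source into $L^2_T H^{s+1/2}$ globally on $\T$ is exactly the propagation-of-regularity result alluded to before \eqref{sm}, and it does not follow from the energy identity alone (which only yields the gain on $\omega$). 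You would need to invoke that propagation argument explicitly, as it is the non-trivial input from \cite{LinaresRosierBO}.
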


Our main purpose here is to obtain the control and stabilization result in the largest space where local well-posedness is known for
the BO equation, that is,  for initial data in $L^2(\mathbb T)$.  As we commented above the main difficulty to treat the problem for data
in $L^2$ was to use the gauge transform $w=-\frac{i}{2}P_+(ue^{-\frac{i}{2}F})$ introduced in \cite{tao}.  We first consider the Cauchy problem  associated to the BO 
equation with a forcing term,
that is,
\begin{equation}\label{BOforcing}
\begin{cases}
u_t + {\mathcal H} u_{xx} -uu_x=g, \quad x\in\T,\;t>0,\\
u(x,0)=u_0(x)
\end{cases}
\end{equation}
for data in $L^2(\mathbb T)$ and $g\in L^2([0,T],L^2(\T))$. Then, we establish the  well-posedness theory for \eqref{BOforcing}.
The main tool used is the bilinear estimates approach
employed by Molinet and Pilod \cite{MolinetPilodBO} to show the local well-posedness of the BO equation in both cases in
the real line and in the periodic setting for data in $L^2(\mathbb T)$ (see \cite{IoKe} for the first proof of this result in $L^2(\R)$).  The argument in
 \cite{MolinetPilodBO}  takes advantage  of uniform estimates for small data  for which  the gauge transform is well defined and a
 scaled argument (subcritical) to consider any size data.  In our case, we apply some Sobolev estimates  which avoid the use
 of uniform estimates and we apply directly the argument in \cite{MolinetPilodBO}.  Then we have to analyze the gauge transform
 for large frequency data,  for which this transformation has a bad behavior. Doing so we extend  previous estimates  to overcome
 this difficulty. For instance we refine the bilinear estimates and obtain an extra time factor  (see Lemma \ref{bilinestim}) used  to
 deal with large data as well as  intermediate estimates to treat the same situation (see the Appendix).

Next we describe the main results in  this paper.


We begin with the exact controllability result. More precisely,

\begin{theorem}\label{control}\hskip10pt
\begin{itemize}
\item[(i)]  (Small data) For any $T>0$, there exists some $\delta>0$   such that for any $u_0, u_1\in L^2(\T)$ with
\begin{equation*}
\|u_0\|\le \delta,\;\; \|u_1\|\le \delta \hskip10pt\text{and}\hskip10pt [u_0]=[u_1]
\end{equation*}
one can find a control input $h\in L^2([0,T],L^2(\T))$ such that the solution $u$ of the system
\begin{equation}\label{BOnonlin}
\begin{cases}
u_t + {\mathcal H} u_{xx} -uu_x=\G h,\\
u(x,0)=u_0(x)
\end{cases}
\end{equation}
satisfies $u(x,T)=u_1(x)$ on $\T$.\\
\item[(ii)] (Large data) For any $R>0$, there exists a  positive $T=T(R)$ such that the above property holds for any $u_0, u_1\in L^2(\T)$   with
\begin{equation*}
\|u_0\|\le R,\;\; \|u_1\|\le R \hskip10pt\text{and}\hskip10pt [u_0]=[u_1].
\end{equation*}
\end{itemize}
\end{theorem}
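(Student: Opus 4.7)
My plan is to reduce both parts to the forced Cauchy problem \eqref{BOforcing}, whose $L^2$-well-posedness with source in $L^2([0,T];L^2(\T))$ is established earlier in the paper. Part (i) is obtained by a HUM-type nonlinear fixed point: I run the linear exact-control operator from Theorem~A against a Duhamel correction that absorbs the nonlinearity. Part (ii) follows by concatenating a damped trajectory, the small-data control of (i), and a time-reversed damped trajectory.

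\textbf{Part (i): small data.} Theorem~A at $s=0$ supplies a bounded linear operator
\[
\mathcal{L}_T : \bigl\{(a,b) \in L^2(\T)^2 : [a]=[b]\bigr\} \longrightarrow L^2([0,T];L^2(\T)),
\]
such that the linear equation $v_t + \mathcal{H} v_{xx} = \mathcal{G}(\mathcal{L}_T(a,b))$ with $v(0)=a$ satisfies $v(T)=b$. Set $S(t) := e^{-t\mathcal{H}\partial_x^2}$. For $u_0, u_1 \in L^2(\T)$ with $[u_0]=[u_1]$, I define on a ball of radius $r$ in the resolution space $X_T$ of the paper's forced well-posedness theorem the map
\[
\Gamma(w)(t) := S(t) u_0 + \int_0^t S(t-s)\bigl( w(s) w_x(s) + \mathcal{G} h_w(s) \bigr) ds, \quad h_w := \mathcal{L}_T\Bigl( u_0,\, u_1 - \int_0^T S(T-s) (w w_x)(s)\, ds \Bigr).
\]
The mean-matching condition needed to apply $\mathcal{L}_T$ follows from $[u_0]=[u_1]$ together with $\int_\T w w_x\, dx=0$ and the preservation of mean by $S(t)$. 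By construction any fixed point of $\Gamma$ solves \eqref{BOnonlin} and hits $u_1$ at time $T$. The refined bilinear estimate of Lemma~\ref{bilinestim} gives $\|\int_0^t S(t-s)(w w_x)(s)\, ds\|_{X_T} \lesssim \|w\|_{X_T}^2$ together with the matching Lipschitz bound, so that for $\delta = \delta(T)$ small enough, $\Gamma$ preserves and contracts the ball of radius $r \sim \delta$; its unique fixed point yields the desired $h = h_w$.

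\textbf{Part (ii): large data, and main obstacle.} Set $c := [u_0]=[u_1]$ and write $u = c+v$; the mean-zero part $v$ satisfies a BO-type equation with an extra skew-adjoint transport term $-c\,v_x$ that affects neither Theorem~A nor the bilinear analysis, so (i) applies verbatim to $v$. I would use the $L^2$-stabilization theorem of the paper (closed-loop feedback $h=-\mathcal{G} v$, giving $\|v(t)\|\le C_R e^{-\lambda_R t}\|v(0)\|$ on any ball of radius $R$) to choose $T_1=T_1(R,\delta)$ such that the damped forward trajectory from $v_0:=u_0-c$ reaches $L^2$-norm $\le \delta/2$ at time $T_1$. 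Exploiting the symmetry $u(x,t)\mapsto u(-x,-t)$, which preserves the BO equation, I do the same backward from $v_1:=u_1-c$ on a window of length $T_1$. A middle window of length $T_0 = T_0(\delta)$ from (i) connects the two small mean-zero states; restoring $u = v+c$ yields the statement with $T(R) := 2T_1(R)+T_0$. The decisive step is the fixed point of (i): the BO nonlinearity resists the standard contraction principle at $L^2$-regularity, forcing the use of Tao's gauge transform and the Molinet--Pilod bilinear machinery, and here the gauge is ill-behaved for high-frequency data, which is precisely why the refined estimate of Lemma~\ref{bilinestim} (with its extra time factor) and the intermediate estimates in the appendix are needed to absorb the Duhamel correction in $X_T$ for an arbitrary control horizon $T$.
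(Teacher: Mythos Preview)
Your fixed-point scheme in part (i) has a genuine gap. You write
\[
\Big\|\int_0^t S(t-s)(w w_x)(s)\,ds\Big\|_{X_T}\lesssim \|w\|_{X_T}^2
\]
and attribute it to Lemma~\ref{bilinestim}, but that lemma does \emph{not} estimate the raw BO nonlinearity $uu_x$: it bounds $\partial_x P_+[W(P_-\partial_x u)]$, the nonlinearity in the equation satisfied by the \emph{gauge transform} $w=\partial_xP_+(e^{-\frac{i}{2}F})$. A direct Picard/Duhamel contraction on $uu_x$ at $L^2$ regularity is precisely what fails for BO (this is the Molinet--Saut--Tzvetkov obstruction \cite{MoSaTz}: the flow map is not $C^2$, so no such bilinear estimate can hold). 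Relatedly, your correction term $\int_0^T S(T-s)(ww_x)\,ds$ is only in $H^{-1}(\T)$ a priori, so it is not even an admissible argument for $\mathcal L_T$ at $s=0$. The paper stresses this point explicitly (``absence of a direct Duhamel formulation'').

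What the paper does instead is to run the fixed point on the \emph{HUM datum} $h_0\in L^2(\T)$, not on the trajectory. For each $h_0$ one solves the forced BO equation (via the gauge-based well-posedness of Theorem~\ref{sourcelwp}), and the entire gauge machinery of Sections~3--4 is used only to extract the second-order estimate $\|u-u_L\|_{L^\infty_tL^2_x}\lesssim \|u_0\|^2+\|g\|^2$ (estimate~\eqref{estimlinaru}) together with its Lipschitz counterpart~\eqref{estimLipfinkin}. Writing $Kh_0:=u(0)-u_L(0)$, these give $\|Kh_0\|\lesssim\|h_0\|^2$ and a Lipschitz bound with a small constant, so $h_0\mapsto -L^{-1}Kh_0+L^{-1}u_0$ contracts on a small ball. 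The bilinear Lemma~\ref{bilinestim} enters only inside this black box, never as a direct estimate on $uu_x$.

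Your outline for part (ii) is essentially the paper's (stabilize forward, control small data, stabilize backward via time reversal). One correction: after the change of variable removing the mean, the paper does not get an extra transport term $-c\,v_x$ in a fixed geometry; rather the substitution $\tilde u(x,t)=u(x-t[u_0],t)-[u_0]$ produces the same BO equation but with $\G$ replaced by the \emph{moving} operator $\G_\mu$ of~\eqref{operGnu}. This is why several remarks in Sections~7--8 track the modifications needed for $\G_\mu$.
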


\begin{remark} \label{rkchgevar} We will give the proof of the theorem assuming that $[u_0]=[u_1]=0$. The general case can be done
after the change of variables $\widetilde{u}(x,t)=u(x-t[u_0],t)-[u_0]$ is made in the equation in \eqref{BOnonlin}. The
results in Theorem A remain valid for the new equation in $\widetilde{u}$. 
\end{remark}

The argument of proof is as follows. The control result for large data (ii) will be a combination of a stabilization result presented below and of the result (i) for small data, as is usual in control theory (see for instance \cite{DLZ,DGL,laurent,laurent2,LRZ}). The local control result (ii) will be proved using a perturbation argument from the linear result \cite{LO}. The difficulty comes from the fact that we need some estimates at higher order for the nonlinear equation, which are quite complicated due to the absence of a direct Duhamel formulation. Indeed, we need to have an expansion for all the elements used for the well-posedness theory. These estimates will come from the Benjamin-Ono equation verified by $u$, from the equation verified by the gauge transform $w$ and from  the \lq\lq inversion" of the gauge transform. For example, we will need  some estimates of the form $2iw=P_+ u+\petito{\nor{(u,v)}{X}^2}$ and some Lipschitz type estimates of this fact.

In order to stabilize \eqref{BOnonlin}, we employ a simple control law
\begin{equation*}
h(x,t)=-\G^* u(x,t).
\end{equation*}
where $\G^*$ denotes the adjoint operator of $\G$.
Thus the resulting closed-loop system is
\begin{equation*}
u_t+{\mathcal H} u_{xx}+uu_x=-\G\G^{*}u.
\end{equation*}

\begin{theorem}\label{stabilization}
There exist some nondcreasing functions $c,\;\lambda: \R_{+}\to \R_{+}$ such that for any $u_0\in L^2(\T )$, the solution of
\begin{equation}\label{stabilization1}
\begin{cases}
u_t+{\mathcal H} u_{xx}-uu_x=-\G\G^{*}u,\\
u(x,0)=u_0(x)
\end{cases}
\end{equation}
satisfies
\begin{equation*}
\|u(\cdot,t) -[u_0]\|  _{L^2(\T ) }  \le Ce^{-\lambda t} \|u_0-[u_0]\| _{L^2(\T ) } \qquad \forall t\ge 0
\end{equation*}
where $\lambda=\lambda(\|u_0\|)$, $C=C(\|u_0\|)$.
\end{theorem}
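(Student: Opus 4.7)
The plan is to follow the classical Lyapunov / observability scheme for semiglobal stabilization, adapted to the $L^2$ framework that the paper develops for the forced Benjamin-Ono equation (\ref{BOforcing}) together with the refined bilinear estimates. By Remark \ref{rkchgevar} I would reduce at the outset to the zero-mean situation $[u_0]=0$, so that the solution remains in $L^2_0(\T)$ for all time. Taking $g=-\G\G^*u$ in the forced theory (note that $\G\G^*$ is a bounded self-adjoint operator on $L^2_0(\T)$), I would first use the well-posedness result for (\ref{BOforcing}) to obtain a unique global solution $u\in C(\R_+;L^2_0(\T))$ of (\ref{stabilization1}). Multiplying the equation by $u$, exploiting the skew-adjointness of $\mathcal{H}\partial_x^2$ and the identity $\int u^2 u_x=0$, yields the energy dissipation identity
\begin{equation*}
\|u(T)\|_{L^2}^2 + 2\int_0^T \|\G^*u(t)\|_{L^2}^2\,dt = \|u_0\|_{L^2}^2,
\end{equation*}
which in particular shows that $t\mapsto \|u(t)\|_{L^2}$ is non-increasing.

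The exponential decay would then follow from an observability inequality of the form
\begin{equation*}
\|u_0\|_{L^2}^2 \leq C(R,T)\int_0^T \|\G^*u(t)\|_{L^2}^2\,dt
\end{equation*}
for every solution with $\|u_0\|_{L^2}\leq R$, for $T=T(R)$ sufficiently large. Combined with the energy identity this gives $\|u(T)\|^2\leq \gamma(R)\|u_0\|^2$ with some $\gamma(R)<1$; since the $L^2$ norm is non-increasing, the same estimate propagates on each slab $[nT,(n+1)T]$, and iteration yields the advertised bound with $\lambda=\lambda(R)$ and $C=C(R)$.

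The core of the argument is to prove this observability inequality by the standard compactness/uniqueness scheme. Suppose it fails: there are $u_0^n\in L^2_0(\T)$ with $\alpha_n:=\|u_0^n\|\leq R$ and corresponding solutions $u^n$ of (\ref{stabilization1}) with $\alpha_n^{-2}\int_0^T\|\G^*u^n\|^2\,dt\to 0$. Setting $v^n=u^n/\alpha_n$ one has $\|v^n(0)\|=1$, and after extracting a subsequence, $\alpha_n\to\alpha\in[0,R]$ and $v^n\rightharpoonup v$ weakly in $L^\infty(0,T;L^2_0(\T))$. The crucial point is to upgrade this to strong convergence of $v^n$ in the Bourgain-type resolution space used in the well-posedness proof, via a propagation of compactness argument. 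This is exactly where the refined bilinear estimates (Lemma \ref{bilinestim}) and the intermediate estimates of the Appendix are needed: they allow one to trade the weak convergence in the linear piece against the gain of integrability in the nonlinear one, and they are compatible with the gauge transform $w=-\frac{i}{2}P_+(ue^{-iF/2})$ through the Lipschitz-type inversion $2iw=P_+u+o(\|(u,v)\|_X^2)$ sketched in the introduction. Passing to the limit yields a solution $v$ of $v_t+\mathcal{H}v_{xx}-\alpha vv_x=0$ satisfying $\G^*v\equiv 0$, hence $v\equiv 0$ on $\omega\times[0,T]$, while strong $L^2$ convergence of $v^n(0)$ gives $\|v(0)\|=1$.

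To reach a contradiction I would then invoke a unique continuation property: any solution of the Benjamin-Ono equation (or its linearization, if $\alpha=0$) in $L^2_0(\T)$ that vanishes on $\omega\times(0,T)$ must vanish identically. Using the propagation-of-regularity yielding (\ref{sm}) and the additional smoothing provided by the bilinear machinery, I would first upgrade $v$ to a smooth solution on $\omega$, and then apply a Carleman/holomorphic-extension unique continuation result in the spirit of the one used in \cite{LinaresRosierBO} to conclude $v\equiv 0$, contradicting $\|v(0)\|=1$. The main obstacle throughout is the absence of a direct Duhamel formulation at the $L^2$ level, forced by the gauge transform: both the propagation of compactness and the regularisation step have to be carried out simultaneously on $u$ and on its gauge transform $w$, which is why the refined bilinear estimates of this paper — giving an extra small factor in $T$ and the quantitative inversion of the gauge — are essential.
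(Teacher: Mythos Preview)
Your overall architecture (energy identity, reduction to an observability inequality, contradiction via compactness/uniqueness) matches the paper, but the heart of the contradiction argument diverges from what the paper actually does, and your version has a real gap.

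You normalize by $\alpha_n=\|u_0^n\|$, aim to pass to a limit solution $v$ of $v_t+\mathcal H v_{xx}-\alpha vv_x=0$ vanishing on $\omega$, and then conclude by a unique continuation property. Two problems. First, the smoothing estimate (\ref{sm}) you invoke to regularize $v$ before applying UCP was proved in \cite{LinaresRosierBO} for the \emph{stronger} feedback $-\G D\G u$; with the present feedback $-\G\G^*u$ there is no gain of half a derivative, so the limit $v$ stays in $L^2$ and no Carleman/holomorphic UCP for Benjamin--Ono is available at that regularity. Second, the normalization does not interact well with the gauge: $v^n=u^n/\alpha_n$ solves an equation with nonlinearity $\alpha_n v^n v^n_x$, so the gauge adapted to it is $\partial_x P_+\exp(-\tfrac{i\alpha_n}{2}\partial_x^{-1}v^n)$, which depends on $n$; the propagation-of-compactness machinery (Theorem \ref{thmpropagation3}) is written for the gauge of $u_n$, not for this rescaled object. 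Related to this, propagation of compactness only yields convergence in $L^2_{\rm loc}([0,T],L^2)$, not strong convergence of $v^n(0)$, so your claim $\|v(0)\|=1$ is unjustified.

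The paper sidesteps all of this by \emph{not} normalizing and by splitting the contradiction into two regimes. For the bounded sequence $u_n$ itself, Lemma \ref{lemmacvceweak} shows $u_n\rightharpoonup 0$ (the weak limit is identified as $0$ directly, using the structure of $\G$ on $\omega$ and the argument of \cite{LinaresRosierBO}, so no nonlinear limit equation ever needs to be written down). Then the information $a(x)u_n\to 0$ is transferred to the gauge $w_n$ (Lemma \ref{lmundonnewn}), the Schr\"odinger propagation Theorem \ref{thmpropagation3} is applied to $w_n$, and one returns to $u_n$ to get $u_n\to 0$ in $L^2_{\rm loc}([0,T],L^2)$ (Theorem \ref{lemmacvcestrong}). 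Picking a time $t_0$ where $u_n(t_0)\to 0$ and using the energy identity backwards gives $\|u_{0,n}\|\to 0$. Only then --- now that the data are small --- does the paper compare with the \emph{linear} damped equation via the second-order estimate of Lemma \ref{lemmaapproxlindamped} (this is where the refined bilinear estimates and the quantitative gauge inversion really enter), and conclude by the linear observability of \cite{LO} (Proposition \ref{lemmacontrad0}). In short: the paper replaces ``nonlinear UCP'' by ``strong convergence to zero $+$ reduction to the linear observability'', which is exactly what makes the argument go through at $L^2$ regularity.
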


\begin{remark} \label{rkchgevarstab}After the change of variables $\widetilde{u}(x,t)=u(x-t[u_0],t)-[u_0]$ described in Remark \ref{rkchgevar}, we are left with some solutions 
with $[\widetilde{u}]=0$ and with the same equation but with $\G$ replaced by $\G_{\mu}$ defined by
\begin{equation}\label{operGnu}
(\mathcal{G_{\mu}}h)(x,t):=a(x-\mu t)\big( h(x-\mu t,t)-\int_{\T} a(y)h(y,t)\, dy \big)
\end{equation}
with $\mu=[u_0]$.

We will give the proof of the theorem assuming that $\mu=[u_0]=0$ and detail in some remarks the necessary modifications when $\G$ is replaced by $\G_{\mu}$. 
\end{remark}

The proof of the stabilization theorem will be obtained by an observability estimate proved by the compactness-uniqueness method. The difference from the case of the nonlinear Schr\"odinger equation (see for instance \cite{DGL,laurent,laurent2}) or other nonlinear conservative equations (\cite{DLZ,LRZ,PVZ,P}) comes again from the use of the gauge transform. We need to propagate some information from the zone of damping to the whole space. Yet, the absence of Duhamel formulation prevents from doing that directly on the solution $u$. The idea is to transfer this information from the solution $u$ to the gauge transform $w$, next  to apply some propagation results to $w$ and finally to obtain the expected result by returning to the original solution $u$.

As it was already  mentioned, one of our main ingredients in our analysis is the study of  the well-posedness of the IVP
\begin{equation}\label{sourceivp}
\begin{cases}
\partial_t u+{\mathcal H}\partial_x^2 u- u\partial_xu=g, \quad x\in\mathbb{T}, \;t>0,\\
u(x,0)=u_0(x),
\end{cases}
\end{equation}
for data $u_0\in L^2(\T)$ and where $g\in L^2([0,T], L^2(\mathbb{T}))$.

More precisely,

\begin{theorem}\label{sourcelwp}
For any  $u_0\in L^2(\T)$, $g\in L^2([0,T],L^2(\T))$  and $T>0$, there exists a solution 
\begin{equation}\label{sourcelwp1}
u\in C([0,T], L^2(\T))\cap X_T^{-1,1}\cap L^4_TL^4(\T)
\end{equation}
of \eqref{sourceivp} such that
\begin{equation}\label{sourcelwp2}
w=\partial_xP_{+}(e^{-\frac{i}2\partial_x^{-1}\tilde{u}})\in X_2
\end{equation}
where 
\begin{equation*}
\tilde{u}=u(x-t[u_0], t)-[u_0] \text{\hskip10pt and\hskip10pt} \widehat{\partial_x^{-1}}:=\frac{1}{i\xi},\;\xi\in\Z^{*}.
\end{equation*}

The solution $u$ is unique in the class
\begin{equation*}
u\in L^{\infty}((0,T); L^2(\T))\cap L^4((0,T)\times \T) \hskip10pt and \hskip10pt w\in X^{0,1/2}_T.
\end{equation*}

Moreover, $u\in C([0,T];L^2(\T))$ and the flow map  data--solution: $(u_0,g)\to u$ is locally Lipschitz continuous from
$L^2(\T)$  into $C([0,T];L^2(\T))$.
\end{theorem}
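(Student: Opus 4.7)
The approach follows the strategy of Molinet and Pilod \cite{MolinetPilodBO} for the Cauchy problem in $L^2(\T)$, adapted to incorporate the forcing $g$. By Remark \ref{rkchgevar} applied to both $u$ and $g$, one may reduce to the case $[u_0]=0$ so that $\tilde u = u$. Setting $F=\partial_x^{-1}u$ and $w=\partial_x P_+(e^{-iF/2})$, a direct computation using \eqref{sourceivp} shows that $F$ satisfies $\partial_t F + {\mathcal H}\partial_x u - \tfrac12 u^2 = \partial_x^{-1}g$ (up to the mean of $g$), whence $w$ solves a Schr\"odinger-type equation of the form
$$(\partial_t - i\partial_x^2)w = -\partial_x P_+(w\, P_{-} u) + R(u,F,g),$$
in which the first term on the right is the bilinear expression already estimated in \cite{MolinetPilodBO} and the remainder $R$ contains the new forcing contribution $-\tfrac{i}{2}\partial_x P_+\bigl(e^{-iF/2}\,\partial_x^{-1}g\bigr)$ together with harmless lower-order terms.

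The plan is to set up a Picard iteration on the pair $(u,w)$ in the space
$$\bigl(L^\infty_T L^2 \cap L^4_T L^4 \cap X^{-1,1}_T\bigr)\;\times\; X^{0,1/2}_T,$$
the two entries being coupled through the gauge transform. The Schr\"odinger equation for $w$ is inverted by Duhamel, its bilinear nonlinearity being controlled in $X^{0,-1/2}_T$ by the Molinet--Pilod estimates. The forcing contribution to $w$ is bounded in $L^2_T L^2\hookrightarrow X^{0,-1/2}_T$ using that $e^{-iF/2}$ is a unit-modulus multiplier and that $\partial_x^{-1}g\in L^2_TL^\infty$ by Sobolev embedding on $\T$. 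For $u$, the standard $L^2$ energy identity
$$\nor{u(t)}{L^2}^2 = \nor{u_0}{L^2}^2 + 2\int_0^t\!\!\int_\T u\, g\, dx\, ds$$
provides the $L^\infty_TL^2$ control, while the $L^4_TL^4$ and $X^{-1,1}_T$ bounds are recovered from the Bourgain-space bounds on $w$ via the inverse gauge transform.

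The main obstacle is closing the fixed point for large $L^2$ data. In \cite{MolinetPilodBO} this is handled by the scaling symmetry of BO, but scaling is destroyed here both by the forcing $g$ and by the fixed torus length $2\pi$. The remedy is the refined bilinear estimate of Lemma \ref{bilinestim} gaining an extra power $T^{\theta}$ for some $\theta>0$, combined with the intermediate estimates of the Appendix: on a short interval $[0,\tau]$ whose length depends only on $\nor{u_0}{L^2}$ and $\nor{g}{L^2_TL^2}$, this gain absorbs the size of the data and the contraction closes. The energy inequality above controls $\nor{u(\tau)}{L^2}$ in terms of $\nor{u_0}{L^2}$ and $\nor{g}{L^2_TL^2}$, so the local construction may be iterated over finitely many consecutive intervals to cover $[0,T]$.

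Uniqueness in the stated class follows by applying the same bilinear and Lipschitz estimates to the difference of two solutions $u^{(1)}-u^{(2)}$ and to the difference of their gauge transforms $w^{(1)}-w^{(2)}$, then closing by a short-time Gronwall argument which is iterated up to $T$. Finally, the continuity $u\in C([0,T],L^2(\T))$ and the local Lipschitz dependence of the data-to-solution map are obtained by approximating $(u_0,g)$ by smooth data, for which classical energy methods give strong solutions, and passing to the limit with the uniqueness bound.
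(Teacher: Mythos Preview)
Your overall framework is right and matches the paper: reduce to zero mean, derive the Schr\"odinger equation for $w$ with the bilinear term $-\partial_x P_+(W\,P_-\partial_x u)$ plus forcing remainders, couple with the original equation for $u$, and close via a bootstrap on $(u,w)$ in $X_1\times X_2$ using the $T^\theta$-refined bilinear estimate of Lemma~\ref{bilinestim}. The small-data case and the globalization by the $L^2$ energy identity are exactly as in the paper.

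There is, however, a genuine gap in your large-data closure. You write that ``the $L^4_TL^4$ and $X^{-1,1}_T$ bounds are recovered from the Bourgain-space bounds on $w$ via the inverse gauge transform'' and that the $T^\theta$ gain then ``absorbs the size of the data''. But the inverse-gauge step is precisely where no smallness is available: from $P_+u=2iw+A+B$ (equation \eqref{ABgauge}) one gets
\[
\|P_+u\|_{L^\infty_TL^2}+\|P_+u\|_{L^4_{T,x}}\le \|w\|_{X_2}+C\|(u,w)\|_X^2,
\]
and the quadratic term on the right carries \emph{no} factor of $T^\theta$. Feeding this into the bootstrap yields
\[
\|(u,w)\|_X\lesssim \|u_0\|_{L^2}+\|g\|_{L^2_TL^2}+\|(u,w)\|_X^2+\|(u,w)\|_X^3,
\]
which closes only for small data regardless of how short the time interval is. The $T^\theta$ gain in Lemma~\ref{bilinestim} affects only the $w$-equation, not the gauge inversion.

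The paper's remedy (Section~5) is a high/low frequency splitting of $u$ that you do not describe. One estimates $P_N u$ directly from the BO equation, gaining $C(N)T^{1/2}$ on the nonlinear term (Lemma giving \eqref{estimLipschilowA}); and one modifies the gauge inversion at high frequency to obtain
\[
\|P_{\ge N}u\|_{L^\infty_TL^2}+\|P_{\ge N}u\|_{L^4_{T,x}}\le \|w\|_{X_2}+\frac{C}{\sqrt{N}}\|(u,w)\|_X^2
\]
(Lemma~\ref{lemmaestimPNL2L4}, using Lemma~\ref{lemmeexphigh} of the Appendix). Now every quadratic/cubic term carries a factor $T^\theta$, $T^{1/2}$, or $N^{-1/2}$; choosing first $N$ large (depending on the size $R$ of the data) and then $T$ small closes the bootstrap. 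Your reference to ``the intermediate estimates of the Appendix'' is too vague to count as this argument: the crucial point is that the gauge is only invertible with a gain at \emph{high} frequency, and the low frequencies must be treated separately. You should make this frequency decomposition explicit.
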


The proof of this theorem does not follow directly from the theory established by Molinet and Pilod \cite{MolinetPilodBO}.
We have to distinguish two cases, small and large data instead.  To apply the scale argument in \cite{MolinetPilodBO} there is a need of
some uniform estimates that do not hold when we introduced the source term $g$ and consider any data.  
To deal with large data we have to go around the gauge transform since in this case it not \lq\lq invertible".  We prove the
theorem when $[u_0]=0$. The general case follows as we commented above.

The plan of the paper is as follows: in Section 2 we introduce some notations  and establish estimates needed in our analysis. 
General estimates which do not depend on the size of the data are derived in Section 3. In Section 4 we deduce estimates used to complete the proof of  Theorem \ref{sourcelwp}
for small data. Next in Section 5 we establish Theorem \ref{sourcelwp} for large data.  
The control results are proved in Section 6.  Section 7 contains
preliminary tools to establish Theorem \ref{stabilization}. 
The stabilization is demonstrated  in Section 8, and finally we include
an appendix containing some technical results.

\section{Notations and Preliminary Estimates}

We use standard  notation in Partial Differential Equations. In addition, we will use $C$  to denote various constants that may change 
from line to line. For any positive numbers  $a$ and $b$, we use the notation $a\lesssim b$ to mean that there exists a constant $C$
such that $a\le C\,b$.

For a $2\pi$-periodic function $f$ we define its Fourier transform on $\Z$  by
$$
\widehat{f}(\xi)=\int_{\T} e^{-ix\xi}\,f(x)\,dx,\quad \xi\in\Z.
$$

The free group associated with the linearized Benjamin-Ono equation denoted by $\U(\cdot)$ is defined as
\begin{equation*}
\widehat{\U(t)f}(\xi)=e^{it|\xi|\xi}\widehat{f}(\xi).
\end{equation*}

The norm of Sobolev spaces $H^s(\T)$ will be denoted by $\|\cdot\|_s$ and when $s=0$  the notation $\|\cdot\|$ will be used.

We  will use the following projection operators:  for $N\in \N^*$  we define
\begin{equation}\label{projections}
\begin{split}
&\widehat{P_{\pm}f}=\chi_{\R_{\pm}}\widehat{f},\hskip35pt\widehat{P_{\leq 0}f}=\chi_{\{\xi \leq 0\}}\widehat{f}, \;\;\;\;
\widehat{P_{\ge N}f}=\chi_{\{\xi \geq N\}}\widehat{f}\\
&\widehat{P_{\le N}f}=\chi_{\{\xi \leq N\}}\widehat{f},\hskip15pt \widehat{P_0 f}=\widehat{f}(0),\hskip35pt
\widehat{P_{ N}f}=\chi_{\{|\xi| \leq N\}}\widehat{f},
\end{split}
\end{equation}
and
\begin{equation}
\widehat{Q_{N}f}=\chi_{\{|\xi| > N\}}\widehat{f}.
\end{equation}

For $s, b\in \R$ we define the spaces  $X^{s,b}$, $Z^{s,b}$ and $\widetilde{Z}^{s,b}$ via the norms
\begin{equation*}
\|v\|_{X^{s,b}}=\Big(\int\int \langle \tau+|\xi|\xi\rangle^{2b}\langle \xi \rangle^{2s}\,|\widehat{v}(\xi,\tau)|^2\,d\xi d\tau\Big)^{1/2}\end{equation*}
\begin{equation*}
\|v\|_{Z^{s,b}}=\Big(\int(\int \langle \tau+|\xi|\xi\rangle^{b}\langle \xi \rangle^{s}\,|\widehat{v}(\xi,\tau)|\,d\xi)^2 d\tau\Big)^{1/2},
\end{equation*}
and 
\begin{equation*}
\|v\|_{\widetilde{Z}^{s,b}}=\|P_0 v\|_{Z^{s,b}}+\Big(\underset{N}{\sum}\| v_N\|_{Z^{s,b}}^2\Big)^{1/2},
\end{equation*}
where  $d\xi$ denotes the counting measure in $\Z$, $\;\langle x\rangle= 1+|x|$, and $v_N$ corresponds to a classical Littlewood-Paley decomposition (see \cite{MolinetPilodBO}).

We also define the spaces $X^s_2$ and ${X^s_2}^{'}$ as
\begin{equation*}
\|v\|_{X_2^s}=\|v\|_{X^{s,1/2}}+\|v\|_{\widetilde{Z}^{s,0}}
\end{equation*}
and
\begin{equation*}
\|v\|_{{X_2^s}^{'}}=\|v\|_{X^{s,-1/2}}+\|v\|_{\widetilde{Z}^{s,-1}}.
\end{equation*}

When $s=0$ we will use $X_2$ and ${X_2'}$ to simplify the notation.

Next we define the restriction in time spaces. More precisely, for any function space $B$ and any $T>0$, we denote by $B_T$ the 
corresponding restriction in time space endowed with the norm
\begin{equation}\label{restrictionTS}
\|u\|_{B_T} = \inf_{v\in B}\,\big\{\|v\|_B\; :\; v(\cdot)=u(\cdot) \;\;\text{in}\;\; [0,T)\big\}.
\end{equation}

Note that, as pointed out in \cite{MolinetPilodBO}, we have the embedding
\bna
X_{2,T}^s \subset \widetilde{Z}^{s,0}_T \subset C([0,T],H^s(\T)).
\ena
We end this section by introducing the functional space $X$ with the norm
\bna
\|(u,w)\|_{X}=\|u\|_{L^{\infty}([0,T],L^2)}+\|u\|_{L^{4}([0,T]\times \T)}+\|u\|_{X_T^{-1,1}}+\|w\|_{X_T^{0,1/2}}+\|w\|_{\widetilde{Z}_T^{0,0}}
\ena
and we denote by $X_1$ the functional space  corresponding to the norm for $u$ and $X_2$ the functional space given by the norm  corresponding to $w$ as above. 
We will also use the notation  $U=(u,w)\in X$.\\

\vskip.3cm

\subsection{Linear estimates}

We will begin presenting some estimates in the Bourgain spaces. The reader is referred  to \cite{ginibreTV} for details.

\begin{lemme}\label{lema1} Let $s\in \R$. Then
\begin{equation*}
\|\eta(t)\U(t)h\|_{X_2^s}\lesssim \|h\|_{H^s}.
\end{equation*}
\end{lemme}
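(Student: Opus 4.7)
The plan is to work entirely on the space-time Fourier side. Since $\U(t)$ acts diagonally on each Fourier mode, the space-time Fourier transform of $v(t,x):=\eta(t)\U(t)h(x)$ factorizes as
\[
\widehat{v}(\tau,\xi)=\widehat{\eta}(\tau+|\xi|\xi)\,\widehat{h}(\xi),
\]
and with this explicit formula the two pieces $\|v\|_{X^{s,1/2}}$ and $\|v\|_{\widetilde{Z}^{s,0}}$ of the norm $\|v\|_{X_2^s}$ can be treated independently.

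For the $X^{s,1/2}$ piece I would, at each fixed $\xi$, perform the change of variables $\sigma=\tau+|\xi|\xi$. This decouples the $\tau$-integral from the frequency and reduces the estimate to the product
\[
\|v\|_{X^{s,1/2}}^2 \,=\, \Big(\int_{\R}\langle\sigma\rangle\,|\widehat{\eta}(\sigma)|^2\,d\sigma\Big)\sum_{\xi\in\Z}\langle\xi\rangle^{2s}\,|\widehat{h}(\xi)|^2,
\]
in which the prefactor is finite because $\eta\in\mathcal{S}(\R)$. This is the routine part.

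The $\widetilde{Z}^{s,0}$ piece is where the work lies. The zero mode is immediate, since $P_0 v=\frac{\widehat{h}(0)}{2\pi}\,\eta(t)$ gives $\|P_0 v\|_{Z^{s,0}}\lesssim\|h\|_{H^s}$. For the dyadic pieces $v_N$ supported on $|\xi|\sim N$, the plan is to square out, use Fubini, and arrive at
\[
\|v_N\|_{Z^{s,0}}^2 \,=\, \sum_{|\xi_1|,|\xi_2|\sim N}\langle\xi_1\rangle^s\langle\xi_2\rangle^s\,|\widehat{h}(\xi_1)||\widehat{h}(\xi_2)|\,\psi\big(|\xi_1|\xi_1-|\xi_2|\xi_2\big),
\]
where $\psi(c):=\int|\widehat\eta(\sigma)||\widehat\eta(\sigma+c)|\,d\sigma$ has rapid decay (coming from $\widehat\eta$ being Schwartz). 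I would then close the bilinear sum by Schur's test, which reduces everything to the $N$-uniform estimate
\[
\sup_{\xi_1}\sum_{|\xi_2|\sim N}\big|\psi\big(|\xi_1|\xi_1-|\xi_2|\xi_2\big)\big|\,\lesssim\,1.
\]
This bound holds because, on a dyadic shell and restricted to a fixed sign, the integers $\xi_2$ produce values $|\xi_2|\xi_2=\pm\xi_2^2$ whose consecutive gaps are $2|\xi_2|+1\gtrsim 1$, so the rapid decay of $\psi$ makes the series convergent with an $N$-independent bound. Squaring and summing over $N$ then yields $\|v\|_{\widetilde{Z}^{s,0}}^2\lesssim\|h\|_{H^s}^2$, which finishes the proof.

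The main obstacle to expect is precisely this Schur step. The $\ell^1_\xi L^2_\tau$ geometry of $Z^{s,0}$ does not match the $\ell^2_\xi$ nature of $H^s$, and a naive Cauchy--Schwarz in the inner sum would waste a factor of $N^{1/2}$. The necessary gain is arithmetic, coming from the spacing of the BO phase $|\xi|\xi$ along the integers, and it is this almost-orthogonality -- visible only after Littlewood--Paley localization -- that has to be identified and exploited.
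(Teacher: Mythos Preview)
Your argument is correct. The paper does not actually supply a proof of this lemma; it states it and refers the reader to \cite{ginibreTV} for details, so there is no in-paper argument to compare against. The route you take --- explicit space-time Fourier factorization, a direct change of variables for the $X^{s,1/2}$ part, and for the $\widetilde{Z}^{s,0}$ part a Schur-type almost-orthogonality argument based on the separation of the values $|\xi|\xi$ on $\Z$ --- is essentially the standard one underlying such homogeneous Bourgain-space estimates. Your diagnosis of the obstacle (the $\ell^1_\xi$ structure of $Z^{s,0}$ versus the $\ell^2_\xi$ structure of $H^s$, and the loss of $N^{1/2}$ from a naive Cauchy--Schwarz) is accurate, and the remedy you identify is the right one.

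A minor cosmetic variant, equivalent to your Schur step: apply Cauchy--Schwarz in $\xi$ with weight $|\widehat\eta(\tau+|\xi|\xi)|$, using that $\sup_{\tau}\sum_{\xi\in\Z}|\widehat\eta(\tau+|\xi|\xi)|<\infty$ (immediate from the injectivity of $\xi\mapsto|\xi|\xi$ on $\Z$ and the rapid decay of $\widehat\eta$), then integrate in $\tau$. This avoids writing out the bilinear form and $\psi$ explicitly but is the same mechanism.
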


\begin{lemme}\label{lemma1b}
Let $s\in\R$. Then for any $0<\delta<1/2$,
\begin{equation}\label{lemma1bA}
\|\eta(t)\int_0^t \U(t-t')H(t')\,dt'\|_{X^{s,\frac12+\delta}}\lesssim \|H\|_{X^{s,-\frac12+\delta}}
\end{equation}
and
\begin{equation}\label{lemma1bB}
\|\eta(t)\int_0^t \U(t-t')H(t')\,dt'\|_{X_2^{s}}\lesssim \|H\|_{X^{s,-\frac12}}+\|H\|_{\widetilde{Z}^{s,-1}}.
\end{equation}
\end{lemme}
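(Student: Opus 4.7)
The plan is to follow the classical Bourgain-space approach for inhomogeneous estimates, in the form used in \cite{ginibreTV}. The main structural reduction is to conjugate by $\U$: since $\int_0^t \U(t-t')H(t')\,dt' = \U(t)\int_0^t \U(-t')H(t')\,dt'$ and the map $v(t)\mapsto \U(-t)v(t)$ is an isometry on $X^{s,b}$, $Z^{s,b}$ and $\widetilde Z^{s,b}$ (it shifts $\tau\mapsto \tau+|\xi|\xi$ fiberwise in $\xi$), both inequalities reduce to scalar-in-time estimates at each spatial frequency $\xi$, weighted by $\langle\xi\rangle^s$. Writing $F(\xi,t) := e^{-it|\xi|\xi}\widehat{H}(\xi,t)$, the task becomes bounding $\eta(t)\int_0^t F(\xi,t')\,dt'$ in the appropriate $H^b_t$- or $L^1_\tau$-norm by the corresponding norm of $F(\xi,\cdot)$.

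The core identity is the Fourier representation
\[
\eta(t)\int_0^t F(\xi,t')\,dt' \;=\; \eta(t)\int_{\R} \frac{e^{it\tau}-1}{i\tau}\,\widehat F(\xi,\tau)\,d\tau.
\]
I would split the $\tau$-integral at $|\tau|=1$. On $|\tau|\le 1$, I expand $(e^{it\tau}-1)/(i\tau)$ as a Taylor series in $t\tau$, producing a sum of Schwartz-in-$t$ functions (namely $\eta(t)t^{k}/k!$) multiplied by moments $\int \tau^{k-1}\widehat F(\xi,\tau)\,d\tau$; these moments are harmlessly controlled by $\|F(\xi,\cdot)\|_{H^{-1/2+\delta}_t}$. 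On $|\tau|\ge 1$, the factor $1/\tau$ gains exactly one derivative in time, so the resulting contribution is $\|\eta\|_{H^{1/2+\delta}_t}\cdot\|\langle\tau\rangle^{-1}\widehat F\|_{L^2_\tau(|\tau|\ge 1)}\lesssim \|F(\xi,\cdot)\|_{H^{-1/2+\delta}_t}$. Summing in $\xi$ with the weight $\langle\xi\rangle^{2s}$ yields \eqref{lemma1bA}.

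For \eqref{lemma1bB}, I treat the two pieces of the $X_2^s$-norm separately. The $X^{s,1/2}$ piece repeats the argument above at the endpoint $\delta=0$, where the high-$\tau$ contribution is borderline divergent in $L^2_\tau$: I bound it in $L^2_\tau$ only to the extent allowed by the $X^{s,-1/2}$ input, and whatever remains is absorbed into an $L^1_\tau$ estimate. This $L^1_\tau$ structure is precisely the $\widetilde Z^{s,-1}$ norm, since $\widetilde Z^{s,b}$ measures $\langle\tau+|\xi|\xi\rangle^b\langle\xi\rangle^s\widehat{v}$ in $L^1_\xi L^2_\tau$ after a Littlewood–Paley decomposition. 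For the $\widetilde Z^{s,0}$ piece on the left, the same Fourier representation shows that the time Fourier transform of $\eta(t)\int_0^t F(\xi,t')\,dt'$ is a Schwartz term plus a convolution against $\langle\tau\rangle^{-1}\widehat F(\xi,\tau)$, whose $L^2_\tau L^1_\xi$ norm is exactly controlled by $\|H\|_{\widetilde Z^{s,-1}}$; the $L^2$-in-Littlewood–Paley summation uses orthogonality on the left-hand side and passes through $P_N$, which commutes with $\eta$ and with the Duhamel operator.

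The main obstacle, as always in this circle of estimates, is the endpoint $b=1/2$: a naïve Cauchy–Schwarz on the high-$\tau$ part loses a logarithm, and the $\widetilde Z^{s,-1}$ correction term on the right-hand side of \eqref{lemma1bB} is exactly the standard repair — it replaces an $L^2_\tau$ bound that fails marginally by an $L^1_\tau$ bound that holds. Everything else is bookkeeping in $\xi$ and in Littlewood–Paley blocks.
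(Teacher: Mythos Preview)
Your approach is correct and is exactly what the paper intends: the lemma is stated there without proof, with a pointer to \cite{ginibreTV}, and your conjugation by $\U$, split at $|\tau|=1$, Taylor expansion on $|\tau|\le 1$, and $1/\tau$-gain on $|\tau|\ge 1$ is precisely the Ginibre--Tsutsumi--Velo scheme, with the endpoint repair via the $L^1_\tau$-type norm $\widetilde Z^{s,-1}$ handled as you describe. One minor correction: you describe $\widetilde Z^{s,b}$ as having an $L^1_\xi L^2_\tau$ structure, but the order that makes the embedding $\widetilde Z^{s,0}\hookrightarrow C_t H^s_x$ work (and that is consistent with your own emphasis on an $L^1_\tau$ bound) is $L^2_\xi L^1_\tau$, summed in $\ell^2$ over Littlewood--Paley blocks --- the paper's displayed definition of $Z^{s,b}$ appears to have $d\xi$ and $d\tau$ interchanged by a typo, and the usage in the Appendix (duality against $L^2_\xi L^\infty_\tau$) confirms this.
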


\begin{lemme}\label{le1}
 For any $T>0$, $s\in\R$ and  $-\frac{1}{2} <b'\leq b<\frac{1}{2}$, it holds
\begin{equation}
\|u\|_{X^{0,b'}_T}\lesssim T^{b-b'}\|u\|_{X^{0,b}_T}.
\end{equation}
\end{lemme}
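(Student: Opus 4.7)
The plan is to establish this through the classical Bourgain-space time-localization argument: extend the function $u$ off $[0,T]$, multiply by a smooth time cutoff at scale $T$, and reduce the inequality to a one-dimensional Sobolev-in-time multiplication estimate.

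First I would fix an extension. By definition of the restriction norm, I can choose $\tilde u\in X^{0,b}$ with $\tilde u\equiv u$ on $[0,T]$ and $\|\tilde u\|_{X^{0,b}}\le 2\|u\|_{X^{0,b}_T}$. Let $\psi\in C_c^\infty(\R)$ satisfy $\psi\equiv 1$ on $[0,1]$, and set $\tilde u_T(t,x):=\psi(t/T)\tilde u(t,x)$. For $T\le 1$, the function $\tilde u_T$ is still an extension of $u$ on $[0,T]$, hence
\begin{equation*}
\|u\|_{X^{0,b'}_T}\le \|\tilde u_T\|_{X^{0,b'}}\le 2\|\psi(t/T)\tilde u\|_{X^{0,b'}}.
\end{equation*}
For $T\ge 1$ the inequality is trivial, since $T^{b-b'}\ge 1$ and $X^{0,b}\hookrightarrow X^{0,b'}$.

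Next I would reduce to a scalar inequality in $t$. A direct computation on the Fourier side shows that multiplication by $\psi(t/T)$ commutes with the spatial-Fourier multiplier $U(-t)$, so
\begin{equation*}
\|\psi(t/T)\tilde u\|_{X^{0,b'}}=\bigl\|\psi(\cdot /T)\,U(-\cdot)\tilde u\bigr\|_{L^2_xH^{b'}_t},\qquad \|\tilde u\|_{X^{0,b}}=\|U(-\cdot)\tilde u\|_{L^2_xH^b_t}.
\end{equation*}
Consequently, by Plancherel in $x$, it suffices to prove the scalar bound
\begin{equation*}
\|\psi(t/T)v\|_{H^{b'}(\R)}\lesssim T^{b-b'}\|v\|_{H^b(\R)}\qquad\text{for every }v\in H^b(\R),
\end{equation*}
and then integrate over the spatial frequencies. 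This scalar inequality I would prove by distinguishing $b=b'$ (where $\psi(\cdot/T)$ acts boundedly on $H^b(\R)$ uniformly in $T$, because $|b|<1/2$) and the gain $b>b'$, which comes from the Fourier support of $\psi(\cdot/T)$ being essentially in $|\tau|\lesssim 1/T$: splitting $v$ into the pieces $|\tau|\le 1/T$ and $|\tau|>1/T$, the low-frequency piece gives the factor $T^{b-b'}$ directly from the weight ratio, while the high-frequency piece is handled by the uniform $H^{b'}$-boundedness of the cutoff.

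The main obstacle is precisely this scalar lemma, whose conditions $-1/2<b',b<1/2$ are sharp: the characteristic function of an interval (and by approximation the cutoff $\psi(\cdot/T)$ in its frequency localization) fails to act boundedly on $H^{\pm 1/2}$, so the argument cannot reach the endpoints. Once this scalar estimate is in hand, combining the three steps yields $\|u\|_{X^{0,b'}_T}\lesssim T^{b-b'}\|\tilde u\|_{X^{0,b}}$, and passing to the infimum over admissible extensions $\tilde u$ gives the claim.
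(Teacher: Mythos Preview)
The paper does not supply its own proof of this lemma; it is stated among the standard linear estimates in Bourgain spaces with the blanket reference to \cite{ginibreTV} for details. Your argument is exactly the classical one found there and in the subsequent literature: pick a near-optimal extension, multiply by a rescaled time cutoff, and reduce via $U(-t)$ to the one-dimensional time-Sobolev multiplication bound $\|\psi(\cdot/T)v\|_{H^{b'}(\R)}\lesssim T^{b-b'}\|v\|_{H^{b}(\R)}$ for $-\tfrac12<b'\le b<\tfrac12$. So your proposal is correct and coincides with the intended approach.
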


The next estimate is a Strichartz type estimate (see \cite{MolinetPilodBO}).
\begin{lemme}\label{strichartz}
It holds that
\begin{equation*}
\|u\|_{L^4_{x,t}}\lesssim \|u\|_{X^{0,\frac38}}
\end{equation*}
and for any $T>0$ and $\frac38\le b\le \frac12$,
\begin{equation*}
\|u\|_{L^4_{x,T}}\lesssim T^{b-\frac38}\|u\|_{X^{0,b}_T}.
\end{equation*}
\end{lemme}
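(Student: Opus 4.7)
The plan is to establish the global estimate $\|u\|_{L^4_{x,t}}\lesssim \|u\|_{X^{0,3/8}}$ first, then deduce the time-restricted version as a routine consequence of Lemma \ref{le1}. The global estimate is an analog of Bourgain's $L^4$-Strichartz inequality for the one-dimensional periodic Schr\"odinger equation, with the dispersion relation $\tau=-\xi^2$ replaced by $\tau=-|\xi|\xi$, so essentially the same mechanism applies.

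For the global estimate I would start from the identity $\|u\|_{L^4_{x,t}}^2=\|u^2\|_{L^2_{x,t}}$ and use Plancherel to express the right-hand side as the $L^2_{\xi,\tau}$ norm of the space-time convolution $\widehat{u}\ast \widehat{u}$. After substituting $h(\xi,\tau)=\langle \tau+|\xi|\xi\rangle^{3/8}\widehat u(\xi,\tau)$, so that $\|h\|_{L^2}=\|u\|_{X^{0,3/8}}$, and applying Cauchy--Schwarz in the convolution variables $(\xi_1,\tau_1)$, the estimate reduces to the pointwise bound
\[
K(\xi,\tau):=\sum_{\xi_1\in\Z}\int_{\R}\frac{d\tau_1}{\langle \tau_1+|\xi_1|\xi_1\rangle^{3/4}\langle (\tau-\tau_1)+|\xi-\xi_1|(\xi-\xi_1)\rangle^{3/4}}\lesssim 1
\]
uniformly in $(\xi,\tau)\in \Z\times \R$. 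The inner $\tau_1$-integral is (up to translation) the self-convolution of $\langle \cdot\rangle^{-3/4}$ on $\R$, hence bounded by $C\langle \tau+\Phi(\xi,\xi_1)\rangle^{-1/2}$ where
\[
\Phi(\xi,\xi_1):=|\xi_1|\xi_1+|\xi-\xi_1|(\xi-\xi_1).
\]

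The remaining sum $\sum_{\xi_1}\langle \tau+\Phi(\xi,\xi_1)\rangle^{-1/2}$ is the heart of the proof. I would split into sign-sectors: when $\xi_1$ and $\xi-\xi_1$ have the same sign, $\Phi$ is a quadratic polynomial in $\xi_1$ (essentially a translate of $2(\xi_1-\xi/2)^2$), so the number of integers $\xi_1$ on which $\langle \tau+\Phi\rangle\sim 2^j$ is controlled by an elementary level-set count for a quadratic form; when the signs are opposite, $\Phi$ is affine in $\xi_1$ and the count is even easier. Summing dyadically in $j$ gives the uniform bound on $K$. This counting step is the main obstacle, since it is the only place where the specific Benjamin--Ono symbol $|\xi|\xi$ enters; the exponent $3/8$ emerges from the balance between the $b$-weight $3/4$ consumed by the $\tau_1$-convolution and the $1/2$ gain available from the resulting sum.

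For the time-localized version, let $\tilde u\in X^{0,b}$ be any extension of $u|_{[0,T]}$ satisfying $\|\tilde u\|_{X^{0,b}}\le 2\|u\|_{X^{0,b}_T}$. Then
\[
\|u\|_{L^4_{x,T}}\le \|\tilde u\|_{L^4_{x,t}}\lesssim \|\tilde u\|_{X^{0,3/8}},
\]
and taking the infimum over such extensions gives $\|u\|_{L^4_{x,T}}\lesssim \|u\|_{X^{0,3/8}_T}$. Applying Lemma \ref{le1} with $b'=3/8$ and $b\in[3/8,1/2]$ produces the factor $T^{b-3/8}$, completing the proof.
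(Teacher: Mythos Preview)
The paper does not give its own proof of this lemma---it simply refers to Molinet--Pilod \cite{MolinetPilodBO}---so there is nothing to compare against here. What matters is whether your argument is correct, and there is a genuine gap in the first part.

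Your reduction to the kernel bound $K(\xi,\tau)\lesssim 1$ fails: the kernel is \emph{not} uniformly bounded. The problem is sharpest in the opposite-sign sector, which you call ``even easier''. When $\xi_1>0>\xi-\xi_1$ one has $\Phi(\xi,\xi_1)=2\xi\xi_1-\xi^2$, which is indeed affine in $\xi_1$, but $\xi_1$ ranges over an unbounded half-line, so $\sum_{\xi_1}\langle \tau+\Phi(\xi,\xi_1)\rangle^{-1/2}$ is a sum of the type $\sum_{n}\langle c+2\xi n\rangle^{-1/2}$, which diverges. The degenerate case $\xi=0$ makes this explicit: there $\Phi(0,\xi_1)\equiv 0$ on both opposite-sign sectors, and your kernel becomes $\sum_{\xi_1\neq 0}\langle\tau\rangle^{-1/2}=\infty$. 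Even in the same-sign sector, where $\Phi$ is quadratic and the range of $\xi_1$ is finite, the sum $\sum_{|m|\le |\xi|/2}\langle A+2m^2\rangle^{-1/2}$ picks up a factor of $\log|\xi|$ in the worst case. So the direct Cauchy--Schwarz route with weights $\langle\sigma\rangle^{-3/4}$ on each factor cannot close.

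The standard fix (Bourgain's argument for the periodic Schr\"odinger $L^4$ estimate, which transfers to the Benjamin--Ono symbol by splitting into positive and negative frequencies) is to perform a dyadic decomposition in the modulation variable \emph{before} applying Cauchy--Schwarz. For a single dyadic piece $u_k$ supported where $\langle\tau+|\xi|\xi\rangle\sim 2^k$, the counting problem becomes: how many integers $\xi_1$ satisfy $|\tau+\Phi(\xi,\xi_1)|\lesssim 2^k$? This number is $O(2^{k/2})$ (from the quadratic level-set count, with the opposite-sign sector now contributing a bounded number of $\xi_1$ because the $\tau_1$-interval has length $\sim 2^k$), leading to $\|u_k\|_{L^4}\lesssim 2^{3k/8}\|u_k\|_{L^2}$ and hence the estimate after summing in $k$. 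Your time-localized deduction from Lemma~\ref{le1} is fine once the global estimate is established.
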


Recalling the notation ${X_2'}=X^{0,-1/2}\cap {\widetilde{Z}}^{0,-1}$ we have that
\begin{lemme}\label{lemma-estimaYL2} For $0<\epsilon\ll 1$, it holds
\begin{equation}\label{estimaYL2}
\|z\|_{X_2'}\lesssim T^{1/2-\e}\|z\|_{L^2([0,T]\times \T)}.
\end{equation}
\end{lemme}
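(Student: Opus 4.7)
The plan is to bound the two pieces $X^{0,-1/2}$ and $\widetilde Z^{0,-1}$ separately, and to reduce the second one to the first via a pointwise (time-independent) embedding, after which a single application of Lemma~\ref{le1} finishes everything. Only one step requires real work: a uniform arithmetic bound on $\sum_\xi\langle\tau+|\xi|\xi\rangle^{-1}$.

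\smallskip

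For the $X^{0,-1/2}$-piece, I would apply Lemma~\ref{le1} with $b=0$ and $b'=-\frac{1}{2}+\epsilon$, which is in the admissible range $-\frac12<b'\le b<\frac12$, to get
$$\|z\|_{X^{0,-1/2+\epsilon}_T}\lesssim T^{1/2-\epsilon}\|z\|_{X^{0,0}_T}=T^{1/2-\epsilon}\|z\|_{L^2([0,T]\times\T)}.$$
Since $\langle\tau+|\xi|\xi\rangle^{-1/2}\le\langle\tau+|\xi|\xi\rangle^{-1/2+\epsilon}$, we have $\|z\|_{X^{0,-1/2}_T}\le\|z\|_{X^{0,-1/2+\epsilon}_T}$, so the bound transfers.

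\smallskip

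For the $\widetilde Z^{0,-1}$-piece, the plan is to establish the pointwise inequality
$$\|v\|_{\widetilde Z^{0,-1}}\lesssim \|v\|_{X^{0,-1/2}}\qquad\text{for every }v,$$
after which, taking the infimum over extensions of $z$ in the above yields $\|z\|_{\widetilde Z^{0,-1}_T}\lesssim\|z\|_{X^{0,-1/2}_T}\lesssim T^{1/2-\epsilon}\|z\|_{L^2}$ automatically. The key ingredient, and I expect the main obstacle, is the uniform arithmetic bound
$$S:=\sup_{\tau\in\R}\sum_{\xi\in\Z}\langle\tau+|\xi|\xi\rangle^{-1}<\infty.$$
Since $|\xi|\xi=\operatorname{sgn}(\xi)\xi^2$, the sum reduces to $\sum_{n\ge 0}\langle\tau\pm n^2\rangle^{-1}$. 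For $|\tau|$ large, set $m=\lfloor\sqrt{|\tau|}\rfloor$; only $O(1)$ values of $n$ near $m$ can produce a term of size $\sim 1$ (because consecutive squares are separated by $\sim 2m$), and the remaining terms $\sum_{k}\langle\tau\pm(m+k)^2\rangle^{-1}$ decay like $1/(2m|k|)$ up to $k\sim m$ and like $1/k^2$ beyond, giving a tail of size $O(\log m/m)$; the very-far-away contribution is controlled by comparison with $\int_0^\infty du/(\sqrt u\,\langle\tau-u\rangle)=O((\log\langle\tau\rangle)/\sqrt{\langle\tau\rangle})$. Thus $S<\infty$ uniformly in $\tau$.

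\smallskip

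Granting $S<\infty$, Cauchy-Schwarz in $\xi$ at fixed $\tau$ yields
$$\Bigl(\sum_\xi\langle\tau+|\xi|\xi\rangle^{-1}|\hat v(\xi,\tau)|\Bigr)^2\le S\cdot\sum_\xi\langle\tau+|\xi|\xi\rangle^{-1}|\hat v(\xi,\tau)|^2,$$
and integration in $\tau$ gives $\|v\|_{Z^{0,-1}}^2\lesssim\|v\|_{X^{0,-1/2}}^2$. The same inequality applied to each dyadic block $v_N$, together with Plancherel summation in $N$, plus the trivial $\langle\tau\rangle^{-2}\le\langle\tau\rangle^{-1}$ for the $P_0 v$ component, delivers the desired embedding $\|v\|_{\widetilde Z^{0,-1}}\lesssim\|v\|_{X^{0,-1/2}}$. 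Combining the two pieces, $\|z\|_{X_2'}=\|z\|_{X^{0,-1/2}_T}+\|z\|_{\widetilde Z^{0,-1}_T}\lesssim T^{1/2-\epsilon}\|z\|_{L^2([0,T]\times\T)}$, which is \eqref{estimaYL2}.
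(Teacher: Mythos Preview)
Your argument hinges on reading the $Z^{s,b}$-norm as $L^2_\tau\ell^1_\xi$, which is indeed how the displayed definition in the paper is typeset. Unfortunately that display carries a typo: the norm actually used---in the paper's own proof of this lemma, in \cite{MolinetPilodBO}, and in the embedding $\widetilde Z^{s,0}_T\subset C([0,T],H^s)$ stated just below the definitions---is
\[
\|v\|_{Z^{s,b}}^2=\sum_{\xi}\langle\xi\rangle^{2s}\Big(\int_\tau\langle\tau+|\xi|\xi\rangle^{b}\,|\hat v(\xi,\tau)|\,d\tau\Big)^2,
\]
i.e.\ $\ell^2_\xi L^1_\tau$. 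With this correct norm your intermediate embedding $\|v\|_{\widetilde Z^{0,-1}}\lesssim\|v\|_{X^{0,-1/2}}$ is \emph{false}: for $\hat v(\xi,\tau)=\delta_{\xi,0}\,\chi_{\{|\tau|\le R\}}(\tau)$ one computes $\|v\|_{Z^{0,-1}}\sim\log R$ while $\|v\|_{X^{0,-1/2}}\sim(\log R)^{1/2}$. Your arithmetic bound $\sup_\tau\sum_\xi\langle\tau+|\xi|\xi\rangle^{-1}<\infty$ is correct and pleasant, but it answers the Cauchy--Schwarz in the wrong variable.

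The paper's route is shorter and needs no arithmetic: at fixed $\xi$, set $\sigma=\tau+|\xi|\xi$ and apply Cauchy--Schwarz in $\tau$ with the splitting $\langle\sigma\rangle^{-1}=\langle\sigma\rangle^{-1/2-\e}\cdot\langle\sigma\rangle^{-1/2+\e}$ to get
\[
\Big(\int_\tau\langle\sigma\rangle^{-1}|\hat z|\Big)^2\le\Big(\int\langle\sigma\rangle^{-1-2\e}\,d\sigma\Big)\int_\tau\langle\sigma\rangle^{-1+2\e}|\hat z|^2,
\]
hence $\|z\|_{Z^{0,-1}}\lesssim\|z\|_{X^{0,-1/2+\e}}$, and the same blockwise handles $\widetilde Z^{0,-1}$. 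The $\e$-loss is forced here since $\langle\sigma\rangle^{-1}\notin L^1_\tau$. Then Lemma~\ref{le1} with $b=0$, $b'=-\tfrac12+\e$ supplies the factor $T^{1/2-\e}$, exactly as in your first step. So the fix is minimal: drop the arithmetic lemma, replace your sharp embedding into $X^{0,-1/2}$ by the weaker but true embedding into $X^{0,-1/2+\e}$, and conclude as you did.
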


\begin{proof}
From the definition we have that
\begin{equation*}
\begin{split}
\|z\|_{Z^{0,-1}}^2&=\int_{\xi}\big(\int_{\tau}\big\langle \tau+|\xi|\xi\big\rangle^{-1}|\hat{z}(\tau,\xi)|\big)^2
=\int_{\xi}\big(\int_{\sigma}\big\langle \sigma\big\rangle^{-1}|\hat{z}(\sigma -|\xi|\xi,\xi)|\big)^2\\
&\lesssim \int_{\xi}\int_{\sigma}\big\langle \sigma\big\rangle^{-1+2\e}|\hat{z}(\sigma -|\xi|\xi,\xi)|^2\lesssim \int_{\xi}\int_{\tau}\big\langle \tau
+|\xi|\xi\big\rangle^{-1+2\e}|\hat{z}(\tau,\xi)|^2=\|z\|_{X^{0,-1/2+\e}}^2
\end{split}
\end{equation*}
where we have used the Cauchy-Schwarz inequality and the integrability of $\big\langle \sigma\big\rangle^{-1-2\e}$. We can use Lemma \ref{le1} to finish the proof.
\end{proof}

Define $F=\partial_x^{-1}u$ where $\widehat{\partial_x^{-1} u}=\xi^{-1}\widehat{u}$, $\xi\in \Z^{*}$.
\begin{lemme} \label{estimlipsexp}Let  $F_i$, $i=1,2$,  be defined as above. Then
\begin{equation}\label{estimlipsexp1}
\|e^{\frac{i}{2}F_1}-e^{\frac{i}{2}F_2}\|_{L^{\infty}(\T)}\lesssim \|u_1-u_2\|_{L^2},
\end{equation}
\begin{equation}\label{estimlipsexp2}
\|P_+\big(e^{\frac{i}{2}F_1}-e^{\frac{i}{2}F_2}\big)\|_{L^{\infty}(\T)}\lesssim  \big(1+\|u_1\|_{L^2}+\|u_2\|_{L^2}\big)\|u_1-u_2\|_{L^2},
\end{equation}
and
\begin{equation}\label{estimlipsexp3}
\|P_+\big(e^{\frac{i}{2}F_1}-1\big)\|_{L^{\infty}(\T)}\lesssim \|u_1\|_{L^{2}}.
\end{equation}
\end{lemme}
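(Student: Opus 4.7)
All three estimates rest on a single differential identity together with one elementary Fourier bound. When $u$ has mean zero (which is the relevant case thanks to Remark \ref{rkchgevar}), the definition of $F=\partial_x^{-1}u$ gives
\[
\partial_x\, e^{\frac{i}{2}F} = \tfrac{i}{2}\, u\, e^{\frac{i}{2}F}.
\]
On the other hand, any function in the range of $P_+$ has zero Fourier mean, so $P_+\phi = \partial_x^{-1}\partial_x P_+ \phi$. Together with the elementary Fourier bound
\[
\|\partial_x^{-1} P_+ g\|_{L^\infty(\T)} \lesssim \|g\|_{L^2(\T)},
\]
coming from Cauchy--Schwarz and $\sum_{\xi\geq 1}\xi^{-2}<\infty$, this provides the driving mechanism of the proof.

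\textbf{Estimates \eqref{estimlipsexp1} and \eqref{estimlipsexp3}.} Since $u_j$ is real, $F_j$ is real. The pointwise bound $|e^{ia}-e^{ib}|\leq|a-b|$ therefore reduces \eqref{estimlipsexp1} to $\|F_1-F_2\|_{L^\infty}\lesssim\|u_1-u_2\|_{L^2}$, which is the same Fourier estimate applied to the mean-zero function $\partial_x^{-1}(u_1-u_2)$. For \eqref{estimlipsexp3}, the equality $P_+ 1 = 0$ reduces matters to bounding $P_+ e^{\frac{i}{2}F_1}$. Inverting $\partial_x$ as indicated above one obtains
\[
P_+ e^{\frac{i}{2}F_1} \;=\; \tfrac{i}{2}\,\partial_x^{-1} P_+ \bigl(u_1\, e^{\frac{i}{2}F_1}\bigr),
\]
and the driving bound together with $|e^{\frac{i}{2}F_1}|\equiv 1$ yields $\|P_+ e^{\frac{i}{2}F_1}\|_{L^\infty}\lesssim \|u_1\|_{L^2}$.

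\textbf{Estimate \eqref{estimlipsexp2}.} Setting $h_j = e^{\frac{i}{2}F_j}$ and $d=h_1-h_2$, differentiating and telescoping gives
\[
\partial_x d \;=\; \tfrac{i}{2}\bigl[(u_1-u_2)\,h_1 + u_2\, d\bigr].
\]
Projecting onto $P_+$ (with which $\partial_x$ commutes) and inverting $\partial_x$ exactly as before,
\[
P_+ d \;=\; \tfrac{i}{2}\,\partial_x^{-1} P_+ \bigl[(u_1-u_2)\,h_1 + u_2\, d\bigr],
\]
so that $\|P_+ d\|_{L^\infty}\lesssim \|u_1-u_2\|_{L^2}\|h_1\|_{L^\infty} + \|u_2\|_{L^2}\|d\|_{L^\infty}$. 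Plugging \eqref{estimlipsexp1} (already proved) into $\|d\|_{L^\infty}$ produces the advertised bound. The only point to monitor is bookkeeping of the zero mode: $\partial_x^{-1}$ is defined only on nonzero Fourier modes, which is precisely why $P_+$ must act before $\partial_x^{-1}$ in each display above. Since this is automatic in the identities used, no substantive obstacle arises, and the proof is essentially formal once one notices these two structural facts.
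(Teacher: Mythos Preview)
Your proof is correct and follows essentially the same route as the paper. Both arguments hinge on the fact that $P_+$ is not bounded on $L^\infty$ but becomes so after gaining one derivative: the paper phrases this as the Sobolev embedding $H^1(\T)\hookrightarrow L^\infty(\T)$ applied to $e^{\frac{i}{2}F_1}-e^{\frac{i}{2}F_2}$, while you phrase it as the Fourier estimate $\|\partial_x^{-1}P_+g\|_{L^\infty}\lesssim\|g\|_{L^2}$, and both then compute $\partial_x(e^{\frac{i}{2}F_1}-e^{\frac{i}{2}F_2})$ via the chain rule. Your packaging is marginally sharper (you obtain $(1+\|u_2\|_{L^2})\|u_1-u_2\|_{L^2}$ for \eqref{estimlipsexp2}, since working with $\partial_x^{-1}P_+$ directly bypasses the $L^2$ part of the $H^1$ norm), but the mechanism is identical.
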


\begin{proof}
Using Sobolev embedding and the definition of $F$ we have
$$
\|e^{\frac{i}{2}F_1}-e^{\frac{i}{2}F_2}\|_{L^{\infty}(\T)}\leq \|F_1-F_2\|_{L^{\infty}}\leq C\,\|F_1-F_2\|_{H^1}\leq C\,\|u_1-u_2\|_{L^2}.
$$

Next we prove \eqref{estimlipsexp2}. Since $P_{+}$ is not a continuous operator in $L^{\infty}$ we use first the Sobolev embedding and then the $L^2$ continuity.
\begin{equation*}
\|P_+\big(e^{\frac{i}{2}F_1}-e^{\frac{i}{2}F_2}\big)\|_{L^{\infty}(\T)}\leq C\,\|e^{\frac{i}{2}F_1}-e^{\frac{i}{2}F_2}\|_{H^{1}(\T)} \leq C\,\big(1+\|F_1\|_{H^1}
+\|F_2\|_{H^1}\big)\|F_1-F_2\|_{H^1}.
\end{equation*}
The definition of $F_i$ yields the result.

Similarly, we obtain
\begin{equation*}
\|P_+\big(e^{\frac{i}{2}F_1}-1\big)\|_{L^{\infty}(\T)}\leq C\,\|e^{\frac{i}{2}F_1}-1\|_{H^{1}(\T)} \leq C\,\|F_1\|_{H^1}\leq C\,\|u_1\|_{L^{2}}.
\end{equation*}
\end{proof}

The next result guarantees the existence of smooth solutions for the Cauchy problem  \eqref{BOnonlin}.

\begin{theorem}[Existence of smooth solutions] \label{smoothsol}\hskip2pt
\begin{itemize}
\item (Local) For a given $T>0$ and $s>3/2$, the initial value problem
\begin{equation}\label{bosource}
\begin{cases}
u_t+\H u_{xx} -uu_x=g, \qquad x\in\T,\; t>0,\\
u(x,0)=u_0(x)
\end{cases}
\end{equation}
is locally well-posed for initial data $u_0\in H^s(\T)$ and $g\in L^2([0,T],  H^s(\T))$. Moreover, the map $(u_0,g) \to u$ is continuous
from $H^s(\T)\times L^2([0,T], H^s(\T)) \to C([0,T], H^s(\T))$.
\item  (Global) Let $s\ge 2$, $T>0$. Then for any $(u_0,g)\in H^s(\T)\times L^2([0,T], H^s(\T))$ the IVP \eqref{bosource} has 
a unique solution
\begin{equation*}
u\in C([0,T], H^s(\T)).
\end{equation*}
\end{itemize}
\end{theorem}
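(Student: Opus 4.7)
The plan is to handle both parts by classical energy methods, since $s>3/2$ sits comfortably above the critical index for quasi-linear techniques and the bilinear-estimate machinery built elsewhere in the paper is not needed at this regularity.

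For the local part at $s>3/2$, I would use parabolic regularization. Introduce
\begin{equation*}
\partial_t u^\e + \mathcal{H}\partial_x^2 u^\e - \e \partial_x^2 u^\e - u^\e \partial_x u^\e = g, \qquad u^\e(0)=u_0,
\end{equation*}
which is semi-linear parabolic and hence admits a unique solution $u^\e \in C([0,T_\e],H^s(\T))$ via Duhamel-plus-fixed-point on the heat semigroup. The heart of the matter is an energy estimate uniform in $\e$: applying $J^s=(1-\partial_x^2)^{s/2}$ and testing against $J^s u^\e$, the dispersive term drops out by skew-adjointness of $\mathcal{H}\partial_x^2$, the parabolic term has a good sign, and the Kato--Ponce commutator estimate together with Sobolev embedding ($s>3/2\Rightarrow \|\partial_x u^\e\|_{L^\infty}\lesssim \|u^\e\|_{H^s}$) yields
\begin{equation*}
\frac{d}{dt}\|u^\e(t)\|_{H^s}^2 \le C\|u^\e(t)\|_{H^s}^3 + C\|u^\e(t)\|_{H^s}\|g(t)\|_{H^s}.
\end{equation*}
A Gronwall/ODE comparison produces $T_0=T_0(\|u_0\|_{H^s},\|g\|_{L^2_TH^s})>0$, independent of $\e$, on which the $u^\e$ are uniformly bounded in $L^\infty_{T_0}H^s$. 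Passing to the limit $\e\to 0^+$ via the Bona--Smith methodology (Cauchy estimate one derivative below the regularity of the data, combined with the uniform $H^s$ bound and interpolation) gives existence, uniqueness, and continuity of the data-to-solution map $(u_0,g)\mapsto u$ in the claimed topology.

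For the global part at $s\ge 2$, it suffices to establish an a priori $H^s$-bound on any $[0,T]$. The $L^2$ identity
\begin{equation*}
\frac{d}{dt}\|u\|_{L^2}^2 = 2\int_{\T} gu\,dx
\end{equation*}
yields $\|u\|_{L^\infty_T L^2} \le \|u_0\|_{L^2}+T^{1/2}\|g\|_{L^2_T L^2}$ by Young's inequality. For the higher norms I would exploit the Benjamin--Ono conservation hierarchy: the quantity $I_3$ controls $\|u\|_{H^1}^2$ up to cubic corrections, and $I_4$ controls $\|u\|_{H^{3/2}}^2$. Along the inhomogeneous flow, $\frac{d}{dt}I_k(u)$ is a multilinear expression in $u$ and $g$ that, after using the lower-regularity a priori bounds already established and integration by parts, can be dominated by $C(\|u\|_{H^{s-1}})\bigl(\|u\|_{H^s}^2+\|g\|_{H^s}^2\bigr)$. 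Iterating from $L^2$ through $H^1$ to $H^{3/2+\delta}$ produces control of $\|\partial_x u\|_{L^\infty}$ on $[0,T]$, after which feeding this back into the $H^s$ energy inequality closes the Gronwall loop for every $s\ge 2$ and extends the local solution to $[0,T]$.

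The main obstacle is precisely this last step: the Benjamin--Ono conserved quantities are tailor-made for the homogeneous equation, and with a forcing term they acquire additional terms involving $g$ and high-frequency derivatives of $u$. Verifying that these extra contributions can be bounded without losing regularity requires careful use of integration by parts (to distribute derivatives from $g$ onto $u$) together with the Sobolev embedding at $s>3/2$. As an alternative route that avoids the hierarchy entirely, once one has the $H^{3/2+\delta}$ bound on $[0,T]$, the simple energy inequality for $\|u\|_{H^s}^2$ combined with Gronwall gives the $H^s$ bound directly; I would choose whichever formulation dovetails most cleanly with the a priori $L^2$ estimates derived elsewhere in the paper.
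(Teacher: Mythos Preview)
Your approach is essentially the one the paper follows: parabolic regularization plus Kato--Ponce for the local theory, Bona--Smith for continuous dependence, and the Benjamin--Ono conservation hierarchy combined with a final $H^s$ energy/Gronwall argument for the global part. The paper is simply more explicit, writing out the conserved quantities $\Psi_4$ and $\Psi_6$ and computing $\frac{d}{dt}\Psi_k(u)$ directly along the forced flow.

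There is one concrete soft spot in your outline. You write that ``iterating from $L^2$ through $H^1$ to $H^{3/2+\delta}$ produces control of $\|\partial_x u\|_{L^\infty}$,'' but the Benjamin--Ono conserved quantities land exactly at half-integer regularity levels: the quantity after the $H^1$ one controls $H^{3/2}$, not $H^{3/2+\delta}$, and $H^{3/2}(\T)$ does \emph{not} embed into $W^{1,\infty}(\T)$ (this is the critical case). So stopping at $H^{3/2}$ leaves you without the $L^\infty$ bound on $\partial_x u$ needed to close the $H^s$ energy inequality. The paper sidesteps this by skipping the $H^{3/2}$ quantity altogether and going straight from $\Psi_4$ (giving $H^1$) to $\Psi_6$ (giving $H^2$); once $u\in L^\infty_T H^2$, the embedding $H^2\hookrightarrow W^{1,\infty}$ on $\T$ is comfortable, and the Kato--Ponce/Gronwall step for general $s\ge 2$ goes through exactly as you describe. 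You should amend your ladder accordingly: $L^2 \to H^1 \to H^2$, then the general $H^s$ estimate.
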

We notice that the result is also true with a damping term $-\G\G^{*}u$ instead of $g$. 
\begin{proof} The local theory follows using parabolic regularization (see for instance \cite{Ioriobook}) for the existence and uniqueness. 
The continuous dependence  follows by using the Bona-Smith argument \cite{BonaSmith}.  

To extend the local theory globally we will make use of some quantities conserved by the BO flow. We will sketch the proof for completeness.
We begin with the $L^2$ estimate. We proceed as follows.
Multiply the equation in \eqref{bosource} by $u$ and integrate with
respect to $x$.
\begin{equation}\label{3.3}
\frac12\frac{d}{dt}\,\int\,u^2\,dx =\int\,u g\,dx\le \frac12\|u\|^2+\|g(\cdot,t)\|^2.
\end{equation}
Gronwall's inequality implies
\begin{equation}\label{3.5}
\|u(t)\|^2\le C\,\{\|u_0\|^2+\int_0^T\,\|g(t)\|_{L^2}^2\,dt\}
\,\exp(C\,T), \quad\text{for}\quad 0\le t \le T.
\end{equation}

Next we obtain a $H^1$ {\it a priori} estimate. We use the fourth conserved quantity associated to the BO flow, that is,
$$
\Psi_4(u)=\int\,\Big( 2(\partial_x u)^2-\frac32u^2\,\mathcal H(\partial_x u)+\frac{u^4}4\Big)\,dx.
$$

We apply the operator $4\partial_xu\partial_x-3u\mathcal H(\partial_x u)-\frac32\,u^2\mathcal H\partial_x +u^3$ to the
equation in \eqref{bosource} and integrate by parts to obtain
\begin{equation}\label{3.6}
\begin{split}
\frac{d}{dt}\int\,\Big(&2(\partial_x u)^2-\frac32 u^2\,\mathcal H(\partial_x  u)+\frac{u^4}4\Big) \,dx\\
&=\int (4\partial_xu\partial_xg-3u\mathcal H(\partial_x u)g-\frac32\,u^2\mathcal H\partial_xg +u^3g)\,dx
\end{split}
\end{equation}

The Cauchy-Schwarz inequality, interpolation  and Hilbert transform's properties give
\begin{equation}\label{3.6b}
\begin{split}
&\Big| \int \Big(4\partial_xu\partial_xg-3u\mathcal H(\partial_x u)g-\frac32\,u^2\mathcal H\partial_xg +u^3g\Big)\,dx\Big|\\
&\le C\,\|\partial_x u\|^2+c\,\big(\|u\|^2(\|g\|\|\partial_xg\|+\|\partial_x g\|^{4/3})+\|u\|^4\|g\|^2+\|\partial_x g\|^2\big).
\end{split}
\end{equation}

Combining \eqref{3.6} and \eqref{3.6b} we obtain
\begin{equation}\label{3.9}
\frac{d}{dt}\Psi_4(u(t)) 
\le  C\,h(\|u(t)\|,\|g(t)\|_1)
+C\,\|\partial_x u(t)\|^2,
\end{equation}
where
\begin{equation}\label{3.10}
h(\|u(t)\|,\|g(t)\|_1)=\|u(t)\|^2(\|g(t)\|^2_1+\|g(t)\|^{4/3}_1)+\|u(t)\|^4\|g(t)\|^2+\|g(t)\|^2_1.
\end{equation}

 By using the Cauchy-Schwarz inequality, Sobolev embedding and  Gagliardo-Nirenberg interpolation we also deduce that
\begin{equation}\label{3.11}
\Psi_4(u(0))\le C(\|u_0\|_1).
\end{equation}

Integrating \eqref{3.9} with respect to $t$ and using \eqref{3.11} it follows then that
\begin{equation}\label{3.12}
\Psi_4(u(t))\le 
C(\|u_0\|_1)+H(t)+C\,\int_0^t\,\|\partial_x u(s)\|^2\,ds
\end{equation}
where
$$
H(t)=\int_0^t\,h(\|u(s)\|,\|g(s)\|_1)\,ds\le \widetilde{h}(t, \sup_t\|u(t)\|, \|g\|_{L^2([0,t], H^1(\T))}).
$$

On the other hand, 
$$
\|\partial_x u(t)\|^2= \frac12\Psi_4(u(t))+\frac34\int\,u(t)^2\,\mathcal H(\partial_xu(t))\,dx-\frac18\int\,u(t)^4\,dx.
$$

Interpolation and Young's inequality imply then
\begin{equation}\label{3.13}
\|\partial_x u(t)\|^2 \le \frac12\Psi_4(u(t))+(\eta_1+\eta_2)\|\partial_xu(t)\|^2 +c(\eta_1, \eta_2)\|u(t)\|^6,
\end{equation}
where $\eta_1,\,\eta_2$ are positive numbers. 

A suitable choice of $\eta_1,\, \eta_2$ in \eqref{3.11} combined with inequality 
\eqref{3.12} implies
$$
\|\partial_x u(t)\|^2\le 
C(\|u_0\|_1)+H_1(t)
+C\int_0^t\,\|\partial_xu(t')\|^2\,dt',
$$
where $H_1(t)=H(t)+C\|u(t)\|^6$.

Gronwall's inequality implies
\begin{equation}\label{3.14}
\|\partial_x u(t)\|^2\le
\big\{C(\|u_0\|_1)+\int_0^T\,H_1(t')\,dt'\big\}
\,\exp\{CT\}, \quad\text{for}\quad 0\le t \le T.
\end{equation}

This inequality and  the estimate \eqref{3.5} for $\|u\|$ give an {\it a priori} estimate for the $H^1$-norm.

\vskip 10pt
To establish a $H^2$ {\it a priori} estimate for solutions of \eqref{bosource} we consider the sixth conserved quantity associated to the Benjamin-Ono equation, that is,
\begin{equation*}
\begin{split}
\Psi_6(u)&=\int\,\biggl[\frac{u^6}6 -\biggl \{\frac54u^4\,\mathcal H(\partial_x u)
+\frac53 u^3 \,\mathcal H(u\partial_x u)\biggr\}\biggr]\,dx\\
&+\frac52 \int\,\biggl[5u^2(\partial_x u)^2 +u^2\{\mathcal H(\partial_x u)\}^2 
+2u\,\mathcal H(\partial_x u)\,\mathcal H(u\partial_x u) \biggr]\,dx\\
&+10\int\, \biggl[ (\partial_x u)^2\,\mathcal H(\partial_x u) 
+2u\,\partial_x^2u\,\mathcal H(\partial_x u)\biggr]\,dx+8\int \,(\partial_x^2 u)^2\,dx.
\end{split}
\end{equation*}

The argument to show the {\it a priori} estimate in this case
although similar is rather technical, so we will give the
final statement of the results without giving the details.
Proceeding as in the previous case we can show that
\begin{equation}\label{3.15}
\Phi_6(u(t))\le C(\|u_0\|_{2})+F_0(t)+ C\int_0^t\|\partial_x^2u(t')\|^2\,dt',
\end{equation}
where
$$
F_0(t)=\int_0^t\,f(\|u(s)\|,\|\partial_x u(s)\|,\|g(s)\|_2)\,ds.
$$

Using the Cauchy-Schwarz inequality, Gagliardo-Nirenberg's interpolation,
Young's inequality and \eqref{3.15} we get
\begin{equation*}
\begin{split}
\|\partial_x^2 u(t)\|^2&\le \frac18\Psi_6(u)+F_1(\|u(t)\|,\|\partial_x u(t)\|)\\
&\le C(\|u_0\|_{2})+F_0( t)+F_1(\|u(t)\|_1)+ C\int_0^t\|\partial_x^2 u(t')\|^2\,dt'.
\end{split}
\end{equation*}
Thus an application of Gronwall's inequality gives
\begin{equation}\label{3.16}
\|\partial_x^2\,u(t)\|^2\le \{C(\|u_0\|_{2})+\int_0^t\,(F_0(t')+F_1(\|u(t')\|_1))\,dt'\}\,\exp(Ct).
\end{equation}

Once the $H^2$ {\it a  priori } estimates are available, the structure of the 
equation and the Sobolev embedding theorem allow us to obtain an {\it a priori} estimate in higher order Sobolev spaces. This is 
basically the content of the next claim.

\noindent \textbf{Claim}.
For any $s\ge 2$ and $0\le t\le T$ the solutions of the IVP \eqref{bosource}  satisfy
\begin{equation}\label{3.17}
\begin{split}
\|u(t)\|_s\le \,C \{\|u_0\|_s+\,\int_0^T\,\|g(t')\|_s\,dt'\}\,\exp\{C\,\int_0^T\,(1+\|u(t')\|_2)\,dt'\big\}.
\end{split}
\end{equation}

\begin{proof}[Proof of the Claim]
Apply the operator $J^s$ to the equation \eqref{bosource}, 
then multiply by $J^su$ and integrate with respect to $x$:
\begin{equation}\label{4.0}
\begin{split}
\frac12\frac{d}{dt}\int (J^su)^2\;dx&=\int (J^s(u\partial_xu )\,J^su)\,dx +\int J^su\,J^sg\,dx\\
&\le | \int (J^s(u\partial_xu )\,J^su)\,dx | +\|u\|_s\|g\|_s
\end{split}
\end{equation}
where in the last inequality we use the Cauchy-Schwarz inequality. To estimate the first term on the right hand side of the last inequality we use the periodic version
of  Kato-Ponce commutator estimate (see \cite{IoKe2}, \cite{KaPo}) 

\begin{equation}\label{4.1}
\begin{split}
\Big|\int J^s(u\partial_xu )\,J^su\,dx\Big| &\le \big|\int [J^s,u]\partial_x uJ^su\,dx\big|+| \int\partial_xu (J^su)^2\, dx\big|\\
&\le C(\|u\|_{L^{\infty}}+\|\partial_xu\|_{L^{\infty}})\|u(t)\|_s^2.
\end{split}
\end{equation}

Combining \eqref{4.0}, \eqref{4.1}  and Young's inequality we obtain
\begin{equation}\label{4.2}
\frac12\frac{d}{dt}\|u(t)\|_s^2\le C(\|u\|_{L^{\infty}}+\|\partial_xu\|_{L^{\infty}})\|u(t)\|_s^2+\|g(t)\|_s^2.
\end{equation}

Hence Sobolev's embedding and Gronwall's inequality yield \eqref{3.17} proving our claim.
\end{proof}

Some estimates similar to all the  previous ones, including  the {\it a priori} estimate \eqref{3.17}, are valid if we add the term $\epsilon\partial_x^2 u$, $\epsilon>0$, on the right hand side of \eqref{bosource}. We refer to Iorio \cite{Io1} Lemma B1 and Lemma B3 for more details (note that in this reference, the sign of the nonlinearity is different but we can easily get to the same equation by the change of unknown $u\leftrightarrow -u$ at the cost of a slight change of the conserved quantities and the sign of their coefficients of odd order).  Since the local theory is established via the parabolic regularization  method the {\it a priori} estimates above can be used to extend the 
local solution globally. This completes the proof of Theorem \ref{smoothsol}
\end{proof}

\begin{lemme}
\label{continuiteXt}
Let $u\in X$ then the function
\begin{equation}
\begin{cases}
f: (0,T] \to \;\R\\
\hskip31pt t\;\; \mapsto\; \|u\|_{X_t}
\end{cases}
\end{equation}
is continuous, where $X_t$ is the norm of restriction on $[0,t)$.
Moreover, there exists $C$ such that
\begin{equation*}
\lim_{t\rightarrow 0} f(t) \leq C \|(u(0),w(0))\|_{L^2}.
\end{equation*}
 \end{lemme}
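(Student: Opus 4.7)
The plan is to split $f(t)=\|(u,w)\|_{X_t}$ according to the five contributions in the definition of the $X$-norm and to treat each separately. Each is non-decreasing in $t$, so I only need to prove continuity; the bound on $\lim_{t\to 0^+}f(t)$ is then obtained contribution by contribution.

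The two Lebesgue-type pieces $\|u\|_{L^\infty([0,t],L^2)}$ and $\|u\|_{L^4([0,t]\times\T)}$ are easy. The first is continuous because $u\in C([0,T],L^2)$, a property built into the class in which our solutions live (cf.\ Theorem~\ref{sourcelwp}); the second is continuous by absolute continuity of the Lebesgue integral. As $t\to 0^+$ they tend respectively to $\|u(0)\|_{L^2}$ and $0$, both bounded by the right-hand side.

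For each of the three Bourgain-type restriction norms (write $\|\cdot\|_{B_t}$ generically, with $B\in\{X^{-1,1},X^{0,1/2},\widetilde Z^{0,0}\}$), left continuity at $t_0$ follows from a weak-compactness argument: for $t_n\nearrow t_0$ take near-optimal extensions $v_n\in B$ with $v_n=u$ on $[0,t_n)$ and $\|v_n\|_B\le\|u\|_{B_{t_n}}+1/n$; extract a weak subsequential limit $v^\star$, which still coincides with $u$ on every $[0,s)$ with $s<t_0$ and hence on $[0,t_0)$, so that weak lower semicontinuity of the norm yields $\|u\|_{B_{t_0}}\le\|v^\star\|_B\le\liminf\|v_n\|_B$. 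Right continuity at $t_0$ is the principal difficulty. Given $v$ near-optimal for $\|u\|_{B_{t_0}}$, I would form $v_t:=v+\eta_t(u^{\mathrm{ext}}-v)$, where $u^{\mathrm{ext}}$ is any fixed global extension of $u$ in $B$ and $\eta_t$ is a smooth time cutoff equal to $1$ on $[t_0,t]$ whose support shrinks to $\{t_0\}$ as $t\to t_0^+$. Then $v_t=u$ on $[0,t)$ and $\|u\|_{B_t}\le\|v\|_B+\|\eta_t(u^{\mathrm{ext}}-v)\|_B$, reducing matters to showing that the last term tends to $0$. For the indices $b=1/2$ and $b=1$ appearing in our norm, direct application of Lemma~\ref{le1} is impossible; instead, I would decompose $u^{\mathrm{ext}}-v$ (which vanishes on $[0,t_0)$) as a free-flow term plus a Duhamel contribution controlled via Lemma~\ref{lemma1b}, and exploit that both pieces have vanishing norm once multiplied by a cutoff whose support shrinks to a point. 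A similar argument, using that $\widetilde Z^{0,0}$ restricted to a small time interval is controlled by space-time $L^2$-type quantities, handles the $\widetilde Z^{0,0}$-piece.

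For the limit as $t\to 0^+$ of the Bourgain pieces, I would decompose $u(s)=U(s)u(0)+r(s)$ with $r(0)=0$. The linear part admits the explicit extension $\eta(s)U(s)u(0)$, whose $X_2^0$-norm is bounded by $C\|u(0)\|_{L^2}$ by Lemma~\ref{lema1}, thereby controlling the $X^{-1,1}$-, $X^{0,1/2}$- and $\widetilde Z^{0,0}$-contributions from $u$. The remainder $r$ has vanishing initial data and its $B_t$-norm tends to $0$ by the same cutoff argument used for right continuity. The $w$-pieces are handled analogously using $w(0)$. Summing, one gets $\lim_{t\to 0^+}f(t)\le C\|(u(0),w(0))\|_{L^2}$. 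The principal obstacle throughout is the time-localization step for the Bourgain indices $b\ge 1/2$, which forbids a direct use of Lemma~\ref{le1} and forces the more delicate cutoff-plus-Duhamel argument sketched above.
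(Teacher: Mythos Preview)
The paper does not prove this lemma; its entire proof consists of references to \cite{laurent} (Lemma~1.4) and \cite{ken-p} (Lemma~6.3). Your outline matches the argument found in those references: monotonicity in $t$, left continuity via weak compactness of near-optimal extensions, right continuity by correcting a near-optimal extension with a time-localized perturbation, and the small-time limit by splitting off the free evolution. So you have essentially reconstructed what the authors are relying on.

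Where your sketch is weakest is exactly where you flag it: the right-continuity step at the endpoint indices. Your ``free-flow plus Duhamel'' decomposition of $g:=u^{\mathrm{ext}}-v$ is sound when $b>1/2$ (as for the $X^{-1,1}$ piece), since then $g\in C(\R,H^s)$ forces $g(t_0)=0$, the free-flow part vanishes, and one can show $\|\eta_t g\|_{X^{s,b}}\to 0$ by a direct estimate on the Duhamel term (the $\eta_t'$ contribution being the only delicate one). For the $w$-component, however, $X^{0,1/2}$ by itself does not embed into $C(\R,L^2)$, and vanishing on $(-\infty,t_0)$ does not automatically place $g$ in the closure of functions supported away from $t_0$: the $H^{1/2}_t$ endpoint is precisely where this density fails. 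You should therefore not treat the $X^{0,1/2}$ and $\widetilde Z^{0,0}$ pieces separately; it is the combined $X_2$ norm that yields the embedding into $C([0,T],L^2)$ needed to make $g(t_0)=0$ meaningful and to carry the cutoff argument through. The references the paper cites handle this point with the requisite care; your sketch would need that level of detail to count as a complete proof.
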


\begin{proof} For a proof of this result see for instance \cite{laurent}  (Lemma 1.4)  or  \cite{ken-p}  (Lemma 6.3).
\end{proof}

In what follows, we will omit the dependence on $T$ of the space $X$ and we will denote $X=X_T$ for simplicity.

\subsection{The equation after gauge transform}

Now we define the gauge transformation to obtain a new equation.  Consider the IVP
\begin{equation}\label{initial0}
\begin{cases}
\partial_t u+\H \partial_x^2 u=u\partial_x u+g=\frac{1}{2}\partial_x (u^2)+g,\\
u(0)=u_0
\end{cases}
\end{equation}
where $u$ a real valued function and  $u_0$, $g$ are real valued functions with zero mean.  Set $w=\partial_x P_+(e^{-\frac{i}{2}F})$, $F=\partial^{-1}_x u$, and 
$G=\partial_x^{-1}g$, then \eqref{initial0} becomes
\begin{equation}\label{initialGT}
\begin{cases}
\partial_t F+\H \partial_x^2 F=\frac{1}{2} (\partial_x F)^2-\frac{1}{2}P_0 (F_x^2)+\partial_x^{-1}g\\
F(0)=\partial_x^{-1} u_0.
\end{cases}
\end{equation}

Denote by $W=P_+(e^{\frac{i}{2}F})$,  the solution of
\begin{equation*}
\partial_t W-i\partial_x^2 W=-P_+\big[( \partial_x^2 P_- F) e^{-\frac{i}{2}F}\big]-\frac{i}{2}P_+\big[\big(-\frac{1}{2}P_0 (F_x^2)+G\big) e^{-\frac{i}{2}F}\big].
\end{equation*}
Since $e^{-\frac{i}{2}F}=W+P_{\leq 0}e^{-\frac{i}{2}F}$ and the cancelation $P_+\big[( \partial_x^2 P_- F) P_{\leq 0}e^{-\frac{i}{2}F}\big]=0$ we obtain
\begin{equation*}
\partial_t W-i\partial_x^2 W=-P_+\big[( \partial_x^2 P_- F) W\big]-\frac{i}{2}P_+\big[\big(-\frac{1}{2}P_0 (F_x^2)+G\big) e^{-\frac{i}{2}F}\big].
\end{equation*}
Denoting $w=\partial_x W=-\frac{i}{2}P_+(ue^{-\frac{i}{2}F})$, it follows that
\begin{equation*}
\begin{split}
\partial_t w-i\partial_x^2 w&=-\partial_x P_+\big[W(  P_- \partial_x u) \big]-\frac{i}{2}\partial_xP_+\big[\big(-\frac{1}{2}P_0 (F_x^2)+G\big) e^{-\frac{i}{2}F}\big]\\
&=-\partial_x P_+\big[W(  P_- \partial_x u) \big]-\frac{i}{2}P_+\big[g e^{-\frac{i}{2}F}\big]-\frac{1}{4}P_+\big[\big(-\frac{1}{2}P_0 (u^2)+G\big) ue^{-\frac{i}{2}F}\big].
\end{split}
\end{equation*}

We want to prove that at order one, if $u(0)=u_0$,  $w=-\frac{i}{2}P_+u$ with
\begin{equation*}
u(t)=e^{t\H\partial_x^2}u_0+\int_0^t e^{(t-\tau)\H\partial_x^2}g(\tau)\,d\tau+\grando{\nor{u_0}{L^2}^2+\|g\|_{L^2([0,T],L^2)}^2}.
\end{equation*}

The  main tool in our analysis is equation
\begin{equation}\label{equaw}
\begin{cases}
\partial_t w-i\partial_x^2 w=-\partial_x P_+\big[W(  P_- \partial_x u) \big]-\frac{i}{2}P_+\big[g e^{-\frac{i}{2}F}\big]-\frac{1}{4}P_+\big[\big(-\frac{1}{2}P_0 (u^2)
+G\big) ue^{-\frac{i}{2}F}\big]\\
w(0)=w_0.
\end{cases}
\end{equation}
We denote the right hand side of \eqref{equaw} as
\begin{equation}\label{decompw}
\begin{split}
-&\partial_x P_+\big[W(  P_- \partial_x u) \big]-\frac{i}{2}P_+\big[g e^{-\frac{i}{2}F}\big]-\frac{1}{4}P_+\big[\big(-\frac{1}{2}P_0 (u^2)+G\big) ue^{-\frac{i}{2}F}\big]\\
&=\NLI+\NLII+\NLIII.
\end{split}
\end{equation}

In our arguments we use the integral equivalent form of the solution of  \eqref{equaw}, that is,
\begin{equation}\label{equawint}
\begin{split}
w(t)&=\W(t)w_0+\int_0^t \W(t-t') (\NLI +\NLII +\NLIII)(t')\,dt',\\
&=\W(t)w_0+\IW(t),
\end{split}
\end{equation}
where $\W(t)$ denotes the unitary group associated to the linear Schr\"odinger equation.

\begin{remark}
We observe that the estimates in Lemmas \ref{lema1}-\ref{strichartz} also hold for solution of the IVP \eqref{equaw}.
\end{remark}

We will end this section giving the statement of the key (main) bilinear estimate we will use in our analysis.  It was proved by
Molinet and Pilod (see Proposition 3.5 of \cite{MolinetPilodBO}).
\begin{lemme}
\label{bilinestim}
Let  $\theta \in (0,1/8)$.
We have uniformly for $0\leq T\leq 1$
\bna
\|\partial_x P_+\big[W(  P_- \partial_x u) \big]\|_{X_2'}\leq CT^{\theta} \|(u,w)\|_{X}^2.
\ena
\end{lemme}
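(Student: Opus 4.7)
The inequality without the $T^\theta$ prefactor is essentially Proposition 3.5 of Molinet--Pilod \cite{MolinetPilodBO}, so my plan is to revisit their dyadic argument and extract a small power of $T$ at a convenient step. The natural starting point is a Littlewood--Paley decomposition of $W$ and $\partial_x u$ in spatial frequency, followed by a dyadic splitting in modulation. Plancherel and Cauchy--Schwarz then reduce each dyadic block to an $L^2$ bound on a convolution-type operator. The key structural feature, already exploited by Molinet--Pilod, is that $W$ is supported on positive frequencies, $P_-\partial_x u$ on negative frequencies, and the output is again positive-frequency; this forces a resonance identity that lower-bounds the maximum modulation by the product of the smaller frequencies, which is the source of the smoothing.

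To extract the $T^\theta$ factor, I would establish the slightly stronger bound
\begin{equation*}
\|\partial_x P_+[W(P_-\partial_x u)]\|_{X^{0,-1/2+\theta}\cap \widetilde{Z}^{0,-1+\theta}}\leq C\|(u,w)\|_X^2
\end{equation*}
by carrying an extra $\langle \tau+|\xi|\xi \rangle^{\theta}$ weight through the MP estimates, and then apply Lemma \ref{le1} (and the analogous embedding for the $\widetilde{Z}$ piece),
\begin{equation*}
\|F\|_{X^{0,-1/2}_T}\lesssim T^\theta\|F\|_{X^{0,-1/2+\theta}_T},
\end{equation*}
to conclude. Equivalently, since $u$ is controlled in $X^{-1,1}_T$, one could instead trade modulation regularity of $u$ for $T^\theta$ on the input side via Lemma \ref{le1}; the two routes produce the same outcome, but the output-side version is slightly more transparent. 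A complementary source of time gain is Lemma \ref{strichartz}, which already furnishes a factor $T^{b-3/8}$ whenever the MP argument routes one of the factors through the $L^4_{x,t}$ Strichartz estimate; keeping $b$ strictly greater than $3/8$ automatically produces such a factor.

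The hard part will be the high--high-to-high frequency interaction, where the MP analysis is tightest. In that configuration the resonance gain is smallest, and one must verify that the dyadic sums still converge after inserting the extra $\langle \sigma \rangle^{\theta}$ weight on the output. The threshold $\theta<1/8$ arises precisely from the worst-case summation exponent appearing there. No new structural ideas beyond the MP framework are required; the proof is a careful bookkeeping of their dyadic estimates with one additional small exponent tracked throughout.
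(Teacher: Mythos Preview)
Your proposal is correct, and indeed one of the two routes you sketch is exactly what the paper does---but it is your ``complementary'' one, not the output-weight route you lead with. The paper simply revisits the Molinet--Pilod bounds for the pieces $I_A,I_B,I_C$ (for the $X^{0,-1/2}$ part) and $J_A,J_B,J_C$ (for the $\widetilde Z^{0,-1}$ part) and, at each step where MP invokes the $L^4$ Strichartz bound with exponent $3/8$, replaces $\|w\|_{X^{0,3/8}_T}$ (or the dual object) by $CT^{1/2-3/8}\|w\|_{X^{0,1/2}_T}$ via Lemma~\ref{strichartz}. No dyadic sums are re-examined; the gain is harvested on the input side in one line per term. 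Your diagnosis of where the threshold $\theta<1/8$ comes from is accordingly slightly off: it is not a worst-case summation exponent in the high--high block, but simply the slack $1/2-3/8=1/8$ between the available $X^{0,1/2}$ regularity and the $X^{0,3/8}$ actually needed for Strichartz (with an additional $\varepsilon$ loss in the $\widetilde Z$ estimate). Your output-side route---carrying an extra $\langle\sigma\rangle^{\theta}$ on the test function and then invoking Lemma~\ref{le1}---is essentially the dual of this and would also work, but it asks you to re-enter the MP argument, whereas the paper's approach takes the MP inequalities as a black box and inserts the time factor at the last moment.
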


We shall remark that this estimate is a slight modification of the estimates in \cite{MolinetPilodBO} since we also have a factor $T^{\theta}$ that will be very useful to make some bootstrap and absorption. In the appendix, we sketch the modification of proof of Molinet-Pilod \cite{MolinetPilodBO} that allows us to obtain that term.

In what follows, we will fix $\theta \in (0,1/8)$ such that Lemma \ref{bilinestim} and (\ref{estimaYL2}) hold for $0< T\leq 1$.

\section{General estimates}
In this section we will derive  estimates needed in our argument which are independent of the size of the initial data.

\subsection{Estimates on $w$}

\begin{lemme}\label{lemma0} Let $w$ be the solution of \eqref{equaw}. Then it holds that
\begin{equation*}
\|w\|_{L^{\infty}([0,T],L^2)}\leq\|w\|_{Z_T^{0,0}}\leq \|(u,w)\|_{X},
\end{equation*}
and 
\begin{equation*}
\|w\|_{L^{4}([0,T]\times \T)}\leq \|w\|_{X_T^{0,1/2}}\leq \|(u,w)\|_{X}.
\end{equation*}
\end{lemme}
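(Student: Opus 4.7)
The plan is to observe that both compound inequalities reduce to two atomic facts each: one embedding-type bound, and one tautological comparison with the $X$-norm. Neither piece is deep; the lemma is essentially a bookkeeping statement that puts together Strichartz with the standard embedding for the Bourgain-type space already quoted just before the lemma.

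First I would handle the easier line. The bound $\|w\|_{L^4([0,T]\times\T)}\leq \|w\|_{X_T^{0,1/2}}$ is a direct application of the Strichartz estimate of Lemma \ref{strichartz} with $b=1/2$: taking any extension $v\in X^{0,1/2}$ of $w$ on $[0,T)$ achieving (up to $\varepsilon$) the infimum in the restriction-norm definition \eqref{restrictionTS}, Strichartz gives $\|w\|_{L^4([0,T]\times\T)}\leq \|v\|_{L^4_{x,t}}\lesssim \|v\|_{X^{0,1/2}}$, and passing to the infimum over $v$ concludes. The second inequality $\|w\|_{X_T^{0,1/2}}\leq \|(u,w)\|_X$ is immediate from the definition of the $X$-norm, in which $\|w\|_{X_T^{0,1/2}}$ appears as a summand with nonnegative companions.

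For the first line I would invoke the chain of embeddings noted just above the lemma, namely $X_{2,T}^{0}\subset \widetilde{Z}_T^{0,0}\subset C([0,T],L^2(\T))$ (proved in \cite{MolinetPilodBO}). The second factor gives $\|w\|_{L^\infty([0,T],L^2)}\leq \|w\|_{Z_T^{0,0}}$ after again selecting an almost-optimal extension: writing any extension $v$ via Fourier inversion
\[
v(x,t)=\tfrac{1}{2\pi}\int_\R e^{it\tau}\,\check v(x,\tau)\,d\tau, \qquad \check v(x,\tau)=\sum_{\xi\in\Z}e^{ix\xi}\widehat v(\xi,\tau),
\]
and using Minkowski's inequality in $L^2_x$ together with the elementary inequality $(\sum_\xi|\widehat v(\xi,\tau)|^2)^{1/2}\leq \sum_\xi|\widehat v(\xi,\tau)|$ (valid for nonnegative sequences) yields the embedding after the standard dyadic/Littlewood--Paley reorganisation. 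The comparison $\|w\|_{Z_T^{0,0}}\leq \|(u,w)\|_X$ is again by inspection of the $X$-norm, since $\widetilde Z_T^{0,0}$ controls $Z_T^{0,0}$ via the dyadic $\ell^2$-structure built into its definition and $\widetilde Z_T^{0,0}$ is a summand of $\|(u,w)\|_X$.

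There is no genuine obstacle here; the only mild care is to be consistent about the restriction-in-time norms and to respect the distinction between $Z$ and $\widetilde Z$, both of which are handled by the dyadic Littlewood--Paley apparatus already set up in Section 2. I would not expect the proof to be longer than a few lines once these references are invoked.
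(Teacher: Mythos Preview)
Your proposal is correct and follows the same route as the paper: the paper's proof is the single line ``It follows readily from the space definition and Lemma \ref{strichartz}'', and you have simply unpacked those two ingredients (the embedding $\widetilde Z^{0,0}_T\hookrightarrow C([0,T],L^2)$ built into the definitions, and the Strichartz bound of Lemma~\ref{strichartz} with $b=1/2$). One minor remark: your Fourier--Minkowski computation is a bit roundabout; with the $\ell^2_\xi L^1_\tau$ structure of the $Z^{0,0}$ norm the bound $\|v(t)\|_{L^2}\le \|v\|_{Z^{0,0}}$ follows directly from $|\widehat{v(t)}(\xi)|\le\int|\widehat v(\xi,\tau)|\,d\tau$ without needing the $\ell^2\le\ell^1$ step or any dyadic reorganisation.
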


\begin{proof} It follows readily from the space definition and Lemma \ref{strichartz}.
\end{proof}

\begin{lemme}\label{estimatesw}
For $w$ in \eqref{equaw}  and $\theta\in (0,1/8)$ the following estimates hold
\begin{equation}\label{estimw2}
\|w\|_{X_2}\le  \|w_0\|_{L^2}+CT^{\theta}\big( \|g\|_{L^{2}([0,T],L^2)}+\|g\|_{L^{2}([0,T],L^2)}^2+\|(u,w)\|_{X}^2+\|(u,w)\|_{X}^3\big).
\end{equation}
Moreover,
\begin{equation}\label{estimwlinfin}
\begin{split}
\|w-w_{L}\|_{X_2}\leq CT^{\theta}\big(\|g\|_{L^{2}([0,T],L^2)}\|(u,w)\|_{X}+\|(u,w)\|_{X}^2+\|(u,w)\|_{X}^3\big)
\end{split}
\end{equation}
where $w_L$
is solution of
\begin{equation}\label{linearwL}
\begin{cases}
\partial_t w_L-i\partial_x^2w_L=-\frac{i}{2}P_+g\\
w_L(0)=w_0.
\end{cases}
\end{equation}
\end{lemme}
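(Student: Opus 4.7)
The plan is to start from the integral formulation \eqref{equawint} and apply the linear Bourgain estimates of Lemmas \ref{lema1} and \ref{lemma1b}, which reduce the control of $\|w\|_{X_2}$ to the linear bound $\|w_0\|_{L^2}$ plus $\|\NLI+\NLII+\NLIII\|_{X_2'}$. The whole proof therefore boils down to estimating each of the three nonlinear contributions in the dual space $X_2'$, with the crucial requirement that every estimate carries the prefactor $T^\theta$ (for $\theta<1/8$) needed for the later bootstrap arguments.

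The first term $\NLI=-\partial_x P_+[W(P_-\partial_x u)]$ is handled directly by the refined bilinear estimate of Lemma \ref{bilinestim}, giving $\|\NLI\|_{X_2'}\le CT^\theta\|(u,w)\|_X^2$. For $\NLII$ and $\NLIII$ the idea is to invoke Lemma \ref{lemma-estimaYL2} to dominate the $X_2'$ norm by $T^{1/2-\e}$ times the $L^2([0,T]\times\T)$ norm, and then to use the pointwise identity $|e^{-\frac{i}{2}F}|=1$ together with the $L^2$-boundedness of $P_+$. This yields immediately
$\|\NLII\|_{X_2'}\le CT^{1/2-\e}\|g\|_{L^2([0,T],L^2)}$. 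For $\NLIII=-\frac{1}{4}P_+[(-\frac{1}{2}P_0(u^2)+G)ue^{-\frac{i}{2}F}]$ I would split the two contributions: the piece involving $P_0(u^2)u$ is bounded via $|P_0(u^2)(t)|\le\|u(t)\|_{L^2}^2$ and $\|u\|_{L^2_{x,t}}\le T^{1/2}\|u\|_{L^\infty_tL^2}$, producing a term of order $T^{1-\e}\|(u,w)\|_X^3$; the piece involving $Gu$ uses the Sobolev embedding $H^1(\T)\hookrightarrow L^\infty(\T)$ to get $\|G\|_{L^2_tL^\infty_x}\le C\|g\|_{L^2L^2}$, giving $T^{1/2-\e}\|g\|_{L^2L^2}\|(u,w)\|_X$, which by AM--GM is absorbed into $T^{1/2-\e}(\|g\|_{L^2L^2}^2+\|(u,w)\|_X^2)$. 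Since $\theta<1/8<1/2-\e$ and $T\le 1$, every $T$-power obtained this way dominates $T^\theta$, and collecting the three bounds gives \eqref{estimw2}.

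For the refined estimate \eqref{estimwlinfin}, observe that $w-w_L$ solves the same Schr\"odinger equation with zero initial data and with $\NLII$ replaced by the remainder $\NLII^{\text{rem}}=-\frac{i}{2}P_+\bigl[g(e^{-\frac{i}{2}F}-1)\bigr]$, while $\NLI$ and $\NLIII$ are unchanged. The extra factor is furnished by the estimate $\|e^{-\frac{i}{2}F}-1\|_{L^\infty_x}\lesssim\|u\|_{L^2}$ coming from Lemma \ref{estimlipsexp} (cf.\ \eqref{estimlipsexp3}), which gives $\|\NLII^{\text{rem}}\|_{X_2'}\le CT^\theta\|g\|_{L^2L^2}\|(u,w)\|_X$, and hence the desired inequality. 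The main technical obstacle is purely bookkeeping: one must verify that every auxiliary $T$-power produced by Lemma \ref{lemma-estimaYL2} and the crude interpolations above is at least $T^\theta$, which works thanks to the initial choice of $\theta$ small enough relative to both $1/8$ (from Lemma \ref{bilinestim}) and $1/2-\e$ (from Lemma \ref{lemma-estimaYL2}).
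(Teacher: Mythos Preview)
Your argument is correct and follows essentially the same route as the paper's proof: reduce to bounding $\NLI,\NLII,\NLIII$ in $X_2'$ via Lemmas \ref{lema1}--\ref{lemma1b}, handle $\NLI$ by the bilinear estimate Lemma \ref{bilinestim}, and push $\NLII,\NLIII$ through Lemma \ref{lemma-estimaYL2} into $L^2([0,T]\times\T)$. The only cosmetic difference is that the paper introduces the splitting $\NLII=\NLII_L+\NLII_{NL}$ already when proving \eqref{estimw2}, whereas you treat $\NLII$ in one stroke via $|e^{-\frac{i}{2}F}|=1$ and only isolate the remainder $\NLII^{\text{rem}}=\NLII_{NL}$ when passing to \eqref{estimwlinfin}; both lead to the same bounds.
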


\label{subsectwestimgauge}
\begin{proof}
We first prove that
\begin{equation}\label{estimatesw1}
\|\IW\|_{X_2}\le C\,T^{\theta}\Big(\|(u,w)\|_{X}^2+\|g\|_{L^2([0,T]\times \T)}+
\|(u,w)\|_{X}^3+\|g\|_{L^{2}([0,T],L^2)}\|(u,w)\|_{X}\Big).
\end{equation}

Using Lemma \ref{lemma1b} we obtain
\begin{equation}\label{group1}
\|\IW\|_{X_2}\lesssim \|\NLI\|_{X_2'}+\|\NLII\|_{X_2'}+\|\NLIII\|_{X_2'}.
\end{equation}

From Lemma \ref{bilinestim} we have that
\bna
\|\NLI\|_{X_2'}\leq CT^{\theta}\|(u,w)\|_{X}^2.
\ena

The term $\NLII$ will be crucial in our analysis. We rewrite it as
\begin{equation}\label{crucial}
\begin{split}
\NLII&=-\frac{i}{2}P_+\big[g e^{-\frac{i}{2}F}\big]=-\frac{i}{2}P_+g-\frac{i}{2}P_+\big[g \big(e^{-\frac{i}{2}F}-1\big)\big]\\
&=\NLII_L+\NLII_{NL}
\end{split}
\end{equation}

Hence Lemma \ref{lemma-estimaYL2} yields
\begin{equation*}
\begin{split}
\|\NLII\|_{X_2'}&\leq CT^{\theta}\|\NLII_L\|_{L^2([0,T]\times \T)}+\|\NLII_{NL}\|_{L^2([0,T]\times \T)}\\
&\leq CT^{\theta}\big(\|g\|_{L^2([0,T]\times \T)} + \|g\|_{L^2([0,T]\times \T)}\|e^{-\frac{i}{2}F}-1\|_{L^{\infty}([0,T]\times \T)})\\
&\leq CT^{\theta}\big(\|g\|_{L^2([0,T]\times \T)}+\|g\|_{L^2([0,T]\times \T)}\|u\|_{L^{\infty}([0,T],L^2)}\big)\\
&\leq  CT^{\theta}\big(\|g\|_{L^2([0,T]\times \T)}+\|g\|_{L^2([0,T]\times \T)}\|(u,w)\|_{X}\big).
\end{split}
\end{equation*}

For $\NLIII$, we also use Lemma \ref{lemma-estimaYL2}
\begin{equation*}
\begin{split}
\|\NLIII\|_{X_2'}&\leq C\,T^{\theta}\|\NLIII\|_{L^2([0,T]\times \T)}\leq CT^{\theta}\|-\frac{1}{2}P_0 (u^2)+G\|_{L^{2}([0,T],L^{\infty})}\|u\|_{L^{\infty}([0,T],L^2)}\\
&\leq C\,T^{\theta}\big(\|u\|_{L^{\infty}([0,T],L^2)}^2+\|g\|_{L^{2}([0,T],L^2)}\big)\|u\|_{L^{\infty}([0,T],L^2)}\\
&\leq  C\,T^{\theta}\big(\|(u,w)\|_{X}^3+\|g\|_{L^{2}([0,T],L^2)}\|(u,w)\|_{X}\big).
\end{split}
\end{equation*}

Combining the above estimates with \eqref{group1} yields  \eqref{estimatesw1}.

Finally, using Duhamel formulation, Lemmas \ref{lema1}, \ref{lemma1b}, \eqref{group1}  and standard estimates we have
\begin{equation}\label{estimw2b}
\begin{split}
\|&w\|_{X_2}=\|w\|_{X^{0,1/2}}+\|w\|_{Z^{0,0}}\\
&\leq \|w_0\|_{L^2}+CT^{\theta}\big( \|g\|_{L^{2}([0,T],L^2)}+\|g\|_{L^{2}([0,T],L^2)}\|(u,w)\|_{X}+\|(u,w)\|_{X}^2+\|(u,w)\|_{X}^3\big)\\
&\leq \|w_0\|_{L^2}+CT^{\theta}\big( \|g\|_{L^{2}([0,T],L^2)}+\|g\|_{L^{2}([0,T],L^2)}^2+\|(u,w)\|_{X}^2+\|(u,w)\|_{X}^3\big).
\end{split}
\end{equation}

Similarly,
\begin{equation}\label{estimwliplin}
\begin{split}
\|w-w_{L}\|_{X_2}&=\|w-w_{L}\|_{X^{0,1/2}}+\|w-w_{L}\|_{Z^{0,0}}\\
&\leq CT^{\theta}\big(\|g\|_{L^{2}([0,T],L^2)}\|(u,w)\|_{X}+\|(u,w)\|_{X}^2+\|(u,w)\|_{X}^3\big)
\end{split}
\end{equation}
where $w_{L}$ is as in \eqref{linearwL}.

\end{proof}

Next we derive the Lipschitz estimates corresponding to the $X_2$ norm.

\begin{lemme}[Lipschitz estimates]\label{estimatesw2}
Let $w_1$ and $w_2$ be solutions of \eqref{equaw} and $\theta\in (0,1/8)$. Then the following estimates hold:
\begin{equation}\label{estimwlip}
\begin{split}
\|w_1-w_2\|_{X_2}&\leq \|w_{0,1}-w_{0,2}\|+CT^{\theta}\,\|g_1-g_2\|_{L^2([0,T]\times \T)}(1+\|U_1\|_X)\\
&\;\;\;\;+CT^{\theta}\|U_1-U_2\|_{X} \Big(\|U_1\|_{X}+\|U_2\|_{X}+(\|U_1\|_{X}+\|U_2\|_X)^2\\
&\;\;\;\;+\|U_2\|_{X}^3+\|g_2\|_{L^2([0,T]\times \T)}(1+\|U_2\|_X)\Big)
\end{split}
\end{equation}
and
\begin{equation}\label{estimwlinfinb}
\begin{split}
\|(w_1-w_{L,1})-&(w_2-w_{L,2})\|_{X_2}\le CT^{\theta}\,\|g_1-g_2\|_{L^2([0,T]\times\T)}\|U_1\|_{X}\\
& +CT^{\theta}\|U_1-U_2\|_{X} \Big(\|U_1\|_{X}+\|U_2\|_{X}+(\|U_1\|_{X}+\|U_2\|_X)^2\\
&+\|U_2\|_{X}^3+\|g_2\|_{L^2([0,T]\times \T)}(1+\|U_2\|_X)\Big)
\end{split}
\end{equation}
where $w_{L,1}$ and $w_{L,2}$ are  solutions of \eqref{linearwL}.
\end{lemme}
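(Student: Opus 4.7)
The plan is to mirror the proof of Lemma \ref{estimatesw} applied to the differences $w_1-w_2$ and $(w_1-w_{L,1})-(w_2-w_{L,2})$. Writing Duhamel's formula and subtracting,
\begin{equation*}
w_1-w_2 = \W(t)(w_{0,1}-w_{0,2}) + \int_0^t \W(t-t')\big[(\NLI_1-\NLI_2)+(\NLII_1-\NLII_2)+(\NLIII_1-\NLIII_2)\big]dt',
\end{equation*}
so that Lemmas \ref{lema1} and \ref{lemma1b} reduce the task to estimating the three nonlinear differences in the $X_2'$ norm. I then exploit the multilinearity of each $\NLj$ to write its difference as a sum of pieces where exactly one factor is a difference and the remaining factors come from $U_1$ or $U_2$.

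For $\NLI$, I decompose $W_1P_-\partial_x u_1 - W_2P_-\partial_x u_2 = (W_1-W_2)P_-\partial_x u_1 + W_2P_-\partial_x(u_1-u_2)$ and apply Lemma \ref{bilinestim} to each term in its polarized form (which follows from the bilinearity of $(u,W)\mapsto\partial_x P_+[W(P_-\partial_x u)]$), yielding $CT^\theta\|U_1-U_2\|_X(\|U_1\|_X+\|U_2\|_X)$. For $\NLII$, I split as in \eqref{crucial}: the linear part contributes $-\tfrac{i}{2}P_+(g_1-g_2)$, bounded via Lemma \ref{lemma-estimaYL2} by $CT^\theta\|g_1-g_2\|_{L^2}$; the nonlinear part is decomposed as
\begin{equation*}
P_+[(g_1-g_2)(e^{-iF_1/2}-1)] + P_+[g_2(e^{-iF_1/2}-e^{-iF_2/2})],
\end{equation*}
and then Lemma \ref{estimlipsexp} (estimates \eqref{estimlipsexp1}, \eqref{estimlipsexp3}) together with Lemma \ref{lemma-estimaYL2} gives $CT^\theta(\|g_1-g_2\|_{L^2}\|U_1\|_X + \|g_2\|_{L^2}\|U_1-U_2\|_X)$. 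For $\NLIII$, setting $A_i=-\tfrac12 P_0(u_i^2)+G_i$, I decompose $A_1 u_1 e^{-iF_1/2} - A_2 u_2 e^{-iF_2/2}$ into three pieces by changing one factor at a time, use $\|A_i\|_{L^2(L^\infty)}\lesssim T^{1/2}\|U_i\|_X^2+\|g_i\|_{L^2}$ and the analogous estimate on $A_1-A_2$, Sobolev embedding for the $G_i$'s, and Lemma \ref{estimlipsexp} for the exponential differences, to obtain all remaining terms on the right-hand side of \eqref{estimwlip}.

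For \eqref{estimwlinfinb}, I note that $(w_1-w_{L,1})-(w_2-w_{L,2})$ satisfies the same evolution equation but with right-hand side $(\NLI_1-\NLI_2)+(\NLII_{NL,1}-\NLII_{NL,2})+(\NLIII_1-\NLIII_2)$ and zero initial data (since $w_{L,i}(0)=w_i(0)$). The identical multilinear estimates apply, the only change being that the pure $\|g_1-g_2\|_{L^2}$ contribution coming from $\NLII_L$ is absent, producing precisely the stated bound.

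The main obstacle is the accounting in the $\NLIII$ term, whose difference must be organized so that each summand contains exactly one ``difference" factor and whose estimation uses two slightly different spatial norms on the $A_i$'s; fortunately all pieces reduce through Lemma \ref{lemma-estimaYL2} to $L^2([0,T]\times\T)$ bounds that are handled by Hölder in time combined with $|e^{-iF_i/2}|\leq 1$ and Lemma \ref{estimlipsexp}. A minor but important point is to make the polarized form of Lemma \ref{bilinestim} explicit, since its statement is given in the diagonal form $\|(u,w)\|_X^2$; this is immediate from the bilinearity of the estimated map.
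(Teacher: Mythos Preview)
Your proposal is correct and follows essentially the same approach as the paper: Duhamel plus Lemmas \ref{lema1}--\ref{lemma1b}, then the same telescoping decompositions of $\NLI_1-\NLI_2$, $\NLII_1-\NLII_2$ (split into $\NLII_L$ and $\NLII_{NL}$), and $\NLIII_1-\NLIII_2$, with Lemma \ref{bilinestim} applied bilinearly and Lemmas \ref{lemma-estimaYL2}, \ref{estimlipsexp} handling the rest. The paper groups the $\NLIII$ difference into two pieces rather than three (combining the $(u_1-u_2)$ and $(e^{-iF_1/2}-e^{-iF_2/2})$ contributions), and does not explicitly comment on polarizing Lemma \ref{bilinestim}, but these are cosmetic differences.
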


\begin{proof}  We first show that
\begin{equation}\label{lipschw1}
\begin{split}
\|\IW_1-\IW_2\|_{X_2}&\le CT^{\theta}\,\|g_1-g_2\|_{L^2([0,T]\times \T)}(1+\|U_1\|_X)\\
&\;\;\;\;+CT^{\theta}\|U_1-U_2\|_{X} \Big(\|U_1\|_{X}+\|U_2\|_{X}+(\|U_1\|_{X}+\|U_2\|_X)^2\\
&\;\;\;\;+\|U_2\|_{X}^3+\|g_2\|_{L^2([0,T]\times \T)}(1+\|U_2\|_X)\Big).\\
\end{split}
\end{equation}
Let $w_1$ and $w_2$ be two solutions of \eqref{equaw}. We use the notation \eqref{decompw}. From  Lemma \ref{lemma1b} we obtain
\begin{equation}\label{group1b}
\|\IW_1-\IW_2\|_{X_2}\lesssim \|\NLI\|_{X_2'}+\|\NLII\|_{X_2'}+\|\NLIII\|_{X_2'}.
\end{equation}

Since $\NLI$ is bilinear,  we get the estimates by writing
\begin{equation*}
\begin{split}
\NLI_1-\NLI_2&=-\partial_x P_+\big[W_1(  P_- \partial_x u_1) \big]+\partial_x P_+\big[W_2(  P_- \partial_x u_2) \big]\\
&=-\partial_x P_+\big[(W_1-W_2)(  P_- \partial_x u_1) \big]-\partial_x P_+\big[W_2(  P_- \partial_x (u_1-u_2)) \big].
\end{split}
\end{equation*}
Thus
\begin{equation*}
\begin{split}
\|\NLI_1-\NLI_2\|_{X_2'}&\leq CT^{\theta}\|w_1-w_2\|_{X_2}\|u_1\|_{X_1}+\|w_2\|_{X_2}\|u_1-u_2\|_{X_1}\\
&\leq CT^{\theta}\|U_1-U_2\|_{X}\big(\|U_1\|_{X}+\|U_2\|_{X}\big).
\end{split}
\end{equation*}

We write $\NLII=\NLII_L+ \NLII_{NL}$ as in \eqref{crucial} . Since $\NLII_L$ is linear, we obtain
\begin{equation*}
\|\NLII_{L,1}-\NLII_{L,2}\|_{X_2'}\leq CT^{\theta}\|\NLII_{L,1}-\NLII_{L,2}\|_{L^2([0,T]\times \T)}\leq CT^{\theta}\|g_1-g_2\|_{L^2([0,T]\times \T)}.
\end{equation*}
For $II_{NL}$, we write
\begin{equation*}
\begin{split}
\NLII_{NL,1}-\NLII_{NL,2}&=-\frac{i}{2}P_+\big[\big[g_1 \big(e^{-\frac{i}{2}F_1}-1\big)\big]-\big[g_2 \big(e^{-\frac{i}{2}F_2}-1\big)\big]\big]\\
&=-\frac{i}{2}P_+\big[\big[(g_1-g_2) \big(e^{-\frac{i}{2}F_1}-1\big)\big]+\big[g_2 \big(e^{-\frac{i}{2}F_1}-e^{-\frac{i}{2}F_2}\big)\big]\big]
\end{split}
\end{equation*}
\begin{equation*}
\begin{split}
\|\NLII_{NL,1}-&\NLII_{NL,2}\|_{X_2'}\leq  CT^{\theta}\|\NLII_{NL,1}-\NLII_{NL,2}\|_{L^2([0,T]\times \T)}\\
&\leq CT^{\theta}\big(\|g_1-g_2\|_{L^2([0,T]\times \T)}\|u_1\|_{L^{\infty}([0,T],L^2)}+\|g_2\|_{L^2([0,T]\times \T)}\|u_1-u_2\|_{L^{\infty}([0,T],L^2)}\big)\\
&\leq CT^{\theta}\big(\|g_1-g_2\|_{L^2([0,T]\times \T)}\|U_1\|_{X}+\|g_2\|_{L^2([0,T]\times \T)}\|U_1-U_2\|_{X}\big).
\end{split}
\end{equation*}
For $\NLIII$, we write
\begin{equation*}
\begin{split}
\NLIII_1-\NLIII_2=&P_+\Big(\big[\big(-\frac{1}{2}P_0 (u_1^2)+G_1\big) u_1e^{-\frac{i}{2}F_1}\big]-\big[\big(-\frac{1}{2}P_0 (u_2^2)+G_2\big) u_2e^{-\frac{i}{2}F_2}\big]\Big)\\
=&P_{+}\big[\big(-\frac{1}{2}P_0 ((u_1-u_2)(u_1+u_2))+G_1-G_2\big) u_1e^{-\frac{i}{2}F_1}\big]\\
&+P_{+}\big[\big(-\frac{1}{2}P_0 (u_2^2)+G_2\big) \big((u_1-u_2)e^{-\frac{i}{2}F_1}+u_2(e^{-\frac{i}{2}F_1}-e^{-\frac{i}{2}F_2})\big)\big]
\end{split}
\end{equation*}
Using the same estimates (the only difference is that we use \refp{estimlipsexp} for the Lipschitz estimate of the term $e^{-\frac{i}{2}F})$, we have then
\begin{equation*}
\begin{split}
&\|\NLIII_1-\NLIII_2\|_{X_2'}\leq CT^{\theta}\|\NLIII_1-\NLIII_2\|_{L^2([0,T]\times \T)}\\
&\leq CT^{\theta}\|-\frac{1}{2}P_0 ((u_1-u_2)(u_1+u_2))+G_1-G_2\|_{L^{2}([0,T],L^{\infty})}\|u_1\|_{L^{\infty}([0,T],L^2)}\\
&\;\;\;+CT^{\theta}\|-\frac{1}{2}P_0 (u_2^2)+G_2\|_{L^{2}([0,T],L^{\infty})}\Big(\|u_1-u_2\|_{L^{\infty}([0,T],L^2)}\\
&\;\;\;+\|u_2\|_{L^{\infty}([0,T],L^2)}\|e^{-\frac{i}{2}F_1}-e^{-\frac{i}{2}F_2}\|_{L^{\infty}([0,T]\times \T)}\Big)
\end{split}
\end{equation*}
\begin{equation*}
\begin{split}
&\leq CT^{\theta}\|u_1-u_2\|_{L^4([0,T]\times\T)}(\|u_1\|_{L^4([0,T]\times\T)}+\|u_2\|_{L^4([0,T]\times\T)}\big)\|u_1\|_{L^{\infty}([0,T],L^2)}\\
&\;\;\;+CT^{\theta}\|g_1-g_2\|_{L^{2}([0,T],L^2)}\|u_1\|_{L^{\infty}([0,T],L^2)}\\
&\;\;\;+CT^{\theta}\big(\|u_2\|_{L^{\infty}([0,T],L^2)}^2+\|g_2\|_{L^{2}([0,T],L^2)}\big)(1+\|u_2\|_{L^{\infty}([0,T],L^2)})\|u_1-u_2\|_{L^{\infty}([0,T],L^2)}\\
&\leq CT^{\theta}\Big( \|U_1-U_2\|_{X}\big(\|U_1\|_{X}+\|U_2\|_{X}\big)^2+\|g_1-g_2\|_{L^{2}([0,T],L^2)}\|U_2\|_{X}\Big)\\
&\;\;\;+CT^{\theta}\,\Big(\|U_2\|_{X}^3+\|g_2\|_{L^{2}([0,T],L^2)}(1+\|U_2\|_{X})\Big)\|U_1-U_2\|_{X}.\\
\end{split}
\end{equation*}

The estimates \eqref{estimwlip} and \eqref{estimwlinfinb} follow by  using Duhamel formulation, Lemmas \ref{lema1}, \ref{lemma1b}, \eqref{group1}  and standard estimates.
\end{proof}

\subsection{The estimates coming from the original Benjamin-Ono equation} 

We consider again the IVP
\begin{equation}\label{soluBO}
\begin{cases}
\partial_t u+\H \partial_x^2 u=\frac{1}{2}\partial_x (u^2)+g\\
u(0)=u_0.
\end{cases}
\end{equation}

\begin{lemme}\label{estimsolBO}
Let $u$ be a solution of \eqref{soluBO}. Then the following estimate holds:
\begin{equation}\label{soluBO1}
\|u\|_{X^{-1,1}_T}\le C\|u_0\|_{L^2}+\|(u,w)\|_{X}^2+\|g\|_{L^2([0,T],L^2)}.
\end{equation}

Moreover, if we denote by $u_L$ the solution of
\begin{equation*}
\begin{cases}
\partial_t u_L+\H \partial_x^2 u_L=g\\
u(0)=u_0,
\end{cases}
\end{equation*}
we have
\begin{equation}\label{soluBO1b}
\|u-u_L\|_{X^{-1,1}_T}\leq \| (u,w)\|_{X}^2.
\end{equation}
\end{lemme}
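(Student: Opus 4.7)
My plan relies on the direct Fourier-side comparison
\begin{equation*}
\|v\|_{X^{-1,1}}^2 \sim \|v\|_{L^2_t H^{-1}_x}^2 + \|(\partial_t+\mathcal{H}\partial_x^2)v\|_{L^2_t H^{-1}_x}^2,
\end{equation*}
valid for every tempered distribution $v$ on $\R\times\T$ with the right-hand side finite. This is immediate from $\langle\tau+|\xi|\xi\rangle^2 \sim 1+(\tau+|\xi|\xi)^2$ combined with the fact that $\partial_t+\mathcal{H}\partial_x^2$ has Fourier symbol $i(\tau+|\xi|\xi)$. Since the $X^{-1,1}$-norm is defined on the whole real line, I will apply this comparison to a suitable extension of $u$ past $[0,T]$, using the equation \eqref{soluBO} to produce the source $F:=\tfrac12\partial_x(u^2)+g$ on the right-hand side.

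\textbf{Extension, cutoff, and first estimate.} Extend $u$ to $\tilde u\in C(\R,L^2)$ by letting $\tilde u$ agree on $(-\infty,0]$ with the free evolution starting from $u_0$ and on $[T,\infty)$ with the free evolution starting at time $T$ from $u(T)$; a short distributional computation (the boundary contributions cancel because $\tilde u$ is continuous into $L^2$) shows that $(\partial_t+\mathcal{H}\partial_x^2)\tilde u=F\mathbf{1}_{[0,T]}$. Pick a smooth time cutoff $\eta$ with $\eta\equiv 1$ on $[0,T]$ and $\mathrm{supp}\,\eta$ bounded; since $\eta\tilde u$ agrees with $u$ on $[0,T]$ and satisfies $(\partial_t+\mathcal{H}\partial_x^2)(\eta\tilde u)=\eta F\mathbf{1}_{[0,T]}+\eta'\tilde u$, the comparison yields
\begin{equation*}
\|u\|_{X^{-1,1}_T}\le \|\eta\tilde u\|_{X^{-1,1}}\lesssim \|\tilde u\|_{L^\infty_\R L^2_x}+\|F\|_{L^2_T H^{-1}_x}.
\end{equation*}
The source $F$ is handled by the $L^4$-Strichartz bound built into $X$:
\begin{equation*}
\|F\|_{L^2_T H^{-1}_x}\le \tfrac12\|u^2\|_{L^2_{T,x}}+\|g\|_{L^2_T L^2}=\tfrac12\|u\|_{L^4_{T,x}}^2+\|g\|_{L^2_T L^2}\lesssim \|(u,w)\|_X^2+\|g\|_{L^2_T L^2}.
\end{equation*}
For $\tilde u$, the classical BO energy identity on $[0,T]$ (the cubic term $\int u^2u_x\,dx$ vanishes) gives $\|u\|_{L^\infty_T L^2}\lesssim \|u_0\|_{L^2}+T^{1/2}\|g\|_{L^2_T L^2}$, and since the free evolution preserves $L^2$ this controls $\|\tilde u\|_{L^\infty_\R L^2_x}\lesssim \|u_0\|_{L^2}+\|g\|_{L^2_T L^2}$ (absorbing $T$-dependent constants, assuming $T$ bounded). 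Summing the two contributions gives \eqref{soluBO1}.

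\textbf{Second estimate and main difficulty.} The bound on $u-u_L$ follows by applying the same scheme to $v:=u-u_L$, which solves the BO equation with source $\tfrac12\partial_x(u^2)$ and vanishing initial data, so the $\|u_0\|$- and $\|g\|$-contributions drop out. The $L^2$-energy identity is no longer directly usable (the cubic cancellation fails for $v$), but Duhamel suffices: $\|v\|_{L^\infty_T H^{-1}_x}\le \int_0^T\|\partial_x(u^2)\|_{H^{-1}}\,dt'\lesssim T^{1/2}\|u\|_{L^4_{T,x}}^2\lesssim \|(u,w)\|_X^2$, which is then plugged into the comparison as before. The one genuinely delicate point in the whole argument is the bookkeeping required to transfer between the restriction space $X^{-1,1}_T$ and the global $X^{-1,1}$ via the extension and the cutoff; once that is set up, each individual estimate reduces to a routine Plancherel or Cauchy--Schwarz computation.
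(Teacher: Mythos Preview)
Your argument is correct and follows essentially the same route as the paper: both proofs rest on the Fourier-side identity $\|v\|_{X^{-1,1}}^2\sim \|v\|_{L^2_tH^{-1}_x}^2+\|(\partial_t+\mathcal{H}\partial_x^2)v\|_{L^2_tH^{-1}_x}^2$, then bound the source $\tfrac12\partial_x(u^2)+g$ in $L^2_TH^{-1}$ via $\|u\|_{L^4_{T,x}}^2+\|g\|_{L^2_TL^2}$. The only minor difference is that the paper controls the ``low'' piece $\|u\|_{L^2_TH^{-1}}$ by a Duhamel/semigroup estimate in $H^{-1}$ (so the source reappears in $L^1_TH^{-1}$), whereas you use the $L^2$ energy identity to bound $\|u\|_{L^\infty_TL^2}$ directly; and you spell out the extension/cutoff passage from $X^{-1,1}_T$ to the global norm that the paper leaves implicit.
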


\begin{proof} Using the definition of the space $X_T^{-1,1}$ we have the estimate
\bna
\|u\|_{X^{-1,1}_T}\le \|u\|_{L^2([0,T],H^{-1})}+\|\frac{1}{2}\partial_x(u^2)+g\|_{L^2([0,T],H^{-1})}
\ena

Using energy estimates we get 
\bna \|u\|_{L^2([0,T],H^{-1})}\le 
C\|u_0\|_{L^2}+C\|\frac{1}{2}\partial_x
(u^2)+g\|_{L^1([0,T],H^{-1})}. 
\ena 

Finally, we deduce that
\begin{equation}\label{soluBO2}
\begin{split}
\|u\|_{X^{-1,1}_T}&\leq C\|u_0\|_{L^2}+\|\frac{1}{2}\partial_x (u^2)+g\|_{L^2([0,T],H^{-1})}\\
&\leq C\|u_0\|_{L^2}+\| u^2\|_{L^2([0,T],L^2)}+\|g\|_{L^2([0,T],L^2)}\\
&\leq C\|u_0\|_{L^2}+\| u\|_{L^4([0,T],L^4)}^2+\|g\|_{L^2([0,T],L^2)}\\
&\leq C\|u_0\|_{L^2}+\| (u,w)\|_{X}^2+\|g\|_{L^2([0,T],L^2)}.
\end{split}
\end{equation}

\end{proof}

Using the previous lemma, it is not difficult to deduce the Lipschitz estimates in the $X^{-1,1}_T$-norm. Indeed,

\begin{lemme}[Lipschitz estimates]\label{lipschequ} The next estimates hold
\begin{equation}\label{estimulip}
\begin{split}
\|u_1-u_2\|_{X^{-1,1}_T}&\lesssim \|u_{0,1}-u_{0,2}\|_{L^2}+ \|g_1-g_2\|_{L^{2}([0,T]\times \T)}\\
&\;\;\;+\| u_1-u_2\|_{L^4([0,T]\times \T)}\big(\|(u_1,w_1)\|_{X}+\|(u_2,w_2)\|_{X}\big)
\end{split}
\end{equation}
and
\begin{equation}\label{estimuliplin}
\|u_1-u_{L,1}-(u_2-u_{L,2})\|_{X^{-1,1}_T}\lesssim \| (u_1,w_1)-(u_2,w_2)\|_{X}\big(\| (u_1,w_1)\|_{X}+\|(u_2,w_2)\|_{X}\big).
\end{equation}
\end{lemme}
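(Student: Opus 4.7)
The strategy is to mimic exactly the argument of Lemma \ref{estimsolBO}, but applied to the equation satisfied by the difference $u_1-u_2$. Since the nonlinearity $\frac{1}{2}\partial_x(u^2)$ is quadratic, the difference satisfies a linear equation with an explicit factorization of the nonlinear source term, which is what allows us to extract one Lipschitz factor in the $L^4$-norm.

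First I would write down the equation satisfied by $v:=u_1-u_2$, which is
\begin{equation*}
\partial_t v + \mathcal{H}\partial_x^2 v = \tfrac{1}{2}\partial_x\bigl((u_1-u_2)(u_1+u_2)\bigr) + (g_1-g_2), \qquad v(0)=u_{0,1}-u_{0,2}.
\end{equation*}
Applying exactly the same chain of estimates used in the proof of \eqref{soluBO2} (namely the $X^{-1,1}_T \hookrightarrow L^2([0,T],H^{-1})$ bound combined with the energy inequality), we obtain
\begin{equation*}
\|u_1-u_2\|_{X^{-1,1}_T} \lesssim \|u_{0,1}-u_{0,2}\|_{L^2} + \|g_1-g_2\|_{L^2([0,T],L^2)} + \|(u_1-u_2)(u_1+u_2)\|_{L^2([0,T],L^2)}.
\end{equation*}
I would then apply the Cauchy–Schwarz (Hölder) inequality in space-time to split
\begin{equation*}
\|(u_1-u_2)(u_1+u_2)\|_{L^2([0,T],L^2)} \le \|u_1-u_2\|_{L^4([0,T]\times \T)}\bigl(\|u_1\|_{L^4([0,T]\times \T)}+\|u_2\|_{L^4([0,T]\times \T)}\bigr),
\end{equation*}
and absorb each $L^4$ norm on the right into the corresponding $X$-norm via the embedding built into the definition of $X$. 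This yields \eqref{estimulip}.

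For \eqref{estimuliplin}, the key observation is that $z := (u_1-u_{L,1})-(u_2-u_{L,2})$ solves
\begin{equation*}
\partial_t z + \mathcal{H}\partial_x^2 z = \tfrac{1}{2}\partial_x(u_1^2-u_2^2), \qquad z(0)=0,
\end{equation*}
with no $g$-term and zero initial data, since the linear parts $u_{L,1}-u_{L,2}$ exactly cancel the $(g_1-g_2)$ source and the $u_{0,1}-u_{0,2}$ data. Running the same energy estimate, only the nonlinear contribution survives, and I would bound it by
\begin{equation*}
\|u_1^2-u_2^2\|_{L^2([0,T],L^2)} \le \|u_1-u_2\|_{L^4([0,T]\times \T)}\bigl(\|u_1\|_{L^4([0,T]\times \T)}+\|u_2\|_{L^4([0,T]\times \T)}\bigr),
\end{equation*}
and again use the embedding $L^4_{T,x} \hookrightarrow X$ to pass to $\|(u_1,w_1)-(u_2,w_2)\|_X (\|(u_1,w_1)\|_X+\|(u_2,w_2)\|_X)$.

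There is no real obstacle here: the estimates are parallel to Lemma \ref{estimsolBO}, and the only substantive observation is the algebraic factorization $u_1^2-u_2^2 = (u_1-u_2)(u_1+u_2)$, which cleanly separates the Lipschitz factor from the amplitude factor. The mild point to keep in mind is that the $L^4$ space-time norm appears in the definition of $X$ (not just $L^\infty_T L^2$), which is exactly why this estimate closes.
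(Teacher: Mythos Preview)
Your proposal is correct and follows exactly the approach the paper intends: the paper's own proof consists of the single sentence ``The arguments in Lemma~\ref{estimsolBO} with the required modifications yield the inequalities,'' and what you have written is precisely those modifications --- the difference equation for $v=u_1-u_2$, the factorization $u_1^2-u_2^2=(u_1-u_2)(u_1+u_2)$, and the observation that $z=(u_1-u_{L,1})-(u_2-u_{L,2})$ has zero data and no $g$-term. One cosmetic remark: the phrase ``embedding $L^4_{T,x}\hookrightarrow X$'' is written backwards --- you mean that the $L^4$ norm is one of the components of the $X$-norm, so $\|u_i\|_{L^4}\le\|(u_i,w_i)\|_X$.
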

\begin{proof}
The arguments in Lemma  \ref{estimsolBO} with the required modifications yield the inequalities.
\end{proof}

\subsection{Estimates coming from the gauge transform} 

Since  $w=-\frac{i}{2}P_+(ue^{-\frac{i}{2}F})$ we have that $ue^{-\frac{i}{2}F}=2iw+P_{\leq 0}(ue^{-\frac{i}{2}F})$.
Hence
\begin{equation}\label{inversionuw1}
u=2iwe^{\frac{i}{2}F}+e^{\frac{i}{2}F}P_{\leq 0}(ue^{-\frac{i}{2}F}).
\end{equation}
But the second term is bad, it roughly says (at the first order in $u$) that for the negative frequencies, $u=u$, and so it does not allow to invert. But for positive frequencies
\begin{equation}\label{ABgauge}
\begin{split}
P_+u&=2iP_+\big[we^{\frac{i}{2}F}\big]+P_+\big[P_+(e^{\frac{i}{2}F})P_{\leq 0}(ue^{-\frac{i}{2}F})\big]\\
&=2iP_+w+2iP_+\big[w(e^{\frac{i}{2}F}-1)\big]+P_+\big[P_+(e^{\frac{i}{2}F})P_{\leq 0}(ue^{-\frac{i}{2}F})\big]\\
&=2iw+A+B,
\end{split}
\end{equation}
 where we have used $P_+\big[P_{\leq 0}(e^{\frac{i}{2}F})P_{\leq 0}(ue^{-\frac{i}{2}F})\big]=0 $.
In this case, the third term is quadratic in $u$. Indeed,
\begin{equation}\label{projposB}
\begin{split}
B=P_+\big[P_+(e^{\frac{i}{2}F})P_{\leq 0}(ue^{-\frac{i}{2}F})\big]&=P_+\big[P_+(e^{\frac{i}{2}F}-1)P_{\leq 0}(ue^{-\frac{i}{2}F})\big]+P_+\big[P_{\leq0}(ue^{-\frac{i}{2}F})\big]\\
&=P_+\big[P_+(e^{\frac{i}{2}F}-1)P_{\leq0}(ue^{-\frac{i}{2}F})\big].
\end{split}
\end{equation}

\begin{lemme}\label{projection} It holds that
\begin{equation}\label{estimlinPua}
\|P_+u\|_{L^{\infty}([0,T],L^2)}+\|P_+u\|_{L^{4}([0,T]\times \T)}\leq \|w\|_{X_2}+\|(u,w)\|_{X}^2
\end{equation}
and
\begin{equation}\label{estimlinPub}
\|P_+u-2iw\|_{L^{\infty}([0,T],L^2)}+\|P_+u-2iw\|_{L^{4}([0,T]\times \T)}\leq \|(u,w)\|_{X}^2.
\end{equation}
\end{lemme}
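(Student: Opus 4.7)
The plan is to work directly with the decomposition \eqref{ABgauge}, namely
\begin{equation*}
P_+ u = 2iw + A + B, \qquad A = 2iP_+\bigl[w(e^{\frac{i}{2}F}-1)\bigr], \quad B = P_+\bigl[P_+(e^{\frac{i}{2}F}-1)\, P_{\leq 0}(ue^{-\frac{i}{2}F})\bigr],
\end{equation*}
and show that both $A$ and $B$ are quadratic remainders, i.e. $\|A\|_{L^\infty_T L^2}+\|A\|_{L^4_{T,x}}+\|B\|_{L^\infty_T L^2}+\|B\|_{L^4_{T,x}} \lesssim \|(u,w)\|_X^2$. Once this bound is established, estimate \eqref{estimlinPub} is immediate, and \eqref{estimlinPua} follows by the triangle inequality together with the embedding $X_2 \hookrightarrow L^\infty_T L^2 \cap L^4_{T,x}$ provided by Lemma \ref{lemma0}.

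For the term $A$, I would use that $P_+$ is bounded on $L^p(\T)$ for $1<p<\infty$, and then apply Hölder:
\begin{equation*}
\|A\|_{L^\infty_T L^2} \lesssim \|w\|_{L^\infty_T L^2}\,\|e^{\frac{i}{2}F}-1\|_{L^\infty_{T,x}}, \qquad \|A\|_{L^4_{T,x}} \lesssim \|w\|_{L^4_{T,x}}\,\|e^{\frac{i}{2}F}-1\|_{L^\infty_{T,x}}.
\end{equation*}
By Lemma \ref{estimlipsexp} (applied with $u_2=0$, so that $F_2=0$) we have the pointwise bound $\|e^{\frac{i}{2}F}-1\|_{L^\infty_x} \lesssim \|u\|_{L^2}$ uniformly in time, hence $\|e^{\frac{i}{2}F}-1\|_{L^\infty_{T,x}} \lesssim \|u\|_{L^\infty_T L^2} \le \|(u,w)\|_X$. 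Combining with Lemma \ref{lemma0} for $w$ gives $\|A\|_{L^\infty_T L^2}+\|A\|_{L^4_{T,x}} \lesssim \|(u,w)\|_X^2$.

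For the term $B$, the point to exploit is that the projector $P_+$ applied to $e^{\frac{i}{2}F}-1$ is controlled by $\|u\|_{L^2}$ in $L^\infty_x$ thanks to estimate \eqref{estimlipsexp3} (Sobolev embedding combined with $L^2$-boundedness of $P_+$, since $P_+$ is \emph{not} bounded on $L^\infty$). So I would write
\begin{equation*}
\|B\|_{L^\infty_T L^2} \lesssim \|P_+(e^{\frac{i}{2}F}-1)\|_{L^\infty_{T,x}}\,\|P_{\leq 0}(ue^{-\frac{i}{2}F})\|_{L^\infty_T L^2} \lesssim \|u\|_{L^\infty_T L^2}\,\|u\|_{L^\infty_T L^2},
\end{equation*}
using $|e^{-\frac{i}{2}F}|=1$ and $L^2$-boundedness of $P_{\leq 0}$. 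Similarly,
\begin{equation*}
\|B\|_{L^4_{T,x}} \lesssim \|P_+(e^{\frac{i}{2}F}-1)\|_{L^\infty_{T,x}}\,\|P_{\leq 0}(ue^{-\frac{i}{2}F})\|_{L^4_{T,x}} \lesssim \|u\|_{L^\infty_T L^2}\,\|u\|_{L^4_{T,x}},
\end{equation*}
which is again $\lesssim \|(u,w)\|_X^2$.

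The only subtle point, and the reason the identity \eqref{projposB} is recorded before the lemma, is that a naive estimate of $B$ would lose a factor of $\|u\|_{L^2}$ because $P_+$ is not bounded on $L^\infty$. The reduction to $P_+(e^{\frac{i}{2}F}-1)$ (rather than $P_+ e^{\frac{i}{2}F}$) is what allows Lemma \ref{estimlipsexp3} to yield the desired $\|u\|_{L^2}$ smallness, making $B$ genuinely quadratic. Once this observation is in place, the proof reduces to the Hölder bookkeeping sketched above.
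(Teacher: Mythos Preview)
Your proof is correct and follows essentially the same route as the paper: the decomposition $P_+u = 2iw + A + B$ from \eqref{ABgauge}--\eqref{projposB}, the $L^p$-boundedness of $P_+$ together with H\"older for $A$, and the use of \eqref{estimlipsexp3} to handle the $L^\infty$-norm of $P_+(e^{\frac{i}{2}F}-1)$ in $B$. Your remark about why the reduction \eqref{projposB} is needed (because $P_+$ is unbounded on $L^\infty$) is exactly the point the paper is exploiting, though it leaves this implicit.
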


\begin{proof} We use the decomposition \eqref{ABgauge} to estimate $P_{+}$. From Lemma \ref{lemma0} it follows that
\begin{equation}
\|2iw\|_{L^{\infty}([0,T],L^2)}+\|2iw\|_{L^{4}([0,T]\times \T)}\leq \|w\|_{Z^{0,0}_T}+\|w\|_{X^{0,1/2}_T}\leq \|w\|_{X_2}.
\end{equation}

Since $P_+$ is a pseudo-differential operator of order $0$, it maps $L^4$ into itself. Hence
\begin{equation}
\begin{split}
\|A\|_{L^{\infty}([0,T],L^2)}+&\|A\|_{L^{4}([0,T]\times \T)}\\
&\lesssim \big(\|w\|_{L^{\infty}([0,T],L^2)}+\|w\|_{L^{4}([0,T]\times \T)}\big)\|e^{\frac{i}{2}F}-1\|_{L^{\infty}([0,T]\times \T)}\\
&\lesssim \big(\|w\|_{Z^{0,0}_T}+\|w\|_{X^{0,1/2}_T}\big)\|u\|_{L^{\infty}([0,T],L^2)}\leq \|(u,w)\|_{X}^2
\end{split}
\end{equation}
and
\begin{equation*}
\begin{split}
\|B\|_{L^{\infty}([0,T],L^2)}+\|B\|_{L^{4}([0,T]\times \T)}&\lesssim \|P_{+}\big(e^{\frac{i}{2}F}-1\big)\|_{L^{\infty}([0,T]\times \T)}\big(\|u\|_{L^{\infty}([0,T],L^2)}
+\|u\|_{L^{4}([0,T]\times \T)}\big)\\
&\lesssim \|(u,w)\|_{X}^2.
\end{split}
\end{equation*}

Combining these estimates we obtain the desired inequalities.
\end{proof}

Next we obtain  Lipschitz  estimates for the terms involving the operator  $P_+$. More precisely,
\begin{lemme}[Lipschitz Estimates] It holds that
\begin{equation}\label{estimlinPuLip}
\begin{split}
\|P_+u_1-&2iw_1-(P_+u_2-2iw_2) \|_{L^{\infty}([0,T],L^2)}+\|P_+u_1-2iw_1-(P_+u_1-2iw_2) \|_{L^{4}([0,T]\times \T)}\\
&\leq \|(u_1,w_1)-(u_2,w_2) \|_{X}\big(\|(u_1,w_1) \|_{X}+\|(u_2,w_2) \|_{X}+\|(u_2,w_2) \|_{X}^2\big)
\end{split}
\end{equation}
and
\begin{equation}\label{estimPuLip}
\begin{split}
&\|P_+u_1-P_+u_2 \|_{L^{\infty}([0,T],L^2)}+\|P_+u_1-P_+u_2 \|_{L^{4}([0,T]\times \T)}\\
&\leq \|w_1-w_2 \|_{X_2}+\|(u_1,w_1)-(u_2,w_2) \|_{X}\big(\|(u_1,w_1) \|_{X}+\|(u_2,w_2) \|_{X}+\|(u_2,w_2) \|_{X}^2\big).
\end{split}
\end{equation}
\end{lemme}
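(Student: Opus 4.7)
The plan is to exploit the same decomposition already used in the proof of Lemma \ref{projection}, namely
\begin{equation*}
P_+ u = 2iw + A + B,\qquad A = 2i P_+\!\big[w(e^{\frac{i}{2}F}-1)\big],\qquad B = P_+\!\big[P_+(e^{\frac{i}{2}F}-1)\,P_{\leq 0}(ue^{-\frac{i}{2}F})\big],
\end{equation*}
coming from \eqref{ABgauge} and \eqref{projposB}, and to linearize each multilinear expression by a telescoping argument. For \eqref{estimlinPuLip} I would simply write
\begin{equation*}
(P_+u_1 - 2iw_1) - (P_+u_2 - 2iw_2) = (A_1-A_2) + (B_1-B_2),
\end{equation*}
and then decompose
\begin{equation*}
A_1-A_2 = 2iP_+\!\big[(w_1-w_2)(e^{\frac{i}{2}F_1}-1)\big] + 2iP_+\!\big[w_2\,(e^{\frac{i}{2}F_1}-e^{\frac{i}{2}F_2})\big],
\end{equation*}
and similarly split $B_1-B_2$ into three pieces, one for each factor (the outer $P_+(e^{iF/2}-1)$, the $u$ inside $P_{\leq 0}$, and the inner exponential $e^{-iF/2}$). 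Then \eqref{estimPuLip} follows from \eqref{estimlinPuLip} by adding and subtracting $2i(w_1-w_2)$ and absorbing $\|w_1-w_2\|_{L^\infty_t L^2}+\|w_1-w_2\|_{L^4_{x,t}}$ into $\|w_1-w_2\|_{X_2}$ via Lemma \ref{lemma0}.

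The estimation of each individual term follows the same pattern as in Lemma \ref{projection}: one factor carries the ``small'' quantity (either $(u_1,w_1)-(u_2,w_2)$, or a difference of exponentials $e^{iF_1/2}-e^{iF_2/2}$ controlled by \eqref{estimlipsexp1}--\eqref{estimlipsexp2}), while the remaining factors are bounded by $\|(u_j,w_j)\|_X$. Concretely, for the $L^\infty_t L^2_x$ and $L^4_{x,t}$ norms I would use: the $L^2\to L^2$ and $L^4\to L^4$ continuity of $P_+$ (as a Calder\'on--Zygmund-type projection); the $L^\infty_{x,t}$ bounds on $e^{iF_j/2}-1$ and on $P_+(e^{iF_j/2}-1)$ supplied by Lemma \ref{estimlipsexp} (passing through Sobolev embedding, since $P_+$ is \emph{not} bounded on $L^\infty$); and the embedding $\|w\|_{L^\infty_t L^2}+\|w\|_{L^4_{x,t}}\lesssim\|w\|_{X_2}$ from Lemma \ref{lemma0}. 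A single term like $P_+[w_2(e^{iF_1/2}-e^{iF_2/2})]$ is bounded by $\|w_2\|_{L^\infty_tL^2\cap L^4}\,\|e^{iF_1/2}-e^{iF_2/2}\|_{L^\infty_{x,t}}$, which by \eqref{estimlipsexp1} is controlled by $\|(u_2,w_2)\|_X\,\|u_1-u_2\|_{L^\infty_t L^2}\le \|(u_2,w_2)\|_X\,\|U_1-U_2\|_X$.

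The main bookkeeping obstacle is to verify that the $B$-piece produces the cubic term $\|(u_2,w_2)\|_X^2$ appearing in the right-hand side. This arises precisely from the quadratic structure of $B$ in \eqref{projposB}: when one takes the Lipschitz difference of the outer $P_+(e^{iF/2}-1)$ factor via \eqref{estimlipsexp2}, the resulting bound carries both $\|U_1-U_2\|_X$ and an extra factor $(1+\|u_1\|_{L^2}+\|u_2\|_{L^2})$; multiplying by the $L^\infty_tL^2\cap L^4$ bound $\|P_{\leq 0}(u_j e^{-iF_j/2})\|\lesssim\|u_j\|\lesssim\|(u_j,w_j)\|_X$ then produces, in the worst case, the quadratic term $\|(u_2,w_2)\|_X^2\,\|U_1-U_2\|_X$. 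Collecting the linear, quadratic, and cubic contributions from $A_1-A_2$ and $B_1-B_2$ gives precisely the prefactor $\|(u_1,w_1)\|_X+\|(u_2,w_2)\|_X+\|(u_2,w_2)\|_X^2$ on the right-hand side of \eqref{estimlinPuLip}--\eqref{estimPuLip}, completing the proof.
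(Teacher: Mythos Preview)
Your proposal is correct and follows essentially the same approach as the paper: the same decomposition $P_+u=2iw+A+B$ from \eqref{ABgauge}--\eqref{projposB}, the same telescoping of $A_1-A_2$ and $B_1-B_2$, and the same use of Lemma~\ref{estimlipsexp} and Lemma~\ref{lemma0} to estimate each piece. The paper's proof is slightly less explicit about the bookkeeping for the cubic factor (it records $(\|U_1\|_X+\|U_2\|_X)^2$ rather than isolating $\|U_2\|_X^2$), but the argument is otherwise identical to what you outline.
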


\begin{proof} Recall the decomposition \eqref{ABgauge}. To establish the corresponding Lipschitz estimates we write
\begin{equation*}
\begin{split}
A_1-A_2&=2iP_+\big[w_1(e^{\frac{i}{2}F_1}-1)\big]-2iP_+\big[w_2(e^{\frac{i}{2}F_2}-1)\big]\\
&=2iP_+\big[(w_1-w_2)(e^{\frac{i}{2}F_1}-1)+w_2\big(e^{\frac{i}{2}F_1}-e^{\frac{i}{2}F_2}\big)\big].
\end{split}
\end{equation*}
Hence
\begin{equation*}
\begin{split}
\|A_1&-A_2\|_{L^{\infty}([0,T],L^2)}+\|A_1-A_2\|_{L^{4}([0,T]\times \T)}\\
\leq & \big(\|w_1-w_2\|_{L^{\infty}([0,T],L^2)}+\|w_1-w_2\|_{L^{4}([0,T]\times \T)}\big)\|e^{\frac{i}{2}F_1}-1\|_{L^{\infty}([0,T]\times \T)}\\
&+\big(\|w_2\|_{L^{\infty}([0,T],L^2)}+\|w_2\|_{L^{4}([0,T]\times \T)}\big)\|e^{\frac{i}{2}F_1}-e^{\frac{i}{2}F_2}\|_{L^{\infty}([0,T]\times \T)}\\
\leq & \big(\|w_1-w_2\|_{Z^{0,0}_T}+\|w_1-w_2\|_{X^{0,1/2}_T}\big)\|u_1\|_{L^{\infty}([0,T],L^2)}\\
&+ \big(\|w_2 \|_{Z^{0,0}_T}+\|w_2\|_{X^{0,1/2}_T}\big)\|u_1-u_2\|_{L^{\infty}([0,T],L^2)}\\
\leq &\|U_1-U_2\|_{X}\big(\|U_1\|_{X}+\|U_2\|_{X}\big).
\end{split}
\end{equation*}

On the other hand, we write
\begin{equation*}
\begin{split}
&B_1-B_2=P_+\big[P_+(e^{\frac{i}{2}F_1}-1)P_{\leq0}(u_1e^{-\frac{i}{2}F_1})-P_+(e^{\frac{i}{2}F_2}-1)P_{\leq0}(u_2e^{-\frac{i}{2}F_2})\big]\\
&=P_+\big[P_+(e^{\frac{i}{2}F_1}-e^{\frac{i}{2}F_2})P_{\leq0}(u_1e^{-\frac{i}{2}F_1})+P_+(e^{\frac{i}{2}F_2}-1)P_{\leq 0}
\big((u_1-u_2)e^{-\frac{i}{2}F_1}+u_2(e^{-\frac{i}{2}F_1}-e^{-\frac{i}{2}F_2})\big)\big].
\end{split}
\end{equation*}
Thus
\begin{equation*}
\begin{split}
\|B_1&-B_2\|_{L^{\infty}([0,T],L^2)}+\|B_1-B_2\|_{L^{4}([0,T]\times \T)}\\
&\leq C\,\Big(\|U_1\|_X+\|U_2\|_X+(\|U_1\|_{X}+\|U_2\|_{X})^2\Big)\|U_1-U_2\|_{X}.\\
\end{split}
\end{equation*}

These estimates lead to inequalities \eqref{estimlinPuLip} and \eqref{estimPuLip}.
\end{proof}

\vskip0.3cm
\section{Small data estimates}
The control results as well as the proof of Theorem \ref{sourcelwp} depend on the size data. 
In this section we aim to establish all the estimates related to small data needed in our
arguments. We will also give the proof of Theorem \ref{sourcelwp} corresponding
to small data.

Since we are just considering real valued functions we have $P_-u=\overline{P_+u}$, thus
\bna
 \|u \|_{L^{\infty}([0,T],L^2)}+\|u \|_{L^{4}([0,T]\times \T)} \approx \|P_+u \|_{L^{\infty}([0,T],L^2)}+\|P_+u \|_{L^{4}([0,T]\times \T)}.
\ena
Gathering the information from Lemmas \ref{estimatesw}, \ref{projection} and \ref{estimsolBO}  yields
\begin{equation*}
\begin{split}
\|w&\|_{X_2}\leq \|w_0\|_{L^2}+C\big(\|g\|_{L^{2}([0,T],L^2)}+\|g\|_{L^{2}([0,T],L^2)}^2+\|(u,w)\|_{X}^2+\|(u,w)\|_{X}^3\big)\\
\|u&\|_{L^{\infty}([0,T],L^2)}+\|u\|_{L^{4}([0,T]\times \T)}\leq\|w\|_{X_2}+\|(u,w)\|_{X}^2\\
\|u&\|_{X^{-1,1}_T}\leq \|u_0\|_{L^2}+\| (u,w)\|_{X}^2+\|g\|_{L^2([0,T],L^2)}.
\end{split}
\end{equation*}
and finally
\bna
\|(u,w) \|_{X}\lesssim 
 \|u_0\|_{L^2}+ \|g \|_{L^{2}([0,T],L^2)}+\|g \|_{L^{2}([0,T],L^2)}^2+\|(u,w) \|_{X}^2+\|(u,w) \|_{X}^3
\ena
If $\|g \|_{L^{2}([0,T],L^2)}$ is small enough, it follows that
\bna
\|(u,w) \|_{X}\lesssim  \|u_0\|_{L^2}+ \|g \|_{L^{2}([0,T],L^2)}+\|(u,w) \|_{X}^2+\|(u,w) \|_{X}^3.
\ena
Then, since all these quantities are continuous in $T$ and have value at $t=0$ bounded by $C\|u_0\|_{L^2}$, see Lemma \ref{continuiteXt}, we can apply a bootstrap argument to 
get for $\|u_0\|$ and $\|g \|_{L^{2}([0,T],L^2)}$,  small enough, that
\begin{equation}\label{lipschbound}
\|(u,w) \|_{X}\lesssim \|u_0\|_{L^2}+ \|g \|_{L^{2}([0,T],L^2)}.
\end{equation}

\subsection{First order estimates}  Using the Lipschitz bound in the previous section we will establish some estimates useful to prove the control results.

The estimates \eqref{estimlinPub} and \eqref{estimwlinfinb} yield
\begin{equation*}
\begin{split}
\|P_+u-2iw \|_{L^{\infty}([0,T],L^2)}&\leq \|(u,w) \|_{X}^2 \leq \|u_0\|_{L^2}^2+\|g \|_{L^{2}([0,T],L^2)}^2\\
\|w-w_{L} \|_{X_2}=\|w-w_{L} \|_{X^{0,1/2}}+\|w-w_{L} \|_{Z^{0,0}}&\leq \|g \|_{L^{2}([0,T],L^2)}\|(u,w) \|_{X}+\|(u,w) \|_{X}^2.
\end{split}
\end{equation*}
Finally, we have by triangular inequality (noting that $Z^{0,0}\subset L^{\infty}([0,T],L^2)$)
\bna
\|P_+u-2iw_L \|_{L^{\infty}([0,T],L^2)}&\leq & \|u_0\|_{L^2}^2+\|g \|_{L^{2}([0,T],L^2)}^2
\ena
where
\begin{equation*}
\begin{cases}
\partial_t w_L-i\partial_x^2w_L=\frac{i}{2}P_+g,\\
w_L(0)=w_0
\end{cases}
\end{equation*}

In particular, since $u$ is real valued, $P_-u=\overline{P_+u}$, so
\bna
\|P_-u-\overline{2iw_L} \|_{L^{\infty}([0,T],L^2)}&\leq &  \|u_0\|_{L^2}^2+\|g \|_{L^{2}([0,T],L^2)}^2.
\ena
But, we notice that $u=P_+u+P_-u$ and $u_L=2iw_L+\overline{2iw_L}$. In particular, it follows that
\begin{equation}\label{estimlinaru}
\|u-u_L \|_{L^{\infty}([0,T],L^2)}\lesssim \|u_0\|_{L^2}^2+\|g \|_{L^{2}([0,T],L^2)}^2.
\end{equation}
\vskip10pt

\subsection{Lipschitz estimates}
If $\|u_{0,i}\|$, $\|g_i\|_{L^{2}([0,T],L^2)}$, $i=1,2$, are small enough, we still have
\bna
\|(u_i,w_i) \|_{X}\lesssim \|u_{0,i}\|+ \|g_i \|_{L^{2}([0,T],L^2)}.
\ena

Assume that $\|u_{0,i}\|+\|g_i\|_{L^{2}([0,T],L^2)}\leq \e$, with $\e$ small. Then
\refp{estimwlip} becomes
\begin{equation*}
\|w_1-w_2\|_{X_2}\lesssim \|u_{0,1}-u_{0,2}\|+ \|g_1-g_2\|_{L^2([0,T]\times \T)}+\e\|(u_1,w_1)-(u_2,w_2)\|_{X}.
\end{equation*}
Inequality \refp{estimulip} becomes
\begin{equation*}
\|u_1-u_2\|_{X^{-1,1}_T}\lesssim  \|u_{0,1}-u_{0,2}\|+\|g_1-g_2\|_{L^{2}([0,T]\times \T)}+\e\|(u_1,w_1)-(u_2,w_2)\|_{X}
\end{equation*}
and inequality \refp{estimPuLip} becomes
\begin{equation*}
\begin{split}
\|P_+u_1&-P_+u_2\|_{L^{\infty}([0,T],L^2)}+\|P_+u_1-P_+u_1\|_{L^{4}([0,T]\times \T)}\\
&\lesssim \|w_1-w_2\|_{X_2}+\e\|(u_1,w_1)-(u_2,w_2)\|_{X}.
\end{split}
\end{equation*}

Thus gathering all these estimates of the $X$--norm components  (using again that $u$ is real valued and that the estimates about
$P_+u$ are sufficient  for the $L^{\infty}([0,T],L^2)$ and $L^{4}([0,T]\times \T)$ norms), we get
\bna
\| (u_1,w_1)-(u_2,w_2)\|_{X}\lesssim  \|u_{0,1}-u_{0,2}\|+ \|g_1-g_2\|_{L^{2}([0,T]\times \T)}+\e\|(u_1,w_1)-(u_2,w_2)\|_{X}
\ena
which gives by absorption for $\e$ small enough
\begin{equation}\label{estimlipuw}
\| (u_1,w_1)-(u_2,w_2)\|_{X}\lesssim  \|u_{0,1}-u_{0,2}\|+ \|g_1-g_2\|_{L^{2}([0,T]\times \T)}.
\end{equation}

For the estimates of the second order terms, we have from \refp{estimwlinfinb} and \refp{estimlipuw}
\bna
&&\|w_1-w_{L,1}-(w_2-w_{L,2}) \|_{X_2}\lesssim \e\left(\|u_{0,1}-u_{0,2}\|+\|g_1-g_2 \|_{L^2([0,T]\times \T)}\right)
\ena
and from \refp{estimlinPuLip}  that
\bna
\|P_+u_1-2iw_1-(P_+u_2-2iw_2) \|_{L^{\infty}([0,T],L^2)}
&\lesssim &\e\left(\|u_{0,1}-u_{0,2}\|+\|g_1-g_2 \|_{L^2([0,T]\times \T)}\right).
\ena

The triangular inequality yields
\bna
\|P_+u_1-2iw_{L,1}-(P_+u_2-2iw_{L,2}) \|_{L^{\infty}([0,T],L^2)} &\lesssim &\e\left(\|u_{0,1}-u_{0,2}\|+\|g_1-g_2 \|_{L^2([0,T]\times \T)}\right).
\ena

Once again, we notice that since $u_i$, $g_i$ are real valued, this implies
\begin{equation}\label{estimLipfinkin}
\|u_1-u_{L,1}-(u_2-u_{L,2}) \|_{L^{\infty}([0,T],L^2)} \lesssim \e\left(\|u_{0,1}-u_{0,2}\|+\|g_1-g_2 \|_{L^2([0,T]\times \T)}\right).
\end{equation}

\vskip10pt

\begin{proof}[Proof of Theorem \ref{sourcelwp} \rm(small data case)]  The main ingredient is the Lipschitz inequality \eqref {estimlipuw}. Once this is established, the proof of the theorem  follows from standard arguments. For a detailed and careful proof of it,  see  \cite{MolinetPilodBO}.
\end{proof}

\section{Large data estimates}

In this section we will complete the proof of Theorem \ref{sourcelwp} and establish some estimates for large data
useful for further analysis.

\subsection{Low-frequency estimate}

In this subsection, we establish some estimates of the low frequency component of the solution for eventually large data. These estimates  are necessary 
because  the gauge transform is not invertible at low frequency for large data.

Let $N\in \N^*$ be fixed. We will prove some estimates that might  depend on $N$.
\begin{lemme}
For $u$ solution of the IVP \eqref{soluBO}, we have the following uniform estimate for $0\leq T\leq 1$
\begin{equation}\label{estimLipschilowA}
\begin{split}
\|P_N&u\|_{L^{\infty}([0,T],L^2)}+\|P_Nu\|_{L^{4}([0,T]\times \T)}\\
&\leq C(N) \|u_0\|_{L^2}+C(N) \|g\|_{L^2([0,T]\times \T)}+C(N)T^{1/2}\|u\|_{L^{4}([0,T]\times \T)}^2,
\end{split}
\end{equation}
and for two different solutions
\begin{equation}\label{estimLipschilow}
\begin{split}
\|P_N&(u_1-u_2)\|_{L^{\infty}([0,T],L^2)}+\|P_N(u_1-u_2)\|_{L^{4}([0,T]\times \T)}\\
&\leq C(N) \|u_{0,1}-u_{0,2}\|_{L^2}+C(N) \|g_1-g_2\|_{L^2([0,T]\times \T)}\\
&\;\;\;+C(N)T^{1/2}\big(\|u_1\|_{L^{4}([0,T]\times \T)}+\|u_2\|_{L^{4}([0,T]\times \T)}\big)\|u_1-u_2\|_{L^{4}([0,T]\times \T)}.
\end{split}
\end{equation}
\end{lemme}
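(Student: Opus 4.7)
The plan is to exploit the fact that the projector $P_N$ truncates to frequencies $|\xi|\le N$, so that we can afford the classical derivative loss in the nonlinearity $\frac12\partial_x(u^2)$ by paying a factor $N$ (via Bernstein), rather than resorting to the gauge transform machinery. In particular, the full solution $u$ may be large, but the low-frequency object $P_N u$ lives in a finite-dimensional space on which all Sobolev norms are equivalent.

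\emph{Step 1 (Energy estimate on the truncated equation).} Applying $P_N$ to \eqref{soluBO} gives
\begin{equation*}
\partial_t (P_N u) + \H\partial_x^2 (P_N u) = \tfrac12\partial_x P_N(u^2) + P_N g.
\end{equation*}
Multiply by $P_N u$, integrate on $\T$ and use the antisymmetry of $\H\partial_x^2$ to obtain
\begin{equation*}
\tfrac{d}{dt}\|P_N u\|_{L^2}^2 \lesssim \|\partial_x P_N u\|_{L^2}\,\|P_N(u^2)\|_{L^2} + \|P_N u\|_{L^2}\,\|P_N g\|_{L^2}.
\end{equation*}
Bernstein on $\T$ yields $\|\partial_x P_N u\|_{L^2}\le N\|P_N u\|_{L^2}$, while $\|P_N(u^2)\|_{L^2}\le \|u\|_{L^4_x}^2$. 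After dividing by $\|P_N u\|_{L^2}$ we get
\begin{equation*}
\tfrac{d}{dt}\|P_N u\|_{L^2} \lesssim N\,\|u\|_{L^4_x}^2 + \|g\|_{L^2_x}.
\end{equation*}

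\emph{Step 2 (Integration in time).} Integrating on $[0,t]\subset[0,T]$ and applying Cauchy--Schwarz twice,
\begin{equation*}
\int_0^T \|u(s)\|_{L^4_x}^2\,ds \le T^{1/2}\,\|u\|_{L^4([0,T]\times\T)}^2,\qquad \int_0^T \|g(s)\|_{L^2_x}\,ds \le T^{1/2}\|g\|_{L^2([0,T]\times\T)},
\end{equation*}
gives the $L^\infty_T L^2_x$ bound
\begin{equation*}
\|P_N u\|_{L^\infty_T L^2_x} \lesssim \|u_0\|_{L^2} + N\,T^{1/2}\,\|u\|_{L^4([0,T]\times\T)}^2 + T^{1/2}\|g\|_{L^2([0,T]\times\T)}.
\end{equation*}

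\emph{Step 3 (Passage to $L^4_{t,x}$).} Using Bernstein $\|P_N f\|_{L^4_x}\lesssim N^{1/4}\|f\|_{L^2_x}$ on $\T$ and $T\le 1$,
\begin{equation*}
\|P_N u\|_{L^4([0,T]\times\T)} \le T^{1/4}\|P_N u\|_{L^\infty_T L^4_x} \le C N^{1/4} T^{1/4}\,\|P_N u\|_{L^\infty_T L^2_x},
\end{equation*}
which combined with Step 2 yields \eqref{estimLipschilowA} with an explicit constant $C(N)$.

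\emph{Step 4 (Lipschitz version).} Setting $v=u_1-u_2$, one has $u_1^2-u_2^2=(u_1+u_2)v$ and
\begin{equation*}
\partial_t v + \H\partial_x^2 v = \tfrac12\partial_x\bigl((u_1+u_2)v\bigr) + (g_1-g_2).
\end{equation*}
Repeating Steps 1--3 with $(u_1+u_2)v$ in place of $u^2$, and bounding $\|(u_1+u_2)v\|_{L^2_x}\le \|u_1+u_2\|_{L^4_x}\|v\|_{L^4_x}$ followed by Cauchy--Schwarz in time, produces \eqref{estimLipschilow}. The main term to be tracked is
\begin{equation*}
\int_0^T \|(u_1+u_2)v\|_{L^2_x}\,ds \le T^{1/2}\bigl(\|u_1\|_{L^4([0,T]\times\T)} + \|u_2\|_{L^4([0,T]\times\T)}\bigr)\|v\|_{L^4([0,T]\times\T)}.
\end{equation*}
There is no genuine obstacle here; the only subtlety is to remember that the constants are allowed to depend on $N$ (so that Bernstein is free) and that $T\le 1$ lets us freely suppress positive powers of $T$ when convenient.
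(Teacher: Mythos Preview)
Your proof is correct and follows essentially the same strategy as the paper: reduce both norms to the $L^\infty_T L^2_x$ estimate via frequency localization (Bernstein/Sobolev on the range of $P_N$), and control the latter from the projected equation using that $\partial_x P_N$ costs only a factor $N$. The only cosmetic difference is that the paper uses the Duhamel/semigroup formula to bound $\|P_N u\|_{L^\infty_T L^2_x}$ by $\|u_0\|_{L^2}+\|P_N\partial_x(u^2)\|_{L^1_T L^2_x}+\|P_N g\|_{L^1_T L^2_x}$, whereas you obtain the same thing via the energy identity (multiplying by $P_N u$ and integrating by parts); the two routes are equivalent here.
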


\begin{proof}
We first link the $L^4$ estimate to the $L^{\infty}_tL^2_x$ one by using Sobolev estimates, that is, 
\bna
\|P_Nu \|_{L^{4}([0,T]\times \T)}\leq CT^{1/4}\|P_Nu \|_{L^{\infty}([0,T],L^4)} \leq C\|P_Nu \|_{L^{\infty}([0,T],H^1)}\leq C(N)\|P_Nu \|_{L^{\infty}([0,T],L^2)}.
\ena

To estimate the $L^{\infty}_tL^2_x$ norm we use the solution  $u_N$  of the equation satisfied by $u_N=P_Nu$, i.e.
\begin{equation*}
\begin{cases}
\partial_t u_N+\H \partial_x^2 u_N=\frac{1}{2}P_N\partial_x (u^2)+P_Ng,\\
u_N(0)=P_N u_0.
\end{cases}
\end{equation*}
From semi-group estimates we obtain
\begin{equation*}
\begin{split}
\|P_Nu\|_{L^{\infty}([0,T],L^2)}&\leq C\|u_0\|_{L^2}+C\|P_N\partial_x (u^2)\|_{L^{1}([0,T],L^2)}+C\|P_Ng\|_{L^{1}([0,T],L^2)}\\
&\leq C\|u_0\|_{L^2}+C(N)T^{1/2}\|P_N(u^2)\|_{L^{2}([0,T],L^2)}+C\|g\|_{L^{1}([0,T],L^2)}\\
&\leq C\|u_0\|_{L^2}+C\|g\|_{L^{2}([0,T],L^2)}+C(N)T^{1/2}\|u\|_{L^{4}([0,T]\times \T)}^2.
\end{split}
\end{equation*}

The same argument leads to \eqref{estimLipschilow}.
\end{proof}

\subsubsection{Global large data estimates}
We change a little the decomposition (\ref{inversionuw1}). The point is, roughly speaking, to make the gauge transform \lq\lq invertible" at high frequency 
(the frequency will depend on the size of the data). For large $F$ the second term of the gauge transform \eqref{inversionuw1} is bad, and the gauge 
transform cannot be inverted for large $F$. But in some sense, it can be inverted at  large positive frequencies. Note that another way of obtaining global
 estimates for large data used by Molinet \cite{molinetBOT} was to use the scaling argument to get to some small data (because the equation is subcritical). 
 But this approach requires to have some estimates uniform on the lengths of the interval. Many of the estimates we used are indeed uniform, but this is not the 
 case of the Sobolev embedding $H^1\hookrightarrow L^{\infty}$. One {\bf solution} found by Molinet in \cite{molinetBOT} was to apply this estimate to $P_1$ 
 and the estimate indeed becomes uniform.  Our approach is quite in the same spirit however we believe that it gives another point of view and may be more 
 reliable in the case of damped equation or source term.

Let $N\in \N^*$ be large, to be chosen later. Applying the operator $P_{\geq N}$ to both sides to \eqref{inversionuw1} we obtain
\begin{equation*}
\begin{split}
P_{\geq N}u&=2iP_{\geq N}\big[we^{\frac{i}{2}F}\big]+P_{\geq N}\big[P_{\geq N}(e^{\frac{i}{2}F})P_{\leq 0}(ue^{-\frac{i}{2}F})\big]\\
&=A_N+B_N.
\end{split}
\end{equation*}
We apply the same estimates as before to get (the action of operator $P_{\geq N}$ on $L^4$ can be easily seen to be uniform on $N$ by noticing  that 
$P_{\geq N}=e^{iNx}P_+ e^{-iNx}$)
\begin{equation*}
\begin{split}
\|A_N \|_{L^{\infty}([0,T],L^2)}+\|A_N \|_{L^{4}([0,T]\times \T)}&\leq  C\,\big(\|w \|_{L^{\infty}([0,T],L^2)}+\|w \|_{L^{4}([0,T]\times \T)}\big)\\
&\leq C\,\big(\|w \|_{Z^{0,0}_T}+\|w \|_{X^{0,1/2}_T}\big)\leq C\,\|w \|_{X_2}.
\end{split}
\end{equation*}

For $B_N$, we also use Lemma \ref{lemmeexphigh} in the Appendix.
\begin{equation*}
\begin{split}
\|B_N&\|_{L^{\infty}([0,T],L^2)}+\|B_N\|_{L^{4}([0,T]\times \T)}\\
&\leq \|P_{\geq N}(e^{\frac{i}{2}F})\|_{L^{\infty}([0,T]\times \T)}\big(\|u\|_{L^{\infty}([0,T],L^2)}+\|u\|_{L^{4}([0,T]\times \T)}\big)\\
&\leq \frac{C}{\sqrt{N}}\|(u,w)\|_{X}^2.
\end{split}
\end{equation*}

The above estimates yield
\begin{lemme}\label{lemmaestimPNL2L4} For $N\in\N^*$ large, it holds that
\bnan
\label{estimPNL2L4}
\|P_{\geq N}u \|_{L^{\infty}([0,T],L^2)}+\|P_{\geq N}u \|_{L^{4}([0,T]\times \T)}\leq \|w \|_{X_2}+\frac{C}{\sqrt{N}}\|(u,w) \|_{X}^2.
\enan
\end{lemme}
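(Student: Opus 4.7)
The plan is to invert the gauge transform at high frequencies, exploiting the fact that while $u = 2iwe^{iF/2} + e^{iF/2}P_{\le 0}(ue^{-iF/2})$ (identity \eqref{inversionuw1}) is useless in low frequencies, the second term becomes quantitatively small once we project onto frequencies $\xi \ge N$. Concretely, I would apply $P_{\ge N}$ to \eqref{inversionuw1} and split
\begin{equation*}
P_{\ge N}u \;=\; 2iP_{\ge N}\!\bigl[we^{\frac{i}{2}F}\bigr] \;+\; P_{\ge N}\!\bigl[P_{\ge N}(e^{\frac{i}{2}F})\,P_{\le 0}(ue^{-\frac{i}{2}F})\bigr] \;=:\; A_N + B_N,
\end{equation*}
where the cancellation giving the factor $P_{\ge N}(e^{iF/2})$ in $B_N$ is the same one used in \eqref{projposB}: the convolution of two frequency-negative functions cannot produce a positive frequency, let alone a frequency $\ge N$, so only the high-frequency part of $e^{iF/2}$ survives.

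For $A_N$, I would first observe that $P_{\ge N}$ is bounded uniformly in $N$ on $L^4$ because of the identity $P_{\ge N} = e^{iNx}P_+ e^{-iNx}$, which reduces its boundedness to that of $P_+$; combined with the trivial bound $|e^{iF/2}|=1$, this gives
\begin{equation*}
\|A_N\|_{L^\infty_T L^2} + \|A_N\|_{L^4_{T,x}} \;\lesssim\; \|w\|_{L^\infty_T L^2} + \|w\|_{L^4_{T,x}} \;\lesssim\; \|w\|_{Z^{0,0}_T} + \|w\|_{X^{0,1/2}_T} \;\lesssim\; \|w\|_{X_2},
\end{equation*}
using Lemma \ref{lemma0} and Strichartz (Lemma \ref{strichartz}) to reduce to the $X_2$-norm.

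For $B_N$, the key input is the high-frequency decay of $e^{iF/2}$. I would invoke Lemma \ref{lemmeexphigh} (from the appendix) to get an estimate of the form $\|P_{\ge N}(e^{iF/2})\|_{L^\infty_{T,x}} \lesssim N^{-1/2}\|u\|_{L^\infty_T L^2}$; placing this factor in $L^\infty$, the remaining $P_{\le 0}(ue^{-iF/2})$ is controlled in $L^\infty_T L^2 \cap L^4_{T,x}$ by $\|u\|_{L^\infty_T L^2} + \|u\|_{L^4_{T,x}}$ (again using $|e^{-iF/2}|=1$ and the uniform $L^4$-boundedness of $P_{\le 0}$), yielding
\begin{equation*}
\|B_N\|_{L^\infty_T L^2} + \|B_N\|_{L^4_{T,x}} \;\lesssim\; \frac{1}{\sqrt{N}}\,\|(u,w)\|_X^2.
\end{equation*}
Summing the two contributions produces \eqref{estimPNL2L4}. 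The only genuinely nontrivial point is the $N^{-1/2}$ gain in the high-frequency part of $e^{iF/2}$, which is precisely what is deferred to the appendix lemma; everything else is bookkeeping with the $X$ and $X_2$ norms already assembled in Section~3.
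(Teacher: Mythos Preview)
Your proposal is correct and follows the paper's proof essentially line for line: the same decomposition $P_{\ge N}u = A_N + B_N$ obtained from \eqref{inversionuw1} via the frequency cancellation $P_{\ge N}[P_{<N}(e^{iF/2})P_{\le 0}(\cdot)]=0$, the same uniform $L^4$-boundedness of $P_{\ge N}$ through $P_{\ge N}=e^{iNx}P_+e^{-iNx}$ for $A_N$, and the same appeal to Lemma~\ref{lemmeexphigh} for the $N^{-1/2}$ gain on $B_N$. There is nothing to add.
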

\vskip.3mm
\subsubsection{Large data Lipschitz estimates}
The previous decomposition is still not sufficient. This time, it is because of the first term of the gauge transform. The difficulty now comes from the
 the low frequency of $F$  which a priori do not allow to make $P_{\geq N}\big[w_2\big(e^{\frac{i}{2}F_1}-e^{\frac{i}{2}F_2}\big)\big]$ small.

We change a little bit the decomposition inspired by Molinet \cite{molinetBOT} p. 663. Let $N\in \N^*$ be large,  to be chosen later.
\begin{equation*}
e^{-\frac{i}{2}F}=P_{\geq N}W+P_{\leq N}(e^{-\frac{i}{2}F})
\end{equation*}

Then decomposing $F=Q_{N}F+P_N F$, we get
\begin{equation*}
e^{-\frac{i}{2}Q_NF}=e^{\frac{i}{2}P_N F}\big(P_{\geq N}W+P_{\leq N}(e^{-\frac{i}{2}F})\big).
\end{equation*}

Taking derivative in $x$ yields
\begin{equation*}
(Q_Nu)e^{-\frac{i}{2}Q_NF}=-(P_Nu)e^{\frac{i}{2}P_NF}\big(P_{\geq N}W+P_{\leq N}(e^{-\frac{i}{2}F})\big)
+e^{\frac{i}{2}P_NF}\big(2iP_{\geq N}w+P_{\leq N}(ue^{-\frac{i}{2}F})\big)
\end{equation*}
or
\begin{equation*}
\begin{split}
Q_Nu&=-(P_Nu)e^{\frac{i}{2}P_NF}\big(P_{\geq N}W+P_{\leq N}(e^{-\frac{i}{2}F})\big)+e^{\frac{i}{2}P_NF}\big(2iP_{\geq N}w+P_{\leq N}(ue^{-\frac{i}{2}F})\big)\\
&\;\;\;+(Q_Nu)\big(1-e^{-\frac{i}{2}Q_NF}\big).
\end{split}
\end{equation*}

We now apply $P_{\geq 3N}$ to the last identity to obtain
\begin{equation*}
\begin{split}
P_{\geq 3N}u&=-P_{\geq 3N}\big[(P_Nu)e^{\frac{i}{2}P_NF}P_{\geq N}W\big]-P_{\geq 3N}\big[(P_Nu)e^{\frac{i}{2}P_NF}P_{\leq N}(e^{-\frac{i}{2}F})\big]\\
&\;\;\;+2iP_{\geq 3N}\big[e^{\frac{i}{2}P_NF}P_{\geq N}w\big]+P_{\geq 3N}\big[e^{\frac{i}{2}P_NF}P_{\leq N}(ue^{-\frac{i}{2}F})\big]\\
&\;\;\;+P_{\geq 3N}\big[(Q_Nu)\big(1-e^{-\frac{i}{2}Q_NF}\big)\big]\\
&=A_N+B_N+C_N+D_N+E_N.
\end{split}
\end{equation*}

We write 
\begin{equation*}
\begin{split}
A_{N,1}-A_{N,2}&=-P_{\geq 3N}\big[(P_N(u_1-u_2))e^{\frac{i}{2}P_N F_1}P_{\geq N}W_1+(P_Nu_2)\big(e^{\frac{i}{2}P_N F_1}-e^{\frac{i}{2}P_NF_2}\big)P_{\geq N}W_1\\
&\;\;\; +(P_Nu_2)e^{\frac{i}{2}P_NF_2}P_{\geq N}(W_1-W_2)\big].
\end{split}
\end{equation*}
Then using the Sobolev estimate \eqref{sobhigh} of  the Appendix on the terms with $W$, we get
\begin{equation}\label{AN1}
\begin{split}
\|A_{N,1}-&A_{N,2} \|_{L^{\infty}([0,T],L^2)}+\|A_{N,1}-A_{N,2} \|_{L^{4}([0,T]\times \T)}\\
&\leq \frac{C}{\sqrt{N}} \|U_1-U_2 \|_{X}\big[\|U_1 \|_{X}+\|U_1 \|_{X}^2\|U_1\|_{X}+\|U_1\|_X^2\|U_2 \|_{X}+\|U_2 \|_{X}^2\big]
\end{split}
\end{equation}

To obtain estimates on $B_N$, we first notice that due to the frequency localization, we have
$$
B_N=-P_{\geq 3N}\big[(P_Nu)P_{\geq N}(e^{\frac{i}{2}P_NF})P_{\leq N}(e^{-\frac{i}{2}F})\big].
$$
Thus we can write
\begin{equation*}
\begin{split}
B_{N,1}-B_{N,2} &=P_{\geq 3N}\big[(P_N(u_1-u_2))P_{\geq N}(e^{\frac{i}{2}P_NF_1})P_{\leq N}(e^{-\frac{i}{2}F_1})\\
&\;\;\;+(P_Nu_2)P_{\geq N}(e^{\frac{i}{2}P_NF_1}-e^{\frac{i}{2}P_NF_2})P_{\leq N}(e^{-\frac{i}{2}F_1})\\
&\;\;\;+(P_Nu_2)P_{\geq N}(e^{\frac{i}{2}P_NF_2})P_{\leq N}(e^{-\frac{i}{2}F_1}-e^{-\frac{i}{2}F_2})\big]
\end{split}
\end{equation*}

Using the estimates of Lemma \ref{lemmeexphigh} for the term with $P_{\geq N}$, we obtain
\begin{equation}\label{BN1}
\begin{split}
\|B_{N,1}&-B_{N,2}\|_{L^{\infty}([0,T],L^2)}+\|B_{N,1}-B_{N,2}\|_{L^{4}([0,T]\times \T)}\\
\leq &\frac{C}{\sqrt{N}} \|U_1-U_2 \|_{X}\big[\|U_1 \|_{X}+\|U_1 \|_{X}^2+\|U_1 \|_{X}^3+\|U_2 \|_{X}+\|U_2 \|_{X}^2+\|U_2\|_{X}^3\big].
\end{split}
\end{equation}

$C_N$ is the crucial term that contains the information on $w$ and we have to estimate it with respect to the norm of $P_N u$.
\begin{equation*}
C_{N,1}-C_{N,2}= 2iP_{\geq 3N}\big[\big(e^{\frac{i}{2}P_NF_1}-e^{\frac{i}{2}P_NF_2}\big)P_{\geq N}w_1-e^{\frac{i}{2}P_NF_2}P_{\geq N}(w_1-w_2)\big].
\end{equation*}
Thus
\begin{equation}\label{CN1}
\begin{split}
\|C_{N,1}-C_{N,2} \|_{L^{\infty}([0,T],L^2)}&+\|C_{N,1}-C_{N,2} \|_{L^{4}([0,T]\times \T)}\\
&\leq C\|P_N(u_n-u_2) \|_{L^{\infty}([0,T],L^2)}\|U_1 \|_{X}+C\|w_1-w_2 \|_{X_2}.
\end{split}
\end{equation}

$D_N$ can be treated similarly as $B_N$. By frequency localization, we also obtain
\begin{equation*}
D_N=P_{\geq 3N}\big[P_{\geq 2N}(e^{\frac{i}{2}P_NF})P_{\leq N}(ue^{-\frac{i}{2}F})\big]
\end{equation*}
and then
\begin{equation}\label{DN1}
\begin{split}
\|D_{N,1}-&D_{N,2} \|_{L^{\infty}([0,T],L^2)}+\|D_{N,1}-D_{N,2} \|_{L^{4}([0,T]\times \T)}\\
&\leq \frac{C}{\sqrt{N}} \|U_1-U_2 \|_{X}\big[\|U_1 \|_{X}+\|U_1 \|_{X}^2+\|U_1 \|_{X}^3+\|U_2 \|_{X}+\|U_2 \|_{X}^2+\|U_2 \|_{X}^3\big].
\end{split}
\end{equation}

For $E_N$, we use the third estimate of Lemma \ref{lemmeexphigh}.
\bna
E_{N,1}-E_{N,2}=P_{\geq 3N}\big[(Q_N(u_1-u_2))\big(1-e^{-\frac{i}{2}Q_NF_1}\big)\big]+P_{\geq 3N}\big[(Q_Nu_2)\big(e^{-\frac{i}{2}Q_NF_2}-e^{-\frac{i}{2}Q_NF_1}\big)\big].
\ena
Hence
\begin{equation}\label{EN1}
\begin{split}
\|E_{N,1}-E_{N,2} \|_{L^{\infty}([0,T],L^2)}&+\|E_{N,1}-E_{N,2} \|_{L^{4}([0,T]\times \T)}\\
&\leq \frac{C}{\sqrt{N}} \|U_1-U_2 \|_{X}\big[\|U_1 \|_{X}+\|U_1 \|_{X}^2+\|U_2 \|_{X}+\|U_2 \|_{X}^2\big].
\end{split}
\end{equation}

Gathering the estimates \eqref{AN1}-\eqref{EN1} results in the
\begin{lemme}\label{projLD}
\begin{equation}\label{Lipschitzlarghigh}
\begin{split}
\|P_{\geq 3N}&(u_1-u_2) \|_{L^{\infty}([0,T],L^2)}+\|P_{\geq 3N}(u_1-u_2) \|_{L^{4}([0,T]\times \T)}\\
&\leq C\|P_N(u_n-u_2) \|_{L^{\infty}([0,T],L^2)}\|U_1 \|_{X}+C\|w_1-w_2 \|_{X_2}\\
&\;\;\;+\frac{C}{\sqrt{N}} \|U_1-U_2 \|_{X}\big[\|U_1 \|_{X}+\|U_1 \|_{X}^2+\|U_1 \|_{X}^3+\|U_2 \|_{X}+\|U_2 \|_{X}^2+\|U_2 \|_{X}^3\big].
\end{split}
\end{equation}
\end{lemme}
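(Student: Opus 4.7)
The plan is to take the five-term decomposition $P_{\geq 3N} u = A_N + B_N + C_N + D_N + E_N$ that has just been set up in the lines preceding the lemma, apply it to both $u_1$ and $u_2$, and use the triangle inequality on the difference
\[
P_{\geq 3N}(u_1 - u_2) = (A_{N,1}{-}A_{N,2}) + (B_{N,1}{-}B_{N,2}) + (C_{N,1}{-}C_{N,2}) + (D_{N,1}{-}D_{N,2}) + (E_{N,1}{-}E_{N,2})
\]
in the norm $\|\cdot\|_{L^\infty([0,T],L^2)} + \|\cdot\|_{L^4([0,T]\times\T)}$. The five summands are then controlled by the Lipschitz estimates \eqref{AN1}, \eqref{BN1}, \eqref{CN1}, \eqref{DN1}, and \eqref{EN1} that were derived one by one above the lemma statement, and adding them up reproduces exactly the right-hand side of \eqref{Lipschitzlarghigh}.

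The only summand without a $1/\sqrt{N}$ gain is the ``crucial'' piece $C_N$, for which \eqref{CN1} directly gives the two terms $C\|w_1-w_2\|_{X_2}$ and $C\|P_N(u_1-u_2)\|_{L^{\infty}([0,T],L^2)}\|U_1\|_{X}$ appearing on the first two lines of \eqref{Lipschitzlarghigh}; these encode respectively the difference of gauge transforms at frequency $\geq N$ and the low-frequency Lipschitz information on $u$ that could not be recovered through $w$ alone. The remaining pieces $A_N$, $B_N$, $D_N$, $E_N$ each contain a high-frequency exponential factor of the form $P_{\geq N}W$, $P_{\geq N}(e^{iF/2})$, $P_{\geq 2N}(e^{iP_N F/2})$, or $1 - e^{-iQ_N F/2}$, which, thanks to Lemma \ref{lemmeexphigh} in the Appendix, contributes a factor $1/\sqrt{N}$. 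Their Lipschitz estimates \eqref{AN1}, \eqref{BN1}, \eqref{DN1}, \eqref{EN1} are all of the form $\frac{C}{\sqrt{N}}\|U_1-U_2\|_X \cdot p(\|U_1\|_X,\|U_2\|_X)$ with $p$ a polynomial of degree at most three, and summing these reproduces the third line of \eqref{Lipschitzlarghigh}.

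The main conceptual obstacle has already been overcome prior to the lemma statement: designing a frequency-localized decomposition of $P_{\geq 3N} u$ in which one term ($C_N$) carries the essential information about $w$ (uncontrolled in $N$) while all other terms inherit a $1/\sqrt{N}$ smallness through the high-frequency behavior of the gauge exponential $e^{iF/2}$. The proof of the lemma itself amounts to collecting the already-established Lipschitz bounds and observing that the sum of the contributions from $A_N, B_N, D_N, E_N$ factors cleanly as an overall $1/\sqrt{N}$ multiplied by a degree-three polynomial in the $X$-norms of $U_1$ and $U_2$, which matches the structure of the third line of \eqref{Lipschitzlarghigh} verbatim.
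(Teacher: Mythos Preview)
Your proposal is correct and matches the paper's approach exactly: the paper itself simply states that ``Gathering the estimates \eqref{AN1}--\eqref{EN1} results in'' the lemma, with no additional argument. Your explanation of why $C_N$ is the crucial term carrying the $\|w_1-w_2\|_{X_2}$ and $\|P_N(u_1-u_2)\|_{L^\infty L^2}\|U_1\|_X$ contributions, while the other four terms each inherit a $1/\sqrt{N}$ factor from Lemma~\ref{lemmeexphigh}, is a faithful account of what the preceding computations accomplish.
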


\subsection{Large data global estimates}
From Lemmas \ref{estimatesw},   \ref{estimLipschilowA}, \ref{lemmaestimPNL2L4},  and  \ref{soluBO2} we have
\begin{equation*}
\begin{split}
&\|w \|_{X_2}\leq \|w_0 \|_{L^2}+CT^{\theta}\big( \|g \|_{L^{2}([0,T],L^2)}+\|g \|_{L^{2}([0,T],L^2)}^2+\|(u,w) \|_{X}^2+\|(u,w) \|_{X}^3\big),\\
&\|P_{\geq N}u \|_{L^{\infty}([0,T],L^2)}+\|P_{\geq N}u \|_{L^{4}([0,T]\times \T)}\leq \|w \|_{X_2}+\frac{C}{\sqrt{N}}\|(u,w) \|_{X}^2,\\
&\|u \|_{X^{-1,1}_T}\leq C\|u_0 \|_{L^2}+\| u \|_{L^4([0,T],L^4)}^2+\|g \|_{L^2([0,T],L^2)},
\end{split}
\end{equation*}
and
\begin{equation*}
\begin{split}
\|P_Nu \|_{L^{\infty}([0,T],L^2)}&+\|P_Nu \|_{L^{4}([0,T]\times \T)}\\
&\leq C(N) \|u_0 \|_{L^2}+C(N) \|g \|_{L^2([0,T]\times \T)}+C(N)T^{1/2}\|u \|_{L^{4}([0,T]\times \T)}^2.
\end{split}
\end{equation*}

Since the term $ \| u \|_{L^4([0,T],L^4)}^2$ in the third inequality has not factor of $T$ or $N^{-1/2}$  we need to combine the second and third estimates 
to obtain
\begin{equation*}
\begin{split}
\|u \|_{X_1}&\leq C(N)\big[\|u_0 \|_{L^2}+\|g \|_{L^2([0,T],L^2)}\big]+\|w \|_{X_2}+\frac{C}{\sqrt{N}}\|(u,w) \|_{X}^2+C(N)T^{1/2}\|u \|_{L^{4}([0,T]\times \T)}^2\\
&\;\;\;+\Big(C(N)\big[\|u_0 \|_{L^2}+\|g \|_{L^2([0,T],L^2)}\big]+\|w \|_{X_2}+\frac{C}{\sqrt{N}}\|(u,w) \|_{X}^2+C(N)T^{1/2}\|u \|_{L^{4}([0,T]\times \T)}^2\Big)^2.
\end{split}
\end{equation*}

Notice that $\|w_0 \|_{L^2}\leq \|u_0 \|_{L^2}$ and since we assume that $0\leq T\leq 1$, $N\in N^*$, we have
\begin{equation}\label{estimglobavant}
\begin{split}
\|(u,w) \|_{X}&\leq C(N)\big[\|u_0 \|_{L^2}+\|g \|_{L^2([0,T],L^2)}+\|g \|_{L^2([0,T],L^2)}^2\big]\\
&\;\;\;+\big(CT^{\theta}+\frac{C}{\sqrt{N}}+C(N)T^{1/2}\big)\big(\|(u,w) \|_{X}^2+\|(u,w) \|_{X}^3\big)\\
&\;\;\;+(C(N)\big[\|u_0 \|_{L^2}+\|g \|_{L^2([0,T],L^2)}+\|g \|_{L^2([0,T],L^2)}^2\big]\\
&\;\;\;+\big(CT^{\theta}+\frac{C}{\sqrt{N}}+C(N)T^{1/2}\big)\big(\|(u,w) \|_{X}^2+\|(u,w) \|_{X}^3)\Big)^2\\
&\leq C(N)\big[\|u_0 \|_{L^2}+\|u_0 \|_{L^2}^2+\|g \|_{L^2([0,T],L^2)}+\|g \|_{L^2([0,T],L^2)}^4\big]\\
&\;\;\;+\big(CT^{\theta}+\frac{C}{\sqrt{N}}+C(N)T^{1/2}\big)\big(\|(u,w) \|_{X}+\|(u,w) \|_{X}^6\big).
\end{split}
\end{equation}

With this estimate at hand we proceed to establish a uniform Lipschitz bound for large data.

We can now use a bootstrap argument (see for instance Lemma 2.2 of \cite{BahouriGerard}) using Lemma \ref{continuiteXt} for the continuity and limit in zero. 
Let $R>0$ such that $\|u_0 \|_{L^2}+\|g \|_{L^2([0,T],L^2)}\leq R$. We first pick $N$ large enough and then $T$ small enough (only depending on $R$ and universal constants) 
to get by a boot strap
\begin{equation*}
\|(u,w) \|_{X}\leq C(R)\big[\|u_0 \|_{L^2}+\|g \|_{L^2([0,T],L^2)}\big].
\end{equation*}
This gives the expected result for times $T$ small enough (the smallness only depending on $R$, i.e. $T\leq C(R)$). To globalize the result, we use the energy estimate
 (for $t$ positive or negative), obtained by multiplying the equation by $u$ and integration by parts, and which is valid for smooth solutions
\begin{equation*}
\|u(t) \|_{L^2}^2=\|u(0) \|_{L^2}^2-2\int_0^t  gu~ds.
\end{equation*}
By Cauchy-Schwarz inequality,
\begin{equation*}
\|u(t) \|_{L^2}^2\leq \|u(0) \|_{L^2}^2+\|g \|_{L^2([0,t],L^2)}^2+\frac{1}{2}\int_0^t  \|u(s) \|_{L^2}^2~ds.
\end{equation*}
We conclude by Gronwall's lemma that the $L^2$ norm is bounded on every compact interval.

\subsection{Large data Lipschitz estimates}

We assume $\|u_0 \|_{L^2}+\|g \|_{L^2([0,T],L^2)}\leq R$. From the previous section we have
\begin{equation*}
\|U \|_{X}\leq C(R).
\end{equation*}
Thus the estimates \eqref{estimwlip} and  \eqref{estimLipschilow} yield
\begin{equation}\label{largeA}
\|w_1-w_2 \|_{X_2}
\leq \|w_{0,1}-w_{0,2} \|_{L^2}+C(R)\|g_1-g_2 \|_{L^2([0,T]\times \T)}+C(R)T^{\theta}\|U_1-U_2 \|_{X}
\end{equation}
and
\begin{equation}\label{largeB}
\begin{split}
&\|P_N(u_1-u_2) \|_{L^{\infty}([0,T],L^2)}+\|P_N(u_1-u_2) \|_{L^{4}([0,T]\times \T)}\\
&\leq C(N) \|u_{0,1}-u_{0,2} \|_{L^2}+C(N) \|g_1-g_2 \|_{L^2([0,T]\times \T)}+C(N,R)T^{1/2}\|U_1-U_2 \|_{X}.
\end{split}
\end{equation}

The estimate \eqref{Lipschitzlarghigh} can be rewritten as
\begin{equation}\label{largeC}
\begin{split}
&\|P_{\geq N}(u_1-u_2) \|_{L^{\infty}([0,T],L^2)}+\|P_{\geq N}(u_1-u_2) \|_{L^{4}([0,T]\times \T)}\\
&\leq C(R)\|P_N(u_1-u_2) \|_{L^{\infty}([0,T],L^2)}+C\|w_1-w_2 \|_{X_2}+\frac{C(R)}{\sqrt{N}} \|U_1-U_2 \|_{X}.
\end{split}
\end{equation}
and from Lemma \ref{lipschequ} it follows that
\begin{equation}\label{largeD}
\|u_1-u_2 \|_{X^{-1,1}_T}\leq \|u_{0,1}-u_{0,2}\|_{L^2}+ \|g_1-g_2 \|_{L^{2}([0,T]\times \T)}+C(R)\| u_1-u_2 \|_{L^{4}([0,T]\times \T)}.
\end{equation}

Therefore, combining the estimates \eqref{largeA}-\eqref{largeD} leads to
\begin{equation*}
\begin{split}
\|U_1-U_2 \|_{X}&\leq C(N,R)\big(\|u_{0,1}-u_{0,2} \|_{L^2}+\|g_1-g_2 \|_{L^2([0,T]\times \T)}\big)\\
&\;\;\;+\big(C(N,R)T^{1/2}+\frac{C(R)}{\sqrt{N}}+C(R)T^{\theta} \big)\|U_1-U_2 \|_{X}.
\end{split}
\end{equation*}
So, by choosing $N$ large enough only depending on $R$ and then $T$ small enough (only depending on $N,R$ so only on $R$), we get
\begin{equation}\label{finalLipsch}
\|U_1-U_2 \|_{X}\leq C(N,R)\big(\|u_{0,1}-u_{0,2} \|_{L^2}+\|g_1-g_2 \|_{L^2([0,T]\times \T)}\big).
\end{equation}

The estimate \eqref{finalLipsch} allows us to establish the existence and uniqueness of the solution of the problem \eqref{sourceivp}. The continuity
of the flow follows by  Bona-Smith argument. This completes the proof of Theorem \ref{sourcelwp}.

\section{Control Results}

In this section we will prove Theorem \ref{control}.  First we will consider the damped equation and use the theory established
in the previous section to this case. 

\subsection{Case of the damped equation}

We consider the equation
\begin{equation}
\begin{cases}
\partial_t u+\H \partial_x^2 u=\frac{1}{2}\partial_x (u^2)-\G\G^{*}u\\
u(T)=0
\end{cases}
\end{equation}
We view the damping term as a source term, setting $g=-\G\G^{*}u$. Thus, we have 
$$
\|g \|_{L^2([0,T],L^2)}\leq CT^{1/2} \|u \|_{L^{\infty}([0,T],L^2)}\leq CT^{1/2}\|(u,w) \|_{X}.
$$ 

Hence estimate (\ref{estimglobavant}) becomes
\bna
\|(u,w) \|_{X}&\leq &C(N)\big[\|u_0 \|_{L^2}+\|u_0 \|_{L^2}^2\big]\\
&&+\big(CT^{\theta}+\frac{C}{\sqrt{N}}+C(N)T^{1/2}\big)\big(\|(u,w) \|_{X}+\|(u,w) \|_{X}^6\big).
\ena

We can conclude similarly by bootstrap for small times (only depending on the size). This gives the expected result for small times. For large times, 
we use the energy estimate (for $t$ positive or negative)
\bna
\|u(t) \|_{L^2}^2=\|u(0) \|_{L^2}^2-2\int_0^t  \|\G u(s) \|_{L^2}^2~ds.
\ena
For positive times, the energy is decreasing, for negative times, we conclude by Gronwall's  lemma.

We conclude similarly for the Lipschitz estimates.

Now we proceed to prove Theorem \ref{control}. We want to control the following problem
\begin{equation*}
\begin{cases}
\partial_t u+\H \partial_x^2 u=u\partial_x u-\G\G^{*}h,\\
u(T)=0
\end{cases}
\end{equation*}
where $\G$ is defined as  in \eqref{operG}.

We seek a control of the form of a solution of ($h_0$ is real valued with zero mean)
\begin{equation*}
\begin{cases}
\partial_t h+\H \partial_x^2 h=0\\
h(0)=h_0.
\end{cases}
\end{equation*}

We denote by B the nonlinear operator defined by $Bh_0=u(0)$.
Let $u_L$ be the solution of
\begin{equation*}
\begin{cases}
\partial_t u_L+\H \partial_x^2 u_L=-\G\G^{*}h\\
u_L(T)=0.
\end{cases}
\end{equation*}
The linear operator from $L^2(\T)$ to itself defined by $Lh_0=u_L(0)$ is the HUM operator, which is a bijection of $L^2(\T)$ by the observability inequality of Linares-Ortega \cite{LO}.
We note $Wh_0=u(0)=u_L(0)+Kh_0=Lh_0+Kh_0$. \refp{estimlinaru} can be written as
\begin{equation*}
\|Kh_0 \|_{L^2}\leq C \|h_0 \|_{L^2}^2
\end{equation*}
for $\|h_0 \|_{L^2}$ small enough.
Let $u_0\in L^2(\T)$. So, the objective $Wh_0=Lh_0+Kh_0=u_0$ is equivalent to  $h_0=-L^{-1}Kh_0+L^{-1}u_0$, that is $h_0$ is a fixed point of $B$ defined by $Bh_0=-L^{-1}Kh_0+L^{-1}u_0$.

But, we have 
$$
\|Bh_0 \|_{L^2}\leq C\|Kh_0 \|_{L^2}+C\|u_0 \|_{L^2}\leq C \|h_0 \|_{L^2}^2+C\|u_0 \|_{L^2}.
$$
Thus, if we denote $B_R$ the unit ball of $L^2$ of radius $R=2C\|u_0 \|_{L^2}$, $B$ sends $B_R$ into itself if $CR\leq 1/2$, that is,  if $\|u_0 \|_{L^2}$ is chosen small enough.

Moreover, estimate \refp{estimLipfinkin} shows that for two solutions of the nonhomogeneous problem $u_1$, $u_2$ coming from the controls $g_i=-\G\G^{*}h_i$ 
(recall that we have assumed $\|g_i \|_{L^{2}([0,T]\times \T)}\leq \e$, which is equivalent to $\|h_{0,i} \|_{L^2}\lesssim \e$), we have the estimates
\bna
\|Kh_{0,1}-Kh_{0,1} \|_{L^2}\leq \|u_1-u_{L,1}-(u_2-u_{L,2}) \|_{L^{\infty}([0,T],L^2)}
&\lesssim &\e\|g_1-g_2 \|_{L^2([0,T]\times \T)}.
\ena
Since $ Bh_{0,1}-Bh_{0,2}=Kh_{0,1}-Kh_{0,2}$, this means that for $\e$ small enough (that is $R=2C\|u_0 \|_{L^2}$ small enough), $B$ is contracting and reproduce $B_R$. Therefore, it has a fixed point which is the expected control.

It ends the proof of Theorem \ref{control} part (i) for small data.

\section{Study of contradicting bounded sequences of damped equations}

\subsection{Large data}
In this section $u_n$ is a solution as in  Lemma \ref{lemmacvceweak}. We assume to be proved that $(u_n, w_n)$ is bounded in $X$. The final aim is to prove the strong convergence in $L^2$ as stated in Theorem \ref{lemmacvcestrong}. We assume $[u_0]=0$ and explain the required modifications in a remark for the general case. Mainly, according to Remark \ref{rkchgevarstab}, the modifications require to apply the same arguments to $\widetilde{u}$ and to replace $\G$ by $\G_{\mu}$.

\begin{lemme}
\label{lemmacvceweak}
Suppose $u_{n,0}$ is a bounded sequence in $L^2$ with $[u_{0,n}]=0$ with associated solutions $u_n$
\bneq
\partial_t u_n+\H \partial_x^2 u_n&=&u_n\partial_x u_n(-\G\G^{*}u_n)\\
u_n(0)&=&u_{0,n}.
\eneq
Assume moreover that $\G u_n\rightarrow 0$ in $L^2([0,T]\times \T)$. Then, $u_n\rightharpoonup 0$ in $L^{2}([0,T],L^2)$ and weakly-* in $L^{\infty}([0,T],L^2)$.
\end{lemme}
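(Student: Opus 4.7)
The strategy is a weak-limit / unique-continuation argument: extract a subsequential weak limit, show it solves a limiting problem that forces it to vanish, and conclude weak convergence of the whole sequence by uniqueness of the limit.

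Step 1 (energy bound and extraction). For smooth solutions of Theorem \ref{smoothsol}, multiplying the damped equation by $u_n$ and integrating in $x$ makes the dispersive and cubic contributions vanish, producing
\[
\|u_n(t)\|^2 + 2\int_0^t \|\G^* u_n\|^2\, ds = \|u_{n,0}\|^2,
\]
which passes to $L^2$-solutions by the continuity of the flow established in Theorem \ref{sourcelwp}. Since $\|u_{n,0}\|_{L^2}$ is bounded, $u_n$ is bounded in $L^\infty([0,T],L^2)$, and up to a subsequence $u_n \rightharpoonup u^*$ weakly-$*$ in $L^\infty([0,T],L^2)$ and weakly in $L^2([0,T]\times\T)$. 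Mean conservation ($\int \G h\, dx=0$) gives $[u_n(t)]=[u_{0,n}]=0$, hence $[u^*(t)]=0$.

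Step 2 (compactness and passage to the limit). From the equation, the $L^\infty([0,T],L^2)$-bound and the fact that $u_n^2$ is bounded in $L^\infty([0,T],L^1)$, we get $\partial_t u_n$ bounded in $L^2([0,T],H^{-2}(\T))$. Aubin--Lions applied to $L^2\subset\subset H^{-1}\subset H^{-2}$ then yields strong convergence $u_n\to u^*$ in $L^2([0,T],H^{-1}(\T))$, hence pointwise a.e. along a further subsequence. Combined with the uniform $L^4([0,T]\times\T)$ bound supplied by the $X$-bound assumed in this section, we obtain $u_n^2 \rightharpoonup (u^*)^2$ weakly in $L^2([0,T]\times\T)$, which allows us to pass to the limit in the nonlinear term $\tfrac12\partial_x(u_n^2)$ in $\mathcal{D}'$. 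Since $\G u_n\to 0$ in $L^2$ also gives $\G\G^*u_n\to 0$ in $L^2$, the limit satisfies
\[
\partial_t u^* + \H\partial_x^2 u^* - u^*\partial_x u^* = 0,\qquad \G u^* = 0.
\]

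Step 3 (forcing $u^*\equiv 0$). The relation $\G u^*=0$, combined with the definition \eqref{operG} of $\G$, forces $u^*(x,t)=c(t)$ a.e. on $\omega=\{a>0\}$, where $c(t)=\int a(y)u^*(y,t)\,dy$. To propagate this from $\omega$ to the whole circle, I would apply a unique continuation property for the Benjamin--Ono equation, as used at the linear level by Linares--Ortega \cite{LO} and extended in \cite{LinaresRosierBO}, possibly preceded by a bootstrap that first improves the regularity of $u^*$ through the smoothing built into the $X$-framework and the propagation-of-regularity arguments developed earlier in the paper. This yields $u^*\equiv c(t)$ on $\T$, and then $[u^*(t)]=0$ forces $c(t)\equiv 0$, so $u^*=0$.

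Step 4 (conclusion and main difficulty). Since every subsequential weak limit is $0$, the whole sequence converges to $0$ in the two topologies claimed. The main obstacle I anticipate is Step 3: establishing unique continuation for a merely $L^\infty([0,T],L^2)$ solution is delicate, and will almost certainly require first upgrading the regularity of $u^*$ on $\omega$ via the smoothing and propagation-of-regularity tools already developed in the paper before a classical BO unique continuation statement can be invoked.
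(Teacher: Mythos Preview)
Your strategy is natural, but Step~2 contains a real gap that the paper explicitly warns about. Aubin--Lions with $L^2\subset\subset H^{-1}\subset H^{-2}$ gives, along a subsequence, $u_n(t)\to u^*(t)$ in $H^{-1}(\T)$ for a.e.\ $t$; this is \emph{not} pointwise a.e.\ convergence in $(t,x)$, so the $L^4$ bound does not let you identify the weak limit of $u_n^2$ as $(u^*)^2$. More seriously, the paper cites Molinet \cite{Molinet,MolinetBOill} precisely at this spot to recall that weak $L^2$-limits of solutions to BO-type equations need not solve the original equation: the limit can satisfy a \emph{modified} equation. So even a more refined compactness argument would have to confront this known obstruction, and your Step~3 (nonlinear unique continuation for a merely $L^\infty_tL^2_x$ solution of BO) would then be a second hard problem stacked on top of it.

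The paper avoids both difficulties by never trying to identify the nonlinear term globally. From $\G u_n\to 0$ one extracts \emph{strong} $L^2$ convergence on the damping region: with $c_n(t)=\int_{\T} a(y)u_n(y,t)\,dy$ and $r_n=u_n-c_n$, one gets $r_n\to 0$ in $L^2([0,T]\times\omega)$, hence $r_n^2\to 0$ in $L^1([0,T]\times\omega)$, and a short computation shows $\partial_x(u_n^2)\rightharpoonup 0$ in $D'(]0,T[\times\omega)$. Passing to the limit \emph{only on} $\omega$ then yields, for the weak limit $u$, the linear relations $u_{xxx}=0$ and $\H u_{xxx}=0$ on $\omega$. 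This is exactly the input to Proposition~2.8 of \cite{LinaresRosierBO} (a unique-continuation property for the Hilbert transform, not for the nonlinear BO equation), which forces $u\equiv 0$ without any regularity bootstrap or nonlinear unique continuation.
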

\begin{remark}
We could certainly get rid of the term $\G\G^{*} u_n$ (converging to $0$ in $L^2([0,T]\times L^2$)) and work with solutions of the free equation. Yet, we have chosen to keep it since it 
did not perturb too much the analysis and it seemed easier than proving a general perturbation theorem.
\end{remark}
\bnp
We assume $u_n\rightharpoonup u$ in $L^2([0,T],L^2)$ up to subsequence. The only problem to use directly Proposition 2.8 of \cite{LinaresRosierBO} is that because of the nonlinear term, it is not clear that $u$ is solution of the Benjamin-Ono equation (see Molinet \cite{Molinet,MolinetBOill} for some cases where the limit equation is a modified equation).

Denote the sequence $c_n(t)=\int_{\T}a(y)u_n(x,t)dy$, bounded in $L^2([0,T])$ and weakly convergent to $c(t)$. Denote $r_n=u_n-c_n$. We have $a(x)\big[r_n\big]\rightarrow 0$, in $L^2([0,T]\times \T)$. In particular, $r_n\rightarrow 0$ in $L^2([0,T]\times \omega)$ and $r_n^2\rightarrow 0$ in $L^1([0,T]\times \omega)$. Therefore, $r_n^2\rightharpoonup 0$ in the distributional sense of $D'(]0,T[\times \omega)$ and hence it is the same for $\partial_x r_n^2$. But we have
\bna
\partial_x r_n^2&=&\partial_x u_n^2-2\partial_x (c_nu_n)+\partial_x c_n^2=\partial_x u_n^2-2\partial_x (c_nu_n)\\
&=&\partial_x u_n^2-2\partial_x (c_nr_n)
\ena
But, we have  $r_n\rightarrow 0$ in $L^2([0,T]\times \omega)$ so $c_nr_n\rightarrow 0$ in $L^1([0,T]\times \omega)$ and so in the distributional sense in $D'(]0,T[\times \omega)$, and the same for $\partial_x (c_nr_n)$.

So, we conclude that $\partial_x u_n^2\rightharpoonup 0$ in the distributional sense of $D'(]0,T[\times \omega)$. In particular, the weak limit $u$  is solution in the sense of distribution $D'(]0,T[\times \omega)$ of
\bna
u_{t}+\H u_{xx}=\dot{c}(t)+\H u_{xx}=0.
\ena
Therefore, we have $\H u_{xxx} =0$ in the distributional sense of $D'(]0,T[\times \omega)$.

Moreover, since $r_n$ converges to zero in the distributional sense of $D'(]0,T[\times \omega)$, it is the same for $r_{n,xxx}=u_{n,xxx}$ and we have $u_{xxx} =0$ in the distributional sense of $D'(]0,T[\times \omega)$.

We conclude that $u\equiv 0$ as in Linares-Rosier \cite{LinaresRosierBO} Proposition 2.8.
\enp

\begin{remark}
In the case of $[u_{0,n}]=\mu\neq 0$, we can, for instance, set $\underline{u}_n=u_n-\mu$ and obtain similarly $u_n \rightharpoonup \mu$ using that $\underline{u}_n$ is solution of $\partial_t \underline{u}_n+\H \partial_x^2 \underline{u}_n=\mu\partial_x \underline{u}_n+ \underline{u}_n\partial_x \underline{u}_n-\G\G^{*}\underline{u}_n$.
\end{remark}

\begin{lemme}
\label{lmweakwn}
$u_n\rightharpoonup 0$ in $L^2([0,T],L^2)$ implies $w_n\rightharpoonup 0$ in $L^2([0,T],L^2)$ with $w_n=P_+(u_ne^{-\frac{i}{2}F_n})$ (for solutions). The same holds for $u_ne^{-\frac{i}{2}F_n}$.
\end{lemme}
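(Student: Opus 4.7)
The main idea is to show that the phase factor $e^{-\frac{i}{2}F_n}$ converges strongly to $1$, so that $u_n e^{-\frac{i}{2}F_n}$ differs from $u_n$ by a term that is negligible in a strong sense. Since $P_+$ is bounded on $L^2$, weak convergence of $u_n e^{-\frac{i}{2}F_n}$ transfers to $w_n$. The work is in establishing the strong convergence $e^{-\frac{i}{2}F_n}\to 1$, which will come from compactness of $F_n=\partial_x^{-1}u_n$.

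\medskip

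\textbf{Step 1 (bounds on $F_n$).} The hypothesis $[u_{0,n}]=0$ propagates to $[u_n(t)]=0$, so $F_n=\partial_x^{-1}u_n$ is well defined. Since $(u_n,w_n)$ is bounded in $X$, $u_n$ is bounded in $L^\infty([0,T],L^2)\cap L^4([0,T]\times\T)$, hence $F_n$ is bounded in $L^\infty([0,T],H^1)$. Applying $\partial_x^{-1}$ to the damped equation gives
\begin{equation*}
\partial_t F_n = -\H\partial_x u_n + \tfrac12 u_n^2 - \tfrac12 P_0(u_n^2) - \partial_x^{-1}(\G\G^{*}u_n),
\end{equation*}
where each right-hand side term is bounded in $L^2([0,T],H^{-1})$: the first by the $L^\infty L^2$ bound on $u_n$, the second by the $L^4_{x,t}$ bound, the third by a uniform constant, and the last because $\G\G^{*}$ maps $L^2$ to functions of zero mean bounded in $L^\infty L^2$.

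\medskip

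\textbf{Step 2 (compactness of $F_n$).} With $F_n$ bounded in $L^\infty([0,T],H^1)$ and $\partial_t F_n$ bounded in $L^2([0,T],H^{-1})$, the Aubin--Lions lemma (using $H^1\hookrightarrow\hookrightarrow H^{3/4}$) yields relative compactness of $F_n$ in $L^2([0,T],H^{3/4})$. Since $H^{3/4}(\T)\hookrightarrow L^\infty(\T)$, we get that $F_n$ is relatively compact in $L^2([0,T],L^\infty(\T))$. On the other hand $u_n\rightharpoonup 0$ in $L^2([0,T],L^2)$ implies $F_n\rightharpoonup 0$ in $L^2([0,T],H^1)$, so the only possible limit is $0$, and hence
\begin{equation*}
F_n \longrightarrow 0 \quad \text{strongly in } L^2([0,T],L^\infty(\T)).
\end{equation*}

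\medskip

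\textbf{Step 3 (convergence of the phase and conclusion).} Because $F_n$ is uniformly bounded in $L^\infty_{x,t}$ (by Sobolev embedding) and $x\mapsto e^{-ix/2}$ is globally Lipschitz on bounded sets, $\|e^{-\frac{i}{2}F_n}-1\|_{L^\infty_x}\lesssim \|F_n\|_{L^\infty_x}$, so
\begin{equation*}
e^{-\frac{i}{2}F_n}-1 \longrightarrow 0 \quad \text{in } L^2([0,T],L^\infty(\T)).
\end{equation*}
Combined with the uniform bound of $u_n$ in $L^\infty([0,T],L^2)$, the product $u_n(e^{-\frac{i}{2}F_n}-1)$ converges strongly to $0$ in $L^2([0,T],L^2)$. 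Writing
\begin{equation*}
u_n e^{-\frac{i}{2}F_n} = u_n + u_n\bigl(e^{-\frac{i}{2}F_n}-1\bigr),
\end{equation*}
the first term converges weakly to $0$ by assumption and the second converges to $0$ strongly, so $u_n e^{-\frac{i}{2}F_n}\rightharpoonup 0$ in $L^2([0,T],L^2)$. Applying the $L^2$-continuous projector $P_+$ yields $w_n=P_+(u_n e^{-\frac{i}{2}F_n})\rightharpoonup 0$ in $L^2([0,T],L^2)$, completing the proof.

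\medskip

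\textbf{Main obstacle.} The delicate point is Step 2: extracting strong convergence of $F_n$ in a norm strong enough ($L^2_tL^\infty_x$) to absorb the multiplication by $u_n$ bounded only in $L^\infty_tL^2_x$. This requires both the $H^1$-energy bound and a quantitative bound on $\partial_t F_n$, which in turn uses crucially the $L^4_{x,t}$ component of the $X$-norm to handle the nonlinearity $u_n^2$.
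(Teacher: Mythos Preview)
Your proof is correct and follows essentially the same route as the paper: bound $F_n$ in $L^\infty_t H^1_x$, bound $\partial_t F_n$ in a negative Sobolev space, apply Aubin--Lions to get strong convergence $e^{-\frac{i}{2}F_n}\to 1$, and then split $u_n e^{-\frac{i}{2}F_n}=u_n+u_n(e^{-\frac{i}{2}F_n}-1)$ into a weakly vanishing and a strongly vanishing part. The only minor differences are that the paper bounds $\partial_t F_n$ in $L^\infty_t H^{-1}_x$ (using $u_n^2\in L^\infty_t L^1_x\hookrightarrow L^\infty_t H^{-1}_x$) rather than your $L^2_t H^{-1}_x$, and correspondingly extracts $e^{-\frac{i}{2}F_n}\to 1$ in $L^\infty_t L^\infty_x$ rather than your $L^2_t L^\infty_x$; this stronger convergence is reused later in the paper (in the proof of Theorem~\ref{lemmacvcestrong}), so you may want to upgrade your Step~2 accordingly, but for the present lemma your version suffices.
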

\bnp
$F_n$ weakly-* converges to $0$ in $L^{\infty}([0,T],H^1)$ and $\partial_t F_n$ is bounded in $L^{\infty}([0,T],H^{-1})$ and hence by Aubin-Lions lemma, $F_n$ converges strongly to $0$ in $L^{\infty}([0,T],L^{\infty})$. By the mean value theorem, $e^{-\frac{i}{2}F_n}$ converges strongly to $1$ in $L^{\infty}([0,T],L^{\infty})$. Actually, $\partial_t F_n=-\H \partial_x^2 F_n+\frac{1}{2} (\partial_x F_n)^2-\frac{1}{2}P_0 (F_{n,x}^2)+\partial_x^{-1}\G\G^{*} u_n$ is bounded in $L^{\infty}([0,T],H^{-1})$ since $\partial_x^2 F_n=\partial_x u_n$ and $(\partial_x F_n)^2=u_n^2$ is bounded in $L^{\infty}([0,T],L^{1})\subset L^{\infty}([0,T],H^{-1})$.

Let $\varphi\in L^2([0,T],L^2)$. We write
\bna
\iint_{[0,T]\times \T}w_n \overline{\varphi}=\iint_{[0,T]\times \T}u_ne^{-\frac{i}{2}F_n} P_+\overline{\varphi}=\iint_{[0,T]\times \T}u_n(e^{-\frac{i}{2}F_n}-1) P_+\overline{\varphi}+\iint_{[0,T]\times \T}u_n P_+\overline{\varphi}.
\ena
The first term converges to $0$ by strong convergence of $e^{-\frac{i}{2}F_n}-1$ in $L^{\infty}([0,T],L^{\infty})$. The second converges to $0$ by weak convergence. The same proof without $P_+$ gives the result for $u_ne^{-\frac{i}{2}F_n}$.
\enp
\begin{lemme}\label{lmcvcecn}
$c_n(t)=\int a(y)u_n(y,t)dy$ is bounded in $H^{1}([0,T])$ and converges strongly to $0$ in $L^2([0,T])$.
\end{lemme}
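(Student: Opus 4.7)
The plan is to establish uniform $H^1([0,T])$ boundedness of $c_n$ by differentiating under the integral and using the equation, then deduce strong $L^2$ convergence to $0$ via the compact embedding $H^1([0,T])\hookrightarrow L^2([0,T])$ together with the weak convergence $u_n\rightharpoonup 0$ already obtained in Lemma \ref{lemmacvceweak}.

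First, the $L^\infty$ bound on $c_n$ is immediate: $|c_n(t)|\leq \|a\|_{L^2}\|u_n(t)\|_{L^2}$, and $u_n$ is bounded in $L^\infty([0,T],L^2)$. For the derivative, I compute
\begin{equation*}
\dot c_n(t)=\int_{\T}a(y)\partial_t u_n(y,t)\,dy=\int_\T a(y)\bigl[-\H\partial_x^2 u_n+u_n\partial_x u_n-\G\G^{*}u_n\bigr]\,dy.
\end{equation*}
I then integrate by parts in each term to move all derivatives onto $a$: the first term becomes $-\int (\H\partial_x^2 a)\,u_n\,dy$, bounded by $\|a\|_{H^2}\|u_n(t)\|_{L^2}$; the second, using $u_n\partial_x u_n=\tfrac12\partial_x(u_n^2)$, becomes $-\tfrac12\int a'(y)u_n^2\,dy$, bounded by $\|a'\|_{L^\infty}\|u_n(t)\|_{L^2}^2$; for the third, since $\G$ is self-adjoint and $\G a=a(a-\|a\|_{L^2}^2)\in L^\infty$, I write $\int a\,\G\G^*u_n\,dy=\int(\G a)\,(\G u_n)\,dy$, bounded by $\|\G a\|_{L^2}\|\G u_n(t)\|_{L^2}$. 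Each of these bounds is in $L^\infty([0,T])$ (the last one is even in $L^2([0,T])$ and tends to zero), so $\dot c_n$ is uniformly bounded in $L^2([0,T])$, hence $(c_n)$ is uniformly bounded in $H^1([0,T])$.

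By the Rellich compact embedding $H^1([0,T])\hookrightarrow L^2([0,T])$, along a subsequence $c_n\to c$ strongly in $L^2([0,T])$ for some $c$. To identify the limit, I invoke Lemma \ref{lemmacvceweak}, which gives $u_n\rightharpoonup 0$ in $L^2([0,T],L^2(\T))$. For any test function $\varphi\in L^2([0,T])$ one has
\begin{equation*}
\int_0^T \varphi(t)c_n(t)\,dt=\iint_{[0,T]\times \T}\varphi(t)a(y)\,u_n(y,t)\,dy\,dt\tend{n}{\infty} 0,
\end{equation*}
since $\varphi(t)a(y)\in L^2([0,T]\times \T)$. Thus $c_n\rightharpoonup 0$ in $L^2([0,T])$, which forces the strong limit $c$ to be zero. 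As this argument applies to every subsequence, the full sequence $c_n$ converges strongly to $0$ in $L^2([0,T])$.

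There is no real obstacle here; the only minor point of care is the nonlinear term, which is handled cleanly because the integration by parts against $a$ eliminates the derivative on $u_n$ and yields an $L^\infty$-in-time bound via the $L^2$ conservation-type estimate on $u_n$.
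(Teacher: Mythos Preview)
Your proof is correct and follows essentially the same route as the paper: differentiate $c_n$ under the integral sign, integrate by parts in each term of the equation to get a uniform $L^2([0,T])$ bound on $\dot c_n$, then use the compact embedding $H^1([0,T])\hookrightarrow L^2([0,T])$ together with the weak convergence $u_n\rightharpoonup 0$ from Lemma~\ref{lemmacvceweak} to conclude. The only cosmetic differences are that the paper bounds the damping term directly by $C\|u_n(t)\|_{L^2}$ rather than rewriting it via the self-adjointness of $\G$, and it compresses your last two paragraphs into the phrase ``by Sobolev embedding and weak convergence''.
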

\bnp
We compute
\bna
\dot{c_n}(t)&=&\int_{\T} a(y)\big[\H \partial_{y}^2 u_n(t,y)+\frac{1}{2}\partial_y(u_n(t,y)^2)+\G\G^{*} u_n\big]dy\\
&=&-\int_{\T} \H \partial_{y}^2a(y) u_n(t,y)dy-\frac{1}{2}\int_{\T} \partial_ya(y)(u_n(t,y)^2)dy+\int_{\T} a(y)\G\G^{*} u_ndy.
\ena
\bna
\big|\dot{c_n}(t)\big|&\leq &C\|u_n(t) \|_{L^2(\T)} +C\|u_n(t) \|_{L^2(\T)}^2
\ena
So $\|\dot{c_n}(t) \|_{L^2(]0,T[)}\leq C\|u_n \|_{L^2([0,T],L^2)}+C\|u_n \|_{L^{\infty}([0,T],L^2)}^2\leq C$. Lemma \ref{lmcvcecn} follows  by Sobolev embedding and weak 
convergence to $0$ of $u_n$.
\enp
\begin{remark}
For Lemma \ref{lmweakwn} and \ref{lmcvcecn}, the same argument applies with $\G$ replaced by $\G_{\mu}$ since we only use the boundedness as an operator of $L^{\infty}([0,T],L^2)$.
\end{remark}

\begin{lemme}
\label{lmundonnewn}
$\G u_n\rightarrow 0$ in $L^2([0,T]\times \T)$ implies that $a(x)w_n\rightarrow 0$ in $L^2([0,T]\times \T)$.
\end{lemme}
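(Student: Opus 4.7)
My plan is to exploit the hypothesis via the identity $\G u_n = a u_n - a c_n$, then to reduce from $w_n$ to $P_+ u_n$ using that the gauge factor is close to $1$, and finally to split $a P_+ u_n$ through a commutator identity, handling the commutator term by an Aubin--Lions compactness argument.

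First, I will observe that $\G u_n = a(x) u_n(x,t) - a(x) c_n(t)$. Since Lemma \ref{lmcvcecn} gives $c_n \to 0$ strongly in $L^2([0,T])$ and $a \in L^\infty$, the assumption $\G u_n \to 0$ in $L^2([0,T]\times\T)$ yields $a u_n \to 0$ strongly in $L^2([0,T]\times\T)$. Next, the proof of Lemma \ref{lmweakwn} provides $F_n \to 0$ in $L^\infty([0,T]\times\T)$, so $e^{-\frac{i}{2}F_n} - 1 \to 0$ in that space. Combined with boundedness of $u_n$ in $L^\infty([0,T], L^2)$ coming from the $X$-bound, this gives $u_n(e^{-\frac{i}{2}F_n} - 1) \to 0$ in $L^\infty([0,T], L^2)$. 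Applying the $L^2$-bounded projector $P_+$ and multiplying by $a$, the task reduces to showing $a P_+ u_n \to 0$ in $L^2([0,T]\times\T)$.

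I then decompose $a P_+ u_n = P_+(a u_n) + [a, P_+] u_n$. The first piece converges to $0$ in $L^2$ by the first step and the $L^2$-continuity of $P_+$. For the commutator term, the key fact is that since $a \in C^\infty(\T)$ and $P_+ = \frac{1}{2}(I + i \H) - \frac{1}{2} P_0$, the operator $[a, P_+]$ is a classical pseudodifferential operator of order $-1$, continuous from $H^s$ to $H^{s+1}$ for every $s \in \R$. Hence $[a, P_+] u_n$ is bounded in $L^\infty([0,T], H^1)$. Using $\partial_t u_n = -\H \partial_x^2 u_n + \frac{1}{2} \partial_x(u_n^2) - \G\G^* u_n$ together with the $X$-bound (which yields $u_n^2 \in L^2([0,T], L^2)$ and $\G\G^* u_n \in L^\infty([0,T], L^2)$), one checks that $\partial_t u_n$ is bounded in $L^2([0,T], H^{-2})$, so $\partial_t [a, P_+] u_n = [a, P_+] \partial_t u_n$ is bounded in $L^2([0,T], H^{-1})$. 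The Aubin--Lions lemma with $H^1 \Subset L^2 \hookrightarrow H^{-1}$ then yields relative compactness of $\{[a, P_+] u_n\}$ in $L^2([0,T], L^2)$. Since $u_n \rightharpoonup 0$ in $L^2([0,T], L^2)$ by Lemma \ref{lemmacvceweak}, weak continuity of $[a, P_+]$ on that space gives $[a, P_+] u_n \rightharpoonup 0$; combined with relative compactness this forces strong convergence to $0$.

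The main obstacle is the commutator step: justifying rigorously that $[a, P_+]$ gains one derivative on the periodic torus (via the Calder\'on commutator theorem applied to $[a, \H]$ with smooth $a$) and checking that the time regularity extracted from the Benjamin--Ono equation is compatible with the Aubin--Lions hypotheses. Everything else amounts to routine manipulations with the $L^2$-continuity of $P_+$ and the uniform bounds provided by the $X$-norm.
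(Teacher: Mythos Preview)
Your argument is correct and follows the same overall strategy as the paper: deduce $a u_n \to 0$ in $L^2$ from $\G u_n \to 0$ and Lemma~\ref{lmcvcecn}, write $a w_n$ as a main piece plus a commutator with $P_+$, and kill the commutator term via Aubin--Lions together with the one-derivative gain of $[a,P_+]$. The difference lies in where the gauge factor $e^{-\frac{i}{2}F_n}$ is removed. You strip it off \emph{before} the commutator step, using $e^{-\frac{i}{2}F_n}\to 1$ in $L^\infty$ to reduce $a w_n$ to $a P_+ u_n$, and then apply Aubin--Lions directly to $[a,P_+]u_n$ with the triple $H^1\Subset L^2\hookrightarrow H^{-1}$; the needed bound on $\partial_t u_n$ comes straight from the equation. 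The paper instead keeps the full product $u_n e^{-\frac{i}{2}F_n}$ inside the commutator, invokes the smoothing estimate $\|[a,P_+]f\|_{L^2}\lesssim \|f\|_{H^{-1}}$, and then runs Aubin--Lions on $u_n e^{-\frac{i}{2}F_n}$ in the pair $L^2\Subset H^{-1}$; to bound $\partial_t(u_n e^{-\frac{i}{2}F_n})$ it uses the observation $u_n e^{-\frac{i}{2}F_n}=2i\partial_x e^{-\frac{i}{2}F_n}$ and the product estimate $H^{-1}\cdot H^1\hookrightarrow H^{-1}$ for $(\partial_t F_n)e^{-\frac{i}{2}F_n}$. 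Your route is a bit more direct, since it avoids this algebraic trick and the product estimate; the paper's route is marginally more self-contained in that it works with $w_n$ throughout without ever passing through $P_+u_n$.
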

\bnp
Lemma \ref{lmcvcecn} implies that we have in fact $a(x)u_n\rightarrow 0$ in $L^2([0,T]\times \T)$ and so $a(x)e^{-\frac{i}{2}F_n}u_n\rightarrow 0$ in $L^2([0,T]\times \T)$, and the same for $P_+(a(x)e^{-\frac{i}{2}F_n}u_n)$.

We write
\bna
a(x)w_n=a(x)P_+(u_ne^{-\frac{i}{2}F_n})=P_+(a(x)u_ne^{-\frac{i}{2}F_n})+[a(x),P_+]\big(u_ne^{-\frac{i}{2}F_n}\big).
\ena
So, it remains to prove that the second term in the right hand side is strongly convergent in $L^2([0,T]\times \T)$.

Using Lemma 2.5 of Linares-Rosier \cite{LinaresRosierBO}, we get
\begin{equation*}
\|[a(x),P_+]\big(u_ne^{-\frac{i}{2}F_n}\big)\|_{L^2([0,T]\times \T)}\leq \|u_ne^{-\frac{i}{2}F_n} \|_{L^2([0,T],H^{-1})}.
\end{equation*}

Using Lemma \ref{lmweakwn}, we get that $u_ne^{-\frac{i}{2}F_n}\rightharpoonup 0$ in $L^2([0,T]\times \T)$. To apply Aubin-Lions' lemma, we just need to prove that
 $\partial_t \big(u_ne^{-\frac{i}{2}F_n}\big)$ is bounded in some space $L^p([0,T],H^{-s})$, $s\in \R$. We also notice that $u_ne^{-\frac{i}{2}F_n}=2i\partial_x e^{-\frac{i}{2}F_n}$, so , we just have to prove that $\partial_t e^{-\frac{i}{2}F_n}=-\frac{i}{2}(\partial_t F_n )e^{-\frac{i}{2}F_n}$ is bounded in some $L^p([0,T],H^{-s})$. But, we have shown  in the proof of Lemma \ref{lmweakwn} that $\partial_t F_n$ was bounded in $L^{\infty}([0,T],H^{-1})$ and we notice easily that $e^{-\frac{i}{2}F_n}$ is bounded in $L^{\infty}([0,T],H^{1})$, since $\partial_x e^{-\frac{i}{2}F_n}=-\frac{i}{2}u_ne^{-\frac{i}{2}F_n}$ is bounded in $L^{\infty}([0,T],L^2)$. Thus by product, $\partial_t e^{-\frac{i}{2}F_n}$ is bounded in $L^{\infty}([0,T],H^{-1})$.
\enp
\begin{remark}
Lemma \ref{lmundonnewn} has to be changed when $\G$ is replaced by $\G_{\mu}$. Indeed, in that case, the zone of damping is moving at speed $\mu$. But, we can use the infinite speed of propagation to get a similar result as follows.

If $\G_{\mu} \widetilde{u}_n\rightarrow 0$ in $L^2([0,T]\times \T)$, we infer similarly that $a(x-t\mu)\widetilde{u}_n\rightarrow 0$ in $L^2([0,T]\times \T)$. If for instance $[-c,c] \subset \omega$ with $c>0$, we easily conclude that there exists an $\e$ (a priori depending on $\mu$) such that $\widetilde{u}_n\rightarrow 0$ in $L^2([0,\e]\times [-\e,\e])$.  

This result will be enough for what follows since the time $T>0$ and the open set $\omega$ are arbitrary ($\omega$ non empty).
\end{remark}

Now, we borrow a propagation theorem from \cite{laurent}.
\begin{theorem}
\label{thmpropagation3}
Let $w_n$ be a sequence of solutions of
$$i\partial_t w_n + \partial_x^2 w_n =f_n$$
such that for one $0\leq b\leq 1$, we have
$$ \big\|w_n\big\|_{X^{0,b}_T} \leq C,~~\big\|w_n\big\|_{X^{-1+b,-b}_T} \rightarrow 0~~ and~~\big\|f_n\big\|_{X^{-1+b,-b}_T}\rightarrow  0 $$
Moreover, we assume that there is a non empty open set $\omega$ such that $w_n \rightarrow 0$ in $L^2([0,T],L^2(\omega))$.\\
Then $w_n \rightarrow 0$ in $L^2_{loc}([0,T],L^2(\T))$.
\end{theorem}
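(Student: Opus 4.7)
The plan is to prove Theorem \ref{thmpropagation3} by the positive commutator method of Dehman--G\'erard--Lebeau, in the adaptation to Schr\"odinger-type equations on compact manifolds developed in \cite{laurent}. The philosophy is that the strong $L^2$-convergence on $\omega$ is propagated across $\T$ by the infinite speed of propagation for Schr\"odinger, while the weak control of $w_n$ and $f_n$ in $X^{-1+b,-b}_T$ is exactly what is needed to make all ``lower-order" remainders vanish in the limit.

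Setup. After multiplication by a time cutoff $\chi \in C_c^\infty((0,T))$, and using that pointwise multiplication by smooth functions of $t$ alone is continuous on $X^{s,b}$, I may work with $v_n := \chi(t)w_n$, which satisfies $Lv_n = \chi f_n + i\chi' w_n \to 0$ in $X^{-1+b,-b}$ (where $L := i\partial_t+\partial_x^2$), $v_n \to 0$ in $X^{-1+b,-b}$, $v_n$ bounded in $X^{0,b}$, and $v_n \to 0$ in $L^2([0,T]\times \omega')$ for any $\omega' \Subset \omega$. The desired conclusion is then that $v_n \to 0$ in $L^2([0,T]\times \T)$ after an arbitrary such cutoff, which yields the $L^2_{loc}$ statement.

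Commutator identity. For a real-valued symbol $\psi \in C^\infty(\T)$, I would choose $A$ to be a self-adjoint zero-order pseudodifferential operator in $x$ with principal symbol $\psi(x)\,\mathrm{sgn}(\xi)$, e.g.\ $A = \tfrac12(\psi H + H\psi)$ where $H$ is the Hilbert transform. Pairing the equation $Lv_n$ with $Av_n$ and taking the real part yields, since $A$ commutes with $\partial_t$,
\begin{equation*}
\mathrm{Re}\int_0^T \langle Lv_n, Av_n\rangle_{L^2}\, dt \;=\; -\tfrac12 \int_0^T \langle i[\partial_x^2, A]v_n, v_n\rangle_{L^2}\, dt.
\end{equation*}
A direct symbol computation gives that $i[\partial_x^2, A]$ has principal symbol $-2\psi'(x)|\xi|$, so modulo a remainder $R$ of strictly lower order the right-hand side equals $\int_0^T \langle \psi'(x)|D_x| v_n, v_n\rangle\, dt + \langle R v_n, v_n\rangle$. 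The left-hand side is bounded by $\|Lv_n\|_{X^{-1+b,-b}_T}\,\|Av_n\|_{X^{1-b,b}_T}$, hence tends to $0$ by the hypothesis on $f_n$ together with the boundedness of $A$ from $X^{0,b}$ to $X^{-1,b}$, which must be verified by a stationary-phase-type $X^{s,b}$ estimate. The remainder $\langle R v_n, v_n\rangle$ is estimated by $\|v_n\|_{X^{-1+b,-b}}\|Rv_n\|_{X^{1-b,b}}$ and tends to $0$ thanks to the weak convergence hypothesis.

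Propagation step and iteration. Choosing $\psi$ so that $\psi' \geq 0$ and $\psi'$ is strictly positive on a neighborhood $\omega_1 \supsetneq \omega'$, the identity together with $v_n \to 0$ in $L^2([0,T]\times \omega')$ forces $\int_0^T \langle \psi'(x)|D_x| v_n, v_n\rangle\, dt \to 0$, from which one recovers $v_n \to 0$ in $L^2([0,T], H^{1/2}(\omega_1))$ and in particular in $L^2([0,T]\times \omega_1)$. Since $\T$ is connected and compact, finitely many such steps (or one single step using a symbol $\psi$ whose derivative is positive on $\T \setminus \omega'$ up to an additive constant) give the full conclusion. The main obstacle is the precise bookkeeping of orders in the symbolic calculus on $\T$: the gain of one derivative in the principal symbol of $[\partial_x^2, A]$ must be matched exactly with the one-derivative gap between the norm $X^{0,b}$ in which $v_n$ is bounded and the norm $X^{-1+b,-b}$ in which $v_n$ and $Lv_n$ are small, so every remainder term has to be shown to factor through an operator strictly smoothing enough to be absorbed by this duality pairing.
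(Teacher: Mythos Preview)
Your strategy---a positive commutator/propagation argument in the spirit of Dehman--G\'erard--Lebeau, as carried out in \cite{laurent}---is exactly the method behind this theorem (the paper simply quotes the result). But the choice of a \emph{zero}-order operator $A$ makes the scheme fail in two places. First, with $A$ of order $0$ the commutator $i[\partial_x^2,A]$ is of order $1$, so the principal term is $\int_0^T\!\langle \psi'(x)|D_x|v_n,v_n\rangle\,dt$, an $H^{1/2}$-type quantity; since $\int_{\T}\psi'=0$, $\psi'$ must be negative on part of $\omega'$, and because $|D_x|$ is nonlocal you cannot absorb that negative contribution using only $v_n\to 0$ in $L^2(\omega')$. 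Second, the duality you invoke for $\langle Lv_n,Av_n\rangle$ does not close: a zero-order pseudo in $x$ loses $|b|$ derivatives in the $s$-index on Schr\"odinger $X^{s,b}$ spaces (a frequency shift by $k$ moves $\tau+\xi^2$ by $\sim 2k\xi$), so from $v_n\in X^{0,b}$ you only get $Av_n\in X^{-b,b}$, never $X^{1-b,b}$; your stated mapping $A:X^{0,b}\to X^{-1,b}$ does not help either. (Incidentally, $A=\tfrac12(\psi H+H\psi)$ is skew-adjoint, not self-adjoint, since $H^*=-H$.)

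The repair is to lower the order of $A$ by one: take $A$ of order $-1$, for instance a symmetrization of $\psi(x)\,\partial_x^{-1}$. Then $i[\partial_x^2,A]$ has order-zero principal part equal to multiplication by $-2\psi'(x)$, so the main term is just $\int_0^T\!\int_{\T}\psi'|v_n|^2$. Choosing $\psi'=1-\phi$ with $\phi\ge 0$, $\mathrm{supp}\,\phi\subset\omega'$ and $\int_{\T}\phi=2\pi$, this equals $\|v_n\|_{L^2}^2-\int\phi|v_n|^2$ and the conclusion follows in one step. The bookkeeping now closes exactly: with the $|b|$-loss for multiplication, an order $-1$ operator maps $X^{0,b}\to X^{1-b,b}$, so $\langle Lv_n,Av_n\rangle$ is bounded by $\|Lv_n\|_{X^{-1+b,-b}}\|Av_n\|_{X^{1-b,b}}\to 0$, and the order $-1$ remainder $R$ in the commutator satisfies $|\langle Rv_n,v_n\rangle|\le \|Rv_n\|_{X^{1-b,b}}\|v_n\|_{X^{-1+b,-b}}\to 0$. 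This is precisely how the hypothesis ``gap'' of one derivative between $X^{0,b}$ and $X^{-1+b,-b}$ is matched by the one-order gain of the commutator.
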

We want to apply this lemma with $b=1/2$ and $$f_n=-\partial_x P_+\big[W_n(  P_- \partial_x u_n) \big]-\frac{i}{2}P_+\big[g_n e^{-\frac{i}{2}F_n}\big]-\frac{1}{4}P_+\big[\big(-\frac{1}{2}P_0 (u_n^2)+G_n\big) u_ne^{-\frac{i}{2}F_n}\big]$$
with $g_n=\G\G^{*}u_n$, $G_n=\partial_x^{-1}g_n$.
That is, we need to prove
$$ \big\|w_n\big\|_{X^{0,\frac{1}{2}}_T} \leq C,~~\big\|w_n\big\|_{X^{-\frac{1}{2},-\frac{1}{2}}_T} \rightarrow 0~~ and~~\big\|f_n\big\|_{X^{-\frac{1}{2},-\frac{1}{2}}_T}\rightarrow  0. $$
We decompose $f_n=f_n^a+f_n^b$ with $f_n^a=-\partial_x P_+\big[W_n(  P_- \partial_x u_n) \big]$.
We know that
\begin{itemize}
\item $\|w_n \|_{X^{0,1/2}}\leq C$,
\item $\|f_n \|_{X^{0,-1/2}}\leq C$.
\end{itemize}

\begin{lemme}For any $\theta\in[0,1]$, we have the bound
\bna
\|f_n \|_{X_T^{-2\theta,-\frac{1-\theta}{2}}}\leq C.
\ena
\end{lemme}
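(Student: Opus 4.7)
The claim has the form of a one-parameter family of bounds interpolating between the $\theta=0$ endpoint
\[
\|f_n\|_{X^{0,-1/2}_T}\leq C,
\]
which is already part of the running hypotheses on the right-hand side of the equation for $w_n$, and the $\theta=1$ endpoint
\[
\|f_n\|_{X^{-2,0}_T}=\|f_n\|_{L^2([0,T],H^{-2}(\T))}\leq C.
\]
The plan is therefore in two steps: (i) establish the $\theta=1$ endpoint by hand, term by term, in the decomposition $f_n=\NLI_n+\NLII_n+\NLIII_n$; (ii) deduce the full range $\theta\in(0,1)$ by complex interpolation of the weighted $L^2$ spaces on the Fourier side, since the weight of $X^{s,b}$ is $\langle\tau+|\xi|\xi\rangle^{2b}\langle\xi\rangle^{2s}$ and the pair $(s,b)=(-2\theta,-(1-\theta)/2)$ is exactly the convex combination of $(0,-1/2)$ and $(-2,0)$ with parameter $\theta$.

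For the $\theta=1$ endpoint I would estimate each piece using only the $L^\infty_T L^2$ boundedness of $u_n$ and of $w_n$ (which are already given) together with the trivial facts $\|e^{-\frac{i}{2}F_n}\|_{L^\infty}\leq 1$, $\|W_n\|_{L^\infty}\leq 1$, and $\|G_n\|_{L^\infty}\lesssim \|g_n\|_{L^2}$. For $\NLI_n=-\partial_x P_+[W_n(P_-\partial_x u_n)]$, I bound
\[
\|\NLI_n\|_{L^2_T H^{-2}}\lesssim \|W_n(P_-\partial_x u_n)\|_{L^2_T H^{-1}}\lesssim \|W_n\|_{L^\infty_{t,x}}\|u_n\|_{L^2_T L^2}\leq C,
\]
where the $H^{-1}$ step uses that $P_-\partial_x$ is of order $1$ and multiplication by an $L^\infty$ function of size $1$ is bounded on $H^{-1}$ (via duality with $H^1$ and integration by parts). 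For $\NLII_n=-\tfrac{i}{2}P_+[g_n e^{-\frac{i}{2}F_n}]$ with $g_n=\G\G^{*}u_n$ bounded in $L^2_T L^2$, I simply use $\|\NLII_n\|_{L^2_T L^2}\leq C$, which is stronger than the $L^2_T H^{-2}$ estimate needed. For $\NLIII_n$, I use the pointwise bound $|-\tfrac12 P_0(u_n^2)+G_n|\lesssim \|u_n\|_{L^2}^2+\|g_n\|_{L^2}$, combined with $u_n\in L^\infty_T L^2$ and the boundedness of $P_+$ on $L^2$, to control $\NLIII_n$ in $L^2_T L^2\hookrightarrow L^2_T H^{-2}$.

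For the interpolation step I regard $X^{s,b}_T$ as the restriction to $[0,T)$ of the weighted space $L^2(\Z\times\R;\langle\tau+|\xi|\xi\rangle^{2b}\langle\xi\rangle^{2s}d\xi d\tau)$. Taking an optimal extension $\tilde f_n$ of $f_n$ realising (up to a factor $2$) both norms $\|f_n\|_{X^{0,-1/2}_T}$ and $\|f_n\|_{X^{-2,0}_T}$ (one can, e.g., extend by $0$ outside $[0,T]$ since the endpoint norms involve only negative $b$), Stein's complex interpolation theorem for weighted $L^2$ spaces—or equivalently the elementary convexity inequality $|w|^{2\theta}_1|w|^{2(1-\theta)}_0\leq (1-\theta)|w|^2_0+\theta|w|^2_1$ applied pointwise on the Fourier side with $|w|_j=\langle\tau+|\xi|\xi\rangle^{2b_j}\langle\xi\rangle^{2s_j}$—gives
\[
\|\tilde f_n\|_{X^{-2\theta,-(1-\theta)/2}}\lesssim \|\tilde f_n\|_{X^{0,-1/2}}^{1-\theta}\|\tilde f_n\|_{X^{-2,0}}^{\theta}\leq C.
\]

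The main obstacle is really the verification of the $\theta=1$ endpoint for $\NLI_n$, because the extra $\partial_x$ outside and the $\partial_x u_n$ inside conspire to eat two derivatives; however, the two derivatives are exactly what the $H^{-2}$ target provides, so no bilinear $X^{s,b}$ machinery is needed, only the $L^\infty$ bound on $W_n$. Once the two endpoints are in hand, the interpolation is mechanical and delivers the claimed bound uniformly in $\theta\in[0,1]$.
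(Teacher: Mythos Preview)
Your overall strategy coincides with the paper's: establish the two endpoints $\theta=0$ (already known) and $\theta=1$ (i.e.\ $f_n$ bounded in $L^2_TH^{-2}$), then interpolate. The interpolation step and the treatment of $\NLII_n,\NLIII_n$ are fine and match the paper.

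There is, however, a genuine gap in your bound for $\NLI_n$. First, the assertion $\|W_n\|_{L^\infty}\le 1$ is not justified: $W_n=P_+(e^{-\frac{i}{2}F_n})$ and $P_+$ is \emph{not} bounded on $L^\infty$. More seriously, the inequality
\[
\|W_n(P_-\partial_x u_n)\|_{L^2_TH^{-1}}\lesssim \|W_n\|_{L^\infty_{t,x}}\|u_n\|_{L^2_TL^2}
\]
is false as stated, because multiplication by a merely $L^\infty$ function is \emph{not} bounded on $H^{-1}(\T)$: by duality this would require $L^\infty$ multipliers to act boundedly on $H^1$, which fails (take e.g.\ a step function). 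Your parenthetical ``via duality with $H^1$ and integration by parts'' cannot repair this using only $L^\infty$ information on $W_n$.

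The fix is exactly what the paper does: use instead that $W_n\in L^\infty_TH^1$ (since $\partial_x W_n=w_n$ is bounded in $L^\infty_TL^2$). Because $H^1(\T)$ is an algebra, one has by duality $H^1\cdot H^{-1}\hookrightarrow H^{-1}$, and therefore
\[
\|W_n(P_-\partial_x u_n)\|_{L^2_TH^{-1}}\lesssim \|W_n\|_{L^\infty_TH^1}\|\partial_x u_n\|_{L^2_TH^{-1}}\lesssim \|w_n\|_{L^\infty_TL^2}\|u_n\|_{L^2_TL^2}\le C.
\]
With this correction your argument is complete and essentially identical to the paper's.
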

\bnp
If  we prove $\|f_n \|_{X_T^{-2,0}}= \|f_n \|_{L^2([0,T],H^{-2})}\leq C$, by interpolation, this gives
\bna
\|f_n \|_{X_T^{-2\theta,-\frac{1-\theta}{2}}}\le C.
\ena
So, to bound $f_n^a$, we need to prove that $W_n(  P_- \partial_x u_n) $ is bounded in $L^2([0,T],H^{-1})$. We write
\begin{equation*}
\begin{split}
\|W_n(  P_- \partial_x u_n) \|_{L^2([0,T],H^{-1})}&\leq C \|W_n \|_{L^2([0,T],H^{1})}\|\partial_x u_n \|_{L^{\infty}([0,T],H^{-1})}\\
&\leq C \|w_n \|_{L^2([0,T],L^2)}\|u_n \|_{L^{\infty}([0,T],L^2)}\leq C
\end{split}
\end{equation*}
where we have used that $H^1$ is an algebra and so by duality, $H^1 *H^{-1}\hookrightarrow H^{-1}$.

The bound for $f_n^b$ is easier (and corresponds actually to the estimates of $II$ and $III$ in section \ref{subsectwestimgauge})
\bna
\|f_n^b \|_{L^2([0,T],H^{-2})}&\leq &\|f_n^b \|_{L^2([0,T],L^2)} \leq \|g_n \|_{L^2([0,T],L^2)}+\|-\frac{1}{2}P_0 (u_n^2)+G_n \|_{L^\infty([0,T]\times \T)}\|u_n \|_{L^2([0,T],L^2)}\\
&\leq &\|g_n \|_{L^2([0,T],L^2)}+\big(\|u_n \|_{L^{\infty}([0,T],L^2)}^2+\|g_n \|_{L^2([0,T],L^2)}\big)\|u_n \|_{L^2([0,T],L^2)}.
\ena
\enp
With for instance $\theta=\frac{1}{5}$,
\bna
\|f_n \|_{X_T^{-\frac{2}{5},-\frac{4}{10}}}\leq C
\ena
So, since $-\frac{1}{2}<-\frac{2}{5}$ and  $-\frac{1}{2}<-\frac{4}{10}$, the embedding of $X_T^{-\frac{2}{5},-\frac{4}{10}} \hookrightarrow X_T^{-\frac{1}{2},-\frac{1}{2}}$ is compact 
and we can extract a subsequence and pick a  function $f$ such that
\bna
\|f_n-f \|_{X_T^{-\frac{1}{2},-\frac{1}{2}}}\rightarrow 0.
\ena
We can get that $f=0$ using the equation verified by $w_n$, $f_n$ and use the fact that we already know that $w_n\rightharpoonup 0$ in the distributional sense.
\bna
\|f_n \|_{X_T^{-\frac{1}{2},-\frac{1}{2}}}\rightarrow 0.
\ena
This constitutes the case $b=\frac{1}{2}$. The same holds for $w_n$.

\medskip

We can now state the following theorem.

\begin{theorem}
\label{lemmacvcestrong}
Under the same assumptions as in Lemma \ref{lemmacvceweak}, we have $u_n \rightarrow 0$ in $L^2_{loc}([0,T],L^2(\T))$
\end{theorem}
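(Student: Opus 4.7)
The plan is to transfer the strong $L^2_{\mathrm{loc}}$ convergence $w_n\to 0$ just established by Theorem~\ref{thmpropagation3} back to the original sequence $u_n$, using the inversion identity \eqref{ABgauge} for the gauge transform, together with the strong convergence $F_n\to 0$ in $L^\infty([0,T]\times\T)$ already obtained in the proof of Lemma~\ref{lmweakwn} via Aubin--Lions.

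Specifically, applying \eqref{ABgauge} to $u_n$ gives
\begin{equation*}
P_+u_n \,=\, 2iw_n \,+\, A_n \,+\, B_n,
\end{equation*}
with $A_n=2iP_+\bigl[w_n(e^{iF_n/2}-1)\bigr]$ and $B_n=P_+\bigl[P_+(e^{iF_n/2}-1)\,P_{\leq 0}(u_ne^{-iF_n/2})\bigr]$. Since $e^{iF_n/2}-1\to 0$ strongly in $L^\infty([0,T]\times\T)$ and $w_n$ is bounded in $L^2_{t,x}$, the bound $A_n\to 0$ in $L^2_{t,x}$ is immediate.

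The main obstacle is the term $B_n$, because the projector $P_+$ is not bounded on $L^\infty$, while the factor $P_{\leq 0}(u_ne^{-iF_n/2})$ is only controlled in $L^2$. I would upgrade the regularity of $e^{iF_n/2}-1$ as follows: it is uniformly bounded in $L^\infty([0,T],H^1(\T))$ (since $\partial_x(e^{iF_n/2}-1)=\frac{i}{2}u_ne^{iF_n/2}$ is bounded in $L^\infty([0,T],L^2)$) and tends to zero in $L^\infty([0,T],L^2(\T))$ (by the $L^\infty_{t,x}$ convergence). Interpolation between these two bounds yields $e^{iF_n/2}-1\to 0$ strongly in $L^\infty([0,T],H^{1-\varepsilon})$ for every $\varepsilon>0$; since $P_+$ is continuous on $H^{1-\varepsilon}$ and the one-dimensional Sobolev embedding $H^{1-\varepsilon}(\T)\hookrightarrow L^\infty(\T)$ holds for $\varepsilon<1/2$, we obtain $P_+(e^{iF_n/2}-1)\to 0$ strongly in $L^\infty([0,T]\times\T)$. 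Combined with the uniform $L^2$ bound on $u_ne^{-iF_n/2}$, this gives $B_n\to 0$ in $L^2_{t,x}$.

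Putting the three contributions together yields $P_+u_n\to 0$ in $L^2_{\mathrm{loc}}([0,T],L^2(\T))$. To finish, recall that $u_n$ is real valued and has zero spatial mean at every time (the mean is preserved under the damped flow, as $\int_\T \mathcal{G}\mathcal{G}^*u_n\,dx=0$ by the very definition of $\mathcal{G}$, so that $[u_n(t)]=[u_{0,n}]=0$). Consequently $u_n=2\,\mathrm{Re}(P_+u_n)$, and the claimed strong convergence $u_n\to 0$ in $L^2_{\mathrm{loc}}([0,T],L^2(\T))$ follows.
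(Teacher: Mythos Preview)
Your argument is correct, but it takes a longer route than the paper. The paper bypasses the decomposition \eqref{ABgauge} entirely and uses instead the one-line identity coming straight from the definition $w_n=-\tfrac{i}{2}P_+(u_ne^{-iF_n/2})$:
\[
P_+u_n \;=\; 2iw_n \;-\; P_+\bigl[u_n\bigl(e^{-iF_n/2}-1\bigr)\bigr].
\]
Here the second term involves $P_+$ acting on an $L^2$ function (since $u_n$ is bounded in $L^2_{t,x}$ and $e^{-iF_n/2}-1\to 0$ in $L^\infty_{t,x}$), so the $L^2$-boundedness of $P_+$ suffices immediately. This sidesteps the whole issue of controlling $P_+$ on $L^\infty$ that forced you into the interpolation-and-Sobolev detour for $B_n$. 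Your treatment of $B_n$ is fine---the $H^{1-\varepsilon}$ interpolation works---but it is extra machinery that the simpler identity makes unnecessary. Both proofs rely on the same two ingredients ($w_n\to 0$ from Theorem~\ref{thmpropagation3} and $e^{\pm iF_n/2}\to 1$ in $L^\infty$ from Aubin--Lions); the paper just assembles them more economically.
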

\bnp
Using Theorem \ref{thmpropagation3} and the previous analysis, we get $w_n \rightarrow 0$ in $L^2_{loc}([0,T],L^2(\T))$. By definition of $w_n$, we have 
$w_n=-\frac{i}{2}P_+(u_ne^{-\frac{i}{2}F_n})$. But, we have proved in Lemma \ref{lmweakwn} that $e^{-\frac{i}{2}F_n}$ converges strongly to $1$ in $L^{\infty}([0,T],L^{\infty})$. 
So, we can write
\bna
P_+u_n=2iw_n-P_+\big[u_n(e^{-\frac{i}{2}F_n}-1)\big].
\ena
This gives $P_+u_n \rightarrow 0$ in $L^2_{loc}([0,T],L^2(\T))$ and the same result for $u_n$ since $u_n$ is real valued.
\enp
\subsection{Small data} 
\begin{lemme}\label{lemmaapproxlindamped}
For  $u_{0}\in L^2(\mathbb T)$, let $u$ denote  the solution of  
\begin{equation*}
\begin{cases}
\partial_t u+\H \partial_x^2 u =u\partial_x u-\G\G^{*} u,\\
u(0)=u_{0},
\end{cases}
\end{equation*}
and let  $u_{L}$ denote the solution of
\begin{equation}\label{lineq20}
\begin{cases}
\partial_t u_{L}+\H \partial_x^2 u_{L}= -\G\G^{*} u_{L}\\
u_{L}(0)=u_{0}.
\end{cases}
\end{equation}
Then, there exist some constants $T_0>0$ , $\epsilon>0$ and $C>0$ such that for $T<T_0$ and $\|u_{0} \|_{L^2}\leq \e$, we have
\begin{equation*}
\|u-u_L \|_{L^{\infty}([0,T],L^2)}\leq C\, \|u_{0} \|_{L^2}^2
\end{equation*}
\end{lemme}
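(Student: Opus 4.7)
The plan is to relate $u_L$ (solution of the damped linear BO) to the pure linear BO solution with a source term $g=-\G\G^* u$, and then invoke the first-order estimate \eqref{estimlinaru} that was already established in the small-data section.

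The first step will be to record the energy identity $\frac{1}{2}\frac{d}{dt}\|u\|_{L^2}^2=-\|\G^* u\|_{L^2}^2$, which is valid because $\int u\cdot u\partial_x u\,dx=\frac{1}{3}\int \partial_x(u^3)\,dx=0$ and $\H\partial_x^2$ is skew-adjoint on real-valued functions. In particular $\|u(t)\|_{L^2}\le \|u_0\|_{L^2}$ for $t\ge 0$, hence
\[
\|g\|_{L^2([0,T]\times\T)}=\|\G\G^* u\|_{L^2([0,T]\times\T)}\le CT^{1/2}\|u_0\|_{L^2}.
\]
Choosing $\e$ and $T_0$ small enough makes both $\|u_0\|_{L^2}$ and $\|g\|_{L^2}$ small, placing us inside the regime where the small-data estimates of Section~4 apply.

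Next I would introduce an auxiliary function $v_L$ solving the \emph{pure} linear BO problem driven by the same $g$:
\[
\partial_t v_L+\H\partial_x^2 v_L=g,\qquad v_L(0)=u_0.
\]
Applying \eqref{estimlinaru} directly to the pair $(u,v_L)$ gives
\[
\|u-v_L\|_{L^\infty([0,T],L^2)}\lesssim \|u_0\|_{L^2}^2+\|g\|_{L^2([0,T]\times\T)}^2\lesssim \|u_0\|_{L^2}^2+T\|u_0\|_{L^2}^2\lesssim \|u_0\|_{L^2}^2.
\]

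Finally I would compare $v_L$ with $u_L$. Their difference solves the linear BO equation with source $-\G\G^*(u-u_L)$ and zero initial data, so a routine $L^2$ energy estimate combined with the $L^2$-boundedness of $\G\G^*$ yields
\[
\|v_L-u_L\|_{L^\infty([0,T],L^2)}\le \int_0^T \|\G\G^*(u-u_L)(s)\|_{L^2}\,ds\le CT\,\|u-u_L\|_{L^\infty([0,T],L^2)}.
\]
Triangle inequality then gives $\|u-u_L\|_{L^\infty_tL^2_x}\le C\|u_0\|_{L^2}^2+CT\|u-u_L\|_{L^\infty_tL^2_x}$, and shrinking $T_0$ so that $CT_0\le 1/2$ allows absorption of the last term into the left-hand side, producing the claimed bound. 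No genuine obstacle arises; the only point to watch is the compatibility of the smallness thresholds needed for \eqref{estimlinaru} with the bootstrap bound on $u$, which is straightforward once $\e$ and $T_0$ are chosen small.
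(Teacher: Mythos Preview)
Your argument is correct and is essentially identical to the paper's proof: both introduce the auxiliary solution of the free linear BO equation with source $g=-\G\G^{*}u$, apply \eqref{estimlinaru} to control $u-v_L$, then use a semigroup/energy estimate to bound $v_L-u_L$ by $CT\|u-u_L\|_{L^\infty_tL^2_x}$ and absorb for $T$ small. The only cosmetic difference is that the paper calls your $v_L$ simply $v$ and writes the first bound as $C_1(T+1)\|u_0\|_{L^2}^2$.
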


\begin{proof}

Let $v$ denote the solution of 
\begin{equation}\label{lineqv}
\begin{cases}
\partial_t v+\H \partial_x^2 v= -\G\G^{*} u\\
v(0)=u_{0}.
\end{cases}
\end{equation}

From \eqref{estimlinaru}  applied with $g=-\G\G^{*}u$ and $\|u_0\|_{L^2}$ small enough, we have 
\begin{equation}\label{lineqv1}
\begin{split}
\|u-v\|_{L^{\infty}([0,T],L^2)}&\lesssim\|\G\G^{*}u\|_{L^2([0,T],L^2)}^2+\|u_0\|^2_{L^2} \\
&\lesssim \|u\|^2_{L^2([0,T],L^2)}+\|u_0\|^2_{L^2}\\
&\lesssim C_1(T+1)\,\|u_0\|^2_{L^2}
\end{split}
\end{equation}
for some constant $C_1>0$ where we used the fact that $\|u(t)\|_{L^2}\le \|u_0\|_{L^2}$ for any $t\ge 0$. 

On the other hand,  by classical semigroup estimates we have that 
\begin{equation}\label{lineqv2}
\begin{split}
\|v-u_L\|_{L^{\infty}([0,T],L^2)}&\lesssim \|\G\G^{*}(u-u_L)\|_{L^1([0,T],L^2)}\\
&\lesssim \|u-u_L\|_{L^1([0,T],L^2)}\\
&\le C_2\,T\,\|u-u_L\|_{L^{\infty}([0,T],L^2)}.
\end{split}
\end{equation}

Combining \eqref{lineqv1} and \eqref{lineqv2} and the triangle inequality yield
\begin{equation*}
\|u-u_L\|_{L^{\infty}([0,T],L^2)}\le C_1\,\|u_0\|^2_{L^2}+ C_2\,T\,\|u-u_L\|_{L^{\infty}([0,T],L^2)}.
\end{equation*}

Thus 
\begin{equation*}
\|u-u_L\|_{L^{\infty}([0,T],L^2)}\le\frac{C_1\, T}{1-C_2\,T}\,\|u_0\|^2_{L^2}
\end{equation*}
whenever $\|u_0\|_{L^2}<\epsilon$ and $T<T_0=\frac{1}{2C_2}$.

\end{proof}

\begin{prop}
\label{lemmacontrad0}
Suppose $u_{n,0}$ is a sequence of smooth functions strongly convergent to $0$ in $L^2$, with associated solutions $u_n$ of the problem
\begin{equation*}
\begin{cases}
\partial_t u_n+\H \partial_x^2 u_n=u_n\partial_x u_n-\G\G^{*} u_n\\
u_n(0)=u_{0,n}, 
\end{cases}
\end{equation*}
and in addition assume that
\bnan
\label{hypcontrad}
\|\G u_n \|_{L^2([0,T]\times \T)}\leq\frac{1}{n}\|u_{0,n} \|_{L^2}.
\enan

Then, $u_{0,n}=0$ for $n$ large enough.
\end{prop}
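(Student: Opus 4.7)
The plan is a normalization/contradiction argument combined with the linear observability inequality for the damped equation, together with the quadratic-in-data comparison between the nonlinear and the linear damped flows already at hand in Lemma \ref{lemmaapproxlindamped}.

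Suppose, arguing by contradiction, that $\alpha_n := \|u_{0,n}\|_{L^2} > 0$ along a subsequence. Set $v_{0,n} := u_{0,n}/\alpha_n$ and $v_n := u_n/\alpha_n$, so that $\|v_{0,n}\|_{L^2} = 1$ and $v_n$ solves
\begin{equation*}
\partial_t v_n + \H \partial_x^2 v_n = \alpha_n v_n\partial_x v_n - \G\G^{*} v_n,\qquad v_n(0) = v_{0,n},
\end{equation*}
with the normalized observation bound $\|\G v_n\|_{L^2([0,T]\times\T)} \le 1/n$. Let $v_{L,n}$ be the solution of the linear damped equation \eqref{lineq20} with initial datum $v_{0,n}$. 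The point is that Lemma \ref{lemmaapproxlindamped}, applied to $u_n$ (whose data is small for $n$ large since $\alpha_n\to 0$) and then rescaled by $\alpha_n$, yields, for any fixed $T<T_0$,
\begin{equation*}
\|v_n - v_{L,n}\|_{L^{\infty}([0,T],L^2)} \le C\alpha_n \tend{n}{\infty} 0.
\end{equation*}

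Next I would invoke the observability inequality for the linear damped equation: there exists $C_T > 0$ such that for every $v_0 \in L^2(\T)$ with $[v_0]=0$, the solution $v_L$ of \eqref{lineq20} satisfies
\begin{equation*}
\|v_0\|_{L^2}^2 \le C_T \int_0^T \|\G v_L(t)\|_{L^2}^2\, dt.
\end{equation*}
This is a standard consequence of the energy identity $\|v_L(T)\|^2 = \|v_0\|^2 - 2\int_0^T\|\G v_L\|^2$ combined with the exponential decay of Theorem B (equivalently, it follows from the controllability/observability results of Linares-Ortega \cite{LO}), taking $T$ sufficiently large so that the decay constant beats $1$. Applying this to $v_{L,n}$ and splitting $\G v_{L,n} = \G v_n + \G(v_{L,n}-v_n)$ via triangle inequality, together with the continuity $\|\G w\|_{L^2([0,T]\times\T)} \lesssim T^{1/2}\|w\|_{L^\infty([0,T],L^2)}$, we obtain
\begin{equation*}
1 = \|v_{0,n}\|_{L^2}^2 \le 2C_T\Bigl(\|\G v_n\|_{L^2([0,T]\times\T)}^2 + CT\|v_n - v_{L,n}\|_{L^{\infty}([0,T],L^2)}^2\Bigr) \le 2C_T\Bigl(\tfrac{1}{n^2} + CT\,\alpha_n^2\Bigr),
\end{equation*}
which tends to zero and contradicts $1=1$. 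Thus $u_{0,n} = 0$ for all $n$ large enough.

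The main obstacle is the presence of the nonlinear term $u_n\partial_x u_n$, which prevents a direct use of the linear observability on the sequence $u_n$ itself. This is precisely what Lemma \ref{lemmaapproxlindamped} fixes: it shows that the nonlinear flow differs from the linear damped flow by a term of order $\|u_{0,n}\|_{L^2}^2 = \alpha_n^2$, and after rescaling by $\alpha_n$ this difference still goes to zero, allowing the linear observability to be transferred to the normalized sequence. One should also check that the hypothesis $\alpha_n \to 0$ (which is needed to apply Lemma \ref{lemmaapproxlindamped}) is automatic, since by assumption $u_{0,n}\to 0$ strongly in $L^2$; and that one may fix $T>0$ as large as needed (the statement only requires the existence of \emph{some} $T$ for the observability, and Theorem B provides the necessary decay for $T$ large).
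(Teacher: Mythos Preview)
Your argument is essentially the same as the paper's: both combine the linear damped observability inequality from \cite{LO} with Lemma~\ref{lemmaapproxlindamped} via a triangle inequality on $\G u_{n,L}$. The paper works directly with $u_n$ and arrives at $\|u_{0,n}\|^2 \le C\big(\tfrac{1}{n}+\|u_{0,n}\|^2\big)\|u_{0,n}\|^2$; your normalization $v_n=u_n/\alpha_n$ amounts to dividing that inequality by $\alpha_n^2$.

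There is one inconsistency to clean up. Lemma~\ref{lemmaapproxlindamped} forces $T<T_0$, yet in your last paragraph you write that ``one may fix $T>0$ as large as needed,'' justifying observability via Theorem~B by taking $T$ large enough for the decay factor to beat~$1$. These two constraints are incompatible as stated. The fix is already implicit in your parenthetical reference to \cite{LO}: the linear observability \eqref{observdampedlin} in fact holds for \emph{every} $T>0$ (this is dual to Theorem~A, controllability in arbitrary time), so you can simply choose any $T<T_0$ from the start and drop the remark about taking $T$ large. This is exactly what the paper does (``for some $T$ that can be chosen $T<T_0$'').
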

\bnp
We denote $u_{n,L}$ the solution of
\begin{equation*}
\begin{cases}
\partial_t u_{n,L}+\H \partial_x^2 u_{n,L}=-\G\G^{*} u_{n,L}\\
u_{n,L}(0)=u_{0,n}.
\end{cases}
\end{equation*}
By the observability of the linear damped system, proved in \cite{LO} Theorem 1.2, we know that for some $T$ that can be chosen $T<T_0$, we have
\bnan
\label{observdampedlin}
\|u_{0,n}\|_{L^2}^2\leq C\int_0^T \|\G u_{n,L}\|_{L^2}^2.
\enan
But, we have from triangular inequality, (\ref{hypcontrad}) and Lemma \ref{lemmaapproxlindamped}, that
\bna
\|\G u_{n,L}\|_{L^2([0,T]\times \T)}^2&\leq &C  \|\G u_{n}\|_{L^2([0,T]\times \T)}^2+C \|\G(u_{n,L}-u_n)\|_{L^2([0,T]\times \T)}^2\\
&\leq &\frac{C}{n}\|u_{0,n}\|_{L^2}^2+C\|u_{0,n}\|_{L^2}^4
\ena
Combining the previous estimate with (\ref{observdampedlin}) gives
\bna
\|u_{0,n}\|_{L^2}^2\leq C\big(\frac{1}{n}+\|u_{0,n}\|_{L^2}^2\big)\|u_{0,n}\|_{L^2}^2
\ena
Since $\|u_{0,n} \|_{L^2}$ converges to zero, this gives $u_{0,n}=0$ for $n$ large enough.
\enp

\section{Stabilization}

In this section we will prove Theorem \ref{stabilization}.  We will use the argument in \cite{LinaresRosierBO}. However to complete
the arguments we have to make some modifications as we  did in the previous section.

Because of the identity
\begin{equation}\label{energynew}
\frac12\|u(t)\|^2+\|\G u\|^2_{L^2([0,T]\times\T)}=\frac12\|u_0\|^2
\end{equation}
 we observe that $\|u(t)\|$ is nonincreasing, so that the exponential decay is guaranteed
if 
$$
\|u((n+1)T)\|\le \kappa\|u(nT)\|\text{\hskip10pt  for some \hskip10pt} \kappa<1.
$$

To prove the theorem, it is sufficient to show the following observability
inequality:  For any $T>0$ and any $R\gg1$ there exists a constant $C(R,T)$ such that for any $u_0\in H^0_0(\T)$ with $\|u_0\|\le R$
it holds
\begin{equation}\label{observanew}
\|u_0\|^2\le C\,\int_0^T\|\G u(t) \|^2\,dt
\end{equation}
where  $u$ denotes the solution of \eqref{stabilization1}. 

Suppose there exists a sequence $u_{0,n}\in H^0_0(\T)$ such that for each $n$ we have $ \|u_{0,n}\|\le R$ (where $R\gg1$) but
\begin{equation}
\label{contrad}
\|u_{0,n}\|>n\int_0^T\|\G u_n(t)\|^2\,dt.
\end{equation}
Then from Theorem \ref{lemmacvcestrong}, we have for a subsequence (still denoted by $u_n$) that
\begin{equation*}
u_n\to 0 \text{\hskip10pt in \hskip10pt} L^2_{\rm loc}([0,T], L^2(\T)).
\end{equation*}
Thus one can select some $t_0\in (0,T)$ such that, extracting again a subsequence,
\begin{equation*}
u_n(t_0)\to 0 \text{\hskip10pt in \hskip10pt} L^2(\T).
\end{equation*}

Since 
\begin{equation*}
\frac12\|u_n(t_0)\|^2+\|\G u_n\|^2_{L^2([0,t_0]\times\T)}=\frac12\|u_{0,n}\|^2,
\end{equation*}
we conclude that 
\begin{equation*}
\|u_{0,n}\|_{L^2}\to 0.
\end{equation*}
The assumptions of Proposition \ref{lemmacontrad0} are fulfilled. We finally get $u_{0,n}=0$ for $n$ large enough, which is a contradiction to \eqref{contrad}.


\section*{Acknowledgments} 
CL was partially supported by the Agence Nationale de la Recherche (ANR), Projet Blanc EMAQS number ANR-2011-BS01-017-01. CL would also like to thank IMPA for its hospitality. FL was partially supported by FAPERJ and CNPq Brazil.  LR was partially supported by the Agence Nationale de la Recherche (ANR), Project CISIFS,  grant ANR-09-BLAN-0213-02. 

\appendix

\section{\hskip10pt}

In the following,  we will establish a slight modification of the bilinear estimate of Molinet-Pilod \cite{MolinetPilodBO}, Proposition 3.5, with a gain of a power of $T$. 
Such an estimate will  allow to avoid the dilation argument for small time and large data and it could also be interesting for other purposes. This type of gain was already
obtained by Bourgain in  \cite{BourgainKdV} for the  KdV equation  and it relies mainly on the fact that the equation we consider is subcritical on $L^2$ with respect to the 
scaling.
\begin{lemme}
We have for $0\leq T\leq 1$ the estimate
\bna
\|\partial_x P_+\big[W(  P_- \partial_x u) \big] \|_{X^{0,-1/2}_T}\leq CT^{1/8} \|(u,w) \|_{X}^2.
\ena
\end{lemme}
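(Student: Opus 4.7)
My plan is to revisit the proof of Proposition 3.5 of Molinet-Pilod \cite{MolinetPilodBO}, which establishes the same bilinear estimate without the $T^{1/8}$ factor, and to insert a small $T$-gain at the closing step of each frequency interaction. The underlying principle, going back to Bourgain's treatment of KdV in \cite{BourgainKdV}, is that the $L^2$ theory of a subcritical dispersive equation leaves a small amount of slack in its bilinear estimates, and this slack can be converted into a power of $T$.

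Concretely, after the standard dyadic frequency decomposition $W = \sum_{N_1} W_{N_1}$ and $u = \sum_{N_2} u_{N_2}$, I would follow Molinet-Pilod's case analysis (low-high, high-low, high-high resonant and non-resonant interactions). In each case, they reduce the control of $\partial_x P_+[W_{N_1}(P_-\partial_x u_{N_2})]$ in the dual space $X^{0,-1/2}_T$ to a product of two $L^4_{x,t}$ norms, using the Bourgain embedding $\|v\|_{L^4_{x,t}} \lesssim \|v\|_{X^{0,3/8}}$ from Lemma \ref{strichartz}. Since in our setting the inputs are bounded in $X^{0,1/2}$ by the $X$-norm of $(u,w)$, the time-restricted refinement $\|v\|_{L^4_{x,T}} \leq T^{b-3/8}\|v\|_{X^{0,b}_T}$ of the same Strichartz estimate, applied with $b=1/2$ on one of the two $L^4$ placements, would produce exactly the factor $T^{1/8}$ claimed in the statement. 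For subcases that are not closed by a direct $L^4$--$L^4$ pairing (for instance resonant or nearly-resonant interactions requiring refined bilinear Strichartz arguments, or those controlled via duality with $\widetilde{Z}^{0,0}$), I would produce a small gain $T^{\epsilon}$ instead by invoking Lemma \ref{le1} to trade a fraction of the $X^{0,b}$-regularity (with $b$ slightly less than $1/2$) on one input against a power of $T$, then interpolate the result with the original unweighted bound to recover a uniform $T^{1/8}$.

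The hard part will be to verify, case by case, that inserting the extra $T$-factor does not destroy the summation over the dyadic scales $N_1, N_2$. Concretely, one must check that Molinet-Pilod's estimates have enough room in the powers of $N_1, N_2$ and of the resonance weight $\langle \sigma \rangle$ to absorb the replacement of $X^{0,3/8}$ by $X^{0,1/2}$ (or of $X^{0,1/2}$ by $X^{0,1/2-\eta}$) on the relevant factor. I expect this verification to be essentially mechanical, as announced by the remark preceding the lemma (``we sketch the modification''); the effort is therefore mostly bookkeeping of the frequency geometry rather than a new idea, and the fact that $L^2$ is strictly subcritical for BO is exactly what guarantees the needed margin.
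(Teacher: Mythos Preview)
Your proposal is correct and matches the paper's approach: both follow the Molinet--Pilod case analysis and extract the $T^{1/8}$ from the gap between the $X^{0,3/8}$ Strichartz exponent and the available $X^{0,1/2}$ regularity, via the time-localized estimate of Lemma~\ref{strichartz} (equivalently Lemma~\ref{le1}).

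One small refinement worth noting: in the paper's treatment of the pieces $I_B$ and $I_C$ (in the duality pairing against a test function $h\in L^2$), the $T^{1/8}$ is not placed on the input factors $w$ or $u$ but on the \emph{dual} side, i.e.\ on the $L^4$ norm of the rescaled test function $\big(\widehat{h}/\langle\sigma\rangle^{1/2}\big)^{\vee}$, whose $X^{0,1/2}$ norm is exactly $\|h\|_{L^2}$. This avoids any issue with the $\|u\|_{L^4}$ and $\|u\|_{X^{-1,1}}$ factors, which are already part of the $X$-norm and carry no extra $X^{0,b}$ room. Your fallback via Lemma~\ref{le1} and interpolation is therefore unnecessary here: every case closes directly once you allow the gain to sit on the test function.
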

\bnp
Let $h$, $u$, $w$ be some extension (same notation by abuse).
We use the same notation as in \cite{MolinetPilodBO} and continue the computations started in \cite{MolinetPilodBO}. We continue the estimates of $I_A$ starting from estimate (3.32) in \cite{MolinetPilodBO}.
\bna
|I_A|\leq \|h \|_{L^2}\|w \|_{X^{0,3/8}_T}\|u \|_{L^4}\leq CT^{1/2-3/8}\|h \|_{L^2}\|w \|_{X^{0,1/2}_T}\|u \|_{L^4},
\ena
For $I_B$, we use (3.33) and (3.34) of \cite{MolinetPilodBO} to get
\begin{equation*}
\begin{split}
|I_B| &\leq \big(\sum_{N_1}\|P_{N_1}\big(\frac{\widehat{h}}{\big\langle \sigma \big\rangle^{1/2}}\big)^{\vee} \|_{L^4}^2\big)^{1/2} \|w\|_{X^{0,1/2}_T}\|u \|_{L^4}\\
&\leq CT^{1/2-3/8}\|\big(\frac{\widehat{h}}{\big\langle \sigma \big\rangle^{1/2}}\big)^{\vee}\|{~X^{0,1/2}_T}\|h \|_{L^2}\|w \|_{X^{0,3/8}_T}\|u \|_{L^4}\\
&\leq CT^{1/2-3/8}\|h \|_{L^2}\|w \|_{X^{0,3/8}_T}\|u \|_{L^4},
\end{split}
\end{equation*}
and using (3.38)
\bna
|I_C|\leq \big(\sum_{N_1}\|P_{N_1}\big(\frac{\widehat{h}}{\big\langle \sigma \big\rangle^{1/2}}\big)^{\vee} \|_{L^4}^2\big)^{1/2}\|w \|_{X^{0,3/8}_T}\|u \|_{X^{-1,1}_T}
\leq CT^{1/2-3/8}\|h \|_{L^2}\|w \|_{X^{0,1/2}_T}\|u \|_{X^{-1,1}_T}.
\ena
\enp
\begin{lemme}
Let $\e>0$. The following estimate holds uniformly for $0\leq T\leq 1$:
\bna
\|\partial_x P_+\big[W(  P_- \partial_x u) \big] \|_{\widetilde{Z}^{0,-1}_T}\leq CT^{1/8-\e} \|(u,w) \|_{X}^2.
\ena
\end{lemme}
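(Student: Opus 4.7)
The plan is to mirror the proof of the previous lemma, now working with the $\widetilde{Z}^{0,-1}_T$ norm in place of $X^{0,-1/2}_T$. As in Molinet--Pilod \cite{MolinetPilodBO}, one decomposes the trilinear form arising from duality into three regions $I_A$, $I_B$, $I_C$ according to which of the modulations $\sigma_0 = \tau + \xi|\xi|$, $\sigma_1 = \tau_1 + \xi_1|\xi_1|$, $\sigma_2 = \tau_2 + \xi_2|\xi_2|$ is the largest. The starting observation is that, by a Cauchy--Schwarz in $\xi$ against $\int \langle\sigma\rangle^{-1-2\e'}\,d\xi$ applied Littlewood--Paley piece by piece (exactly as in the proof of Lemma \ref{lemma-estimaYL2}), one has the embedding $\|F\|_{\widetilde{Z}^{0,-1}} \lesssim \|F\|_{X^{0,-1/2+\e'}}$ for any $\e' > 0$. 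It therefore suffices to prove the sharper bilinear bound
\begin{equation*}
\|\partial_x P_+[W(P_-\partial_x u)]\|_{X^{0,-1/2+\e'}_T} \lesssim T^{1/8-\e}\|(u,w)\|_X^2
\end{equation*}
for some $\e' > 0$ chosen small enough in terms of $\e$.

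Inside each of the three regions, the same Strichartz trick as in the previous lemma applies: Lemma \ref{strichartz} gives $\|u\|_{L^4_{t,x}} \lesssim T^{1/8}\|u\|_{X^{0,1/2}_T}$, and Lemma \ref{le1} gives $\|u\|_{X^{0,b'}_T} \lesssim T^{b-b'}\|u\|_{X^{0,b}_T}$ for $-1/2 < b' \le b < 1/2$. Each $L^4$ factor appearing in the Molinet--Pilod bound is exchanged for an $X^{0,1/2}_T$ factor at the cost of $T^{1/8}$, and each $X^{0,b<1/2}$ norm produces an additional power of $T$. This is the same mechanism by which the $T^{1/8}$ gain entered in the previous lemma, and it propagates without change to the present setting.

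The upgrade from $X^{0,-1/2}$ to $X^{0,-1/2+\e'}$ is absorbed by transferring an $\e'$ amount of modulation regularity from the output to one of the input factors. In each region, the controlling modulation $\max(|\sigma_0|,|\sigma_1|,|\sigma_2|)$ is bounded below by a positive power of the frequencies (which is precisely what drives the Molinet--Pilod estimates), so an extra weight $\langle\sigma_{\max}\rangle^{\e'}$ on the output can be compensated by $\langle\sigma_i\rangle^{-\e'}$ on the dominant input factor, whose original modulation regularity was $1/2$. The cost of restoring the full $X^{0,1/2}$ norm is a further $T^{-\e'}$ produced by Lemma \ref{le1}. Taking $\e' < \e$ then yields the announced $T^{1/8-\e}$.

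The main obstacle is essentially technical bookkeeping: in each of the three regions $I_A$, $I_B$, $I_C$ one must verify that the Molinet--Pilod estimate still closes after redistributing $\e'$ of modulation weight, that the Strichartz-based $T^{1/8}$ gain persists uniformly in small $\e'$, and that the $\ell^2$ summations over Littlewood--Paley blocks are preserved. None of these steps introduces a genuinely new idea, but each requires revisiting the Molinet--Pilod argument in detail, particularly in the high-high to low interaction region where the resonance relation is tightest.
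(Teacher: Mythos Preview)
Your reduction via the embedding $\|F\|_{\widetilde{Z}^{0,-1}} \lesssim \|F\|_{X^{0,-1/2+\e'}}$ is correct (it follows by applying the Cauchy--Schwarz argument of Lemma~\ref{lemma-estimaYL2} to each Littlewood--Paley block and summing in $\ell^2$). The gap is in the second step: the bilinear estimate
\[
\big\|\partial_x P_+[W(P_-\partial_x u)]\big\|_{X^{0,-1/2+\e'}_T} \lesssim T^{1/8-\e}\|(u,w)\|_X^2
\]
does \emph{not} follow from the redistribution argument you describe. In the region $I_A$ where $|\sigma_0|=\max(|\sigma_0|,|\sigma_1|,|\sigma_2|)$, there is no input modulation larger than $\sigma_0$ to which the extra $\langle\sigma_0\rangle^{\e'}$ can be transferred. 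Concretely, the resonance identity gives $|\sigma_0|\gtrsim |\xi\,\xi_2|$, and the multiplier $\frac{|\xi||\xi_2|}{|\xi_1|}\langle\sigma_0\rangle^{-1/2+\e'}$ is of size $|\xi\xi_2|^{\e'}\cdot\frac{|\xi\xi_2|^{1/2}}{|\xi_1|}$; in the balanced regime $|\xi|\sim|\xi_2|\sim N$, $|\xi_1|\sim 2N$, this is $\sim N^{2\e'}$, a genuine derivative loss that neither the Strichartz factor nor the spare $1/8$ of modulation on $w$ can absorb (both are frequency-neutral). The Molinet--Pilod estimate at $b=-1/2$ is essentially sharp in this region, consistent with $L^2$ being the endpoint regularity for BO.

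The paper sidesteps this by \emph{not} passing through $X^{0,-1/2+\e'}$. It works directly with the duality for $\widetilde{Z}^{0,-1}$, testing against $g$ with $(\sum_N\|g_N\|_{L^2_\xi L^\infty_\tau}^2)^{1/2}\le 1$, and follows the $J_A,J_B,J_C$ decomposition already set up in \cite{MolinetPilodBO} for this specific norm. The $T^{1/8-\e}$ then comes from bounding $\|(g/\langle\sigma\rangle)^\vee\|_{\widetilde{L}^4}$ by an $X^{0,1/2-\e}_T$ norm and invoking Lemmas~\ref{le1} and~\ref{strichartz}; no upgrade of the output modulation exponent is needed.
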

\bnp
We can see using (3.43) of \cite{MolinetPilodBO} that
\bna
|J_A|\leq \big(\sum_{N}\|g_N \|_{L^2_{\xi}L^{\infty}_{\tau}}^2\big)^{1/2}\|w \|_{X^{0,3/8}_T}\|u \|_{L^4}
\leq CT^{1/2-3/8}\big(\sum_{N}\|g_N \|_{L^2_{\xi}L^{\infty}_{\tau}}^2\big)^{1/2}\|w \|_{X^{0,1/2}_T}\|u \|_{L^4}
\ena
and using estimate line 15 p 381  of \cite{MolinetPilodBO}
\begin{equation*}
|J_B|+|J_C|\leq \Big(\Big\|\Big(\frac{g}{\langle \sigma\rangle}\Big)^{\vee}\Big \|_{\widetilde{L}^4}
+\Big\|\Big(\frac{|g|}{\langle \sigma\rangle}\Big)^{\vee}\Big\|_{\widetilde{L}^4}\Big)\|w\|_{X^{0,1/2}_T}\big(\|u\|_{L^4}+\|u\|_{X^{-1,1}_T}\big).
\end{equation*}
But
\begin{equation*}
\begin{split}
\Big\|\Big(\frac{g}{\langle \sigma\rangle}\Big)^{\vee}\Big\|_{\widetilde{L}^4}
+\Big\|\Big(\frac{|g|}{\langle \sigma\rangle}\Big)^{\vee}\Big \|_{\widetilde{L}^4}
&\leq CT^{1/2-3/8-\e}\Big\|\Big(\frac{|g|}{\langle \sigma \rangle}\Big)^{\vee}\Big\|_{X^{0,1/2-\e}_T}\\
&\leq  CT^{1/8-\e}\Big\|\frac{|g|}{\langle \sigma \rangle^{1/2+\e}}\Big \|_{L^2_{\tau}L^2_{\xi}}\\
&\leq CT^{1/8-\e}\big(\sum_N \|\big\langle \sigma \big\rangle^{-1/2-\e}g_N \|_{L^2_{\tau}L^2_{\xi}}^2 \big)^{1/2}\\
&\leq CT^{1/8-\e}\big(\sum_N \int_{\xi}\int_{\tau}\big\langle \tau+\xi|\xi| \big\rangle^{-1-2\e}|g_N(\tau,\xi)|^2 \big)^{1/2}\\
&\leq CT^{1/8-\e} \big(\sum_N \|g_N \|_{L^2_{\xi}L^{\infty}_{\tau}}^2 \big)^{1/2},
\end{split}
\end{equation*}
which gives
\bna
|J_B|+|J_C| &\leq &CT^{1/8-\e} \big(\sum_N \|g_N \|_{L^2_{\xi}L^{\infty}_{\tau}}^2 \big)^{1/2}\|w \|_{X^{0,1/2}_T}\big(\|u \|_{L^4}+\|u \|_{X^{-1,1}_T}\big).
\ena
The result follows by duality.
\enp
Note that the two previous lemmas give Lemma \ref{bilinestim}.
\begin{lemme}
\label{lemmeexphigh}
Let $N\in \N^*$. We have the following estimates, uniformly in $N$.
\begin{equation*}
\begin{split}
\|P_{\geq N}(e^{i\frac{F}{2}}) \|_{L^{\infty}}&\leq \frac{C}{\sqrt{N}}\|F \|_{H^1},\\
\|P_{\geq N}(e^{i\frac{F_1}{2}}-e^{i\frac{F_2}{2}}) \|_{L^{\infty}}&\leq \frac{C}{\sqrt{N}}\big[1+\| F_1 \|_{H^1}+\| F_2 \|_{H^1}\big]\|F_1-F_2 \|_{H^1},\\
\end{split}
\end{equation*}
and 
\begin{equation*}
\|e^{i\frac{Q_N F_1}{2}}-e^{i\frac{Q_N F_2}{2}} \|_{L^{\infty}}\leq \frac{C}{\sqrt{N}}\big[1+\| F_1 \|_{H^1}+\| F_2 \|_{H^1}\big]\|F_1-F_2 \|_{H^1}.
\end{equation*}
\end{lemme}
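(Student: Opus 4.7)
The plan is to reduce all three estimates to one elementary observation: for any $g\in H^1(\T)$ and any $N\in\N^*$, by writing $P_{\geq N}g$ as its Fourier series and applying Cauchy--Schwarz one obtains
\begin{equation*}
\|P_{\geq N}g\|_{L^{\infty}} \leq \sum_{k\geq N}|\widehat{g}(k)| \leq \Big(\sum_{k\geq N}\frac{1}{k^2}\Big)^{1/2}\Big(\sum_{k\geq N}k^2|\widehat{g}(k)|^2\Big)^{1/2} \leq \frac{C}{\sqrt{N}}\,\|\partial_x g\|_{L^2},
\end{equation*}
using $\sum_{k\geq N}k^{-2}\leq C/N$. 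The same computation with the sum running over $\{|k|>N\}$ gives the corresponding bound $\|Q_N f\|_{L^{\infty}}\leq (C/\sqrt{N})\|\partial_x f\|_{L^2}$. This is the only genuinely ``$N$-dependent'' ingredient; the rest is chain rule and Sobolev embedding.

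For the first estimate I would apply the above to $g=e^{iF/2}$. Since $F$ is real, $|e^{iF/2}|=1$ pointwise, and $\partial_x e^{iF/2}=\frac{i}{2}(\partial_x F)e^{iF/2}$, so $\|\partial_x e^{iF/2}\|_{L^2}\leq \frac{1}{2}\|F\|_{H^1}$, which closes the estimate. For the second, I would take $g=e^{iF_1/2}-e^{iF_2/2}$ and decompose
\begin{equation*}
\partial_x g \,=\, \tfrac{i}{2}\bigl[\partial_x(F_1-F_2)\bigr]\,e^{iF_1/2} \,+\, \tfrac{i}{2}(\partial_x F_2)\bigl(e^{iF_1/2}-e^{iF_2/2}\bigr).
\end{equation*}
The first summand has $L^2$ norm bounded by $\frac{1}{2}\|F_1-F_2\|_{H^1}$. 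For the second, I would use the pointwise bound $|e^{iF_1/2}-e^{iF_2/2}|\leq \frac{1}{2}|F_1-F_2|$ combined with the one-dimensional Sobolev embedding $H^1(\T)\hookrightarrow L^{\infty}(\T)$ to control its $L^{\infty}$ norm by $C\|F_1-F_2\|_{H^1}$; multiplying by $\|\partial_x F_2\|_{L^2}\leq \|F_2\|_{H^1}$ and summing yields the prefactor $C(1+\|F_2\|_{H^1})\|F_1-F_2\|_{H^1}$, and symmetrizing produces the stated $C(1+\|F_1\|_{H^1}+\|F_2\|_{H^1})$.

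For the third estimate there is no outer projector, so the Cauchy--Schwarz gain cannot be applied to the whole exponential. Instead I would use the elementary pointwise inequality $|e^{ia}-e^{ib}|\leq |a-b|$ to get $|e^{iQ_NF_1/2}-e^{iQ_NF_2/2}|\leq \frac{1}{2}|Q_N(F_1-F_2)|$, and then apply the $Q_N$-version of the high-frequency bound to $f=F_1-F_2$, giving $\|Q_N(F_1-F_2)\|_{L^{\infty}}\leq (C/\sqrt{N})\|F_1-F_2\|_{H^1}$. This is in fact strictly stronger than the stated bound (no $(1+\|F_1\|_{H^1}+\|F_2\|_{H^1})$ factor is necessary), so the statement follows a fortiori. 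I do not foresee any serious obstacle here; the whole content of the lemma is that high-frequency truncation gains $N^{-1/2}$ from $\dot H^1$ to $L^{\infty}$ on $\T$, and that $e^{iF/2}$ inherits $H^1$-regularity from $F$ via its $L^{\infty}$-boundedness and the chain rule.
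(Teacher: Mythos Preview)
Your proof of the first two estimates is essentially identical to the paper's: both isolate the high-frequency Cauchy--Schwarz bound $\|P_{\geq N}g\|_{L^\infty}\leq CN^{-1/2}\|\partial_x g\|_{L^2}$ and then control $\partial_x(e^{iF_1/2}-e^{iF_2/2})$ via the same chain-rule splitting (the paper pairs $\partial_x F_1$ with the difference of exponentials and $e^{iF_2/2}$ with $\partial_x(F_1-F_2)$, you swap the roles, but this is cosmetic).

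For the third estimate your route is genuinely different and simpler. The paper uses the Gagliardo--Nirenberg inequality
\[
\|e^{iQ_NF_1/2}-e^{iQ_NF_2/2}\|_{L^\infty}\lesssim \|\,\cdot\,\|_{L^2}+\|\,\cdot\,\|_{H^1}^{1/2}\|\,\cdot\,\|_{L^2}^{1/2},
\]
bounds the $H^1$ norm by $C(1+\|F_1\|_{H^1}+\|F_2\|_{H^1})\|F_1-F_2\|_{H^1}$ via the chain-rule computation, and gains $N^{-1}$ on the $L^2$ norm from $\|Q_N(F_1-F_2)\|_{L^2}\leq N^{-1}\|F_1-F_2\|_{H^1}$; the interpolation then produces the $N^{-1/2}$. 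Your argument bypasses all of this: the pointwise Lipschitz bound $|e^{ia}-e^{ib}|\leq|a-b|$ followed directly by $\|Q_N(F_1-F_2)\|_{L^\infty}\leq CN^{-1/2}\|F_1-F_2\|_{H^1}$ is both shorter and, as you note, gives the stronger estimate without the $(1+\|F_1\|_{H^1}+\|F_2\|_{H^1})$ prefactor. Both are correct; yours is the more economical choice here.
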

\begin{remark}
Note that these estimates are very close to the estimate
\bna
\|P_{\geq 1}(e^{i\frac{F}{2}}) \|_{L^{\infty}([0,\lambda])}\leq C\|F_x \|_{L^2([0,\lambda])}
\ena
but uniform on the length $\lambda$, as used by Molinet \cite{molinetBOT}.

This gain comes from the fact that the inequality is subcritical. Roughly speaking, $L^{\infty}$ scales like $H^{1/2}$ which gives a gain at high frequency.
\end{remark}
\bnp
We will mainly use the following estimate, which is only the Sobolev embedding at high frequency.
\begin{equation}\label{sobhigh}
\begin{split}
\|P_{\geq N}(f)\|_{L^{\infty}}&\leq \sum_{n=N}^{+\infty} \big|\hat{f}(n)\big|\leq
\big(\sum_{n=N}^{+\infty} \frac{1}{n^2}\big)^{1/2}\big(\sum_{n=N}^{+\infty} n^2\big|\hat{f}(n)\big|^2\big)^{1/2}\leq \frac{C}{\sqrt{N}}\|f\|_{H^1}.
\end{split}
\end{equation}
We apply this estimate to $f=e^{i\frac{F_1}{2}}-e^{i\frac{F_2}{2}}$.
\begin{equation*}
\begin{split}
&\|P_{\geq N}(e^{i\frac{F_1}{2}}-e^{i\frac{F_2}{2}}) \|_{L^{\infty}}\\
&\leq \frac{C}{\sqrt{N}}\|e^{i\frac{F_1}{2}}-e^{i\frac{F_2}{2}} \|_{H^1}\leq \frac{C}{\sqrt{N}}\big(\|e^{i\frac{F_1}{2}}-e^{i\frac{F_2}{2}} \|_{L^2}+\|(\partial_x F_1)e^{i\frac{F_1}{2}}-(\partial_x F_2)e^{i\frac{F_2}{2}} \|_{L^2}\big)\\
&\leq\frac{C}{\sqrt{N}}\big(\| F_1-F_2 \|_{L^2}+\| F_1 \|_{H^1}\min(1,\|F_1-F_2 \|_{H^1})+\|F_2\|_{H^1}\|F_1-F_2 \|_{H^1}\big)
\end{split}
\end{equation*}
where, for the last estimate, we have used estimate (\ref{estimlipsexp}) together with
\bna
(\partial_x F_1)e^{i\frac{F_1}{2}}-(\partial_x F_2)e^{i\frac{F_2}{2}}=(\partial_x F_1)\big(e^{i\frac{F_1}{2}}-e^{i\frac{F_2}{2}}\big)+e^{i\frac{F_2}{2}}\partial_x (F_1-F_2).
\ena
This proves the second inequality of the lemma. The first estimate of the lemma is a consequence of the previous inequality with $F_2=0$ using the 
fact that $P_{\geq N}(e^{i\frac{F}{2}})=P_{\geq N}(e^{i\frac{F}{2}}-1)$.

The last inequality is proved using the Gagliardo-Nirenberg inequality on $\T$, i.e.
\bna
\|e^{i\frac{Q_N F_1}{2}}-e^{i\frac{Q_N F_2}{2}} \|_{L^{\infty}}\leq \|e^{i\frac{Q_N F_1}{2}}-e^{i\frac{Q_N F_2}{2}} \|_{L^2}
+\|e^{i\frac{Q_N F_1}{2}}-e^{i\frac{Q_N F_2}{2}} \|_{H^1}^{1/2}\|e^{i\frac{Q_N F_1}{2}}-e^{i\frac{Q_N F_2}{2}} \|_{L^2}^{1/2}
\ena
The previous computation shows that
\bna
\|e^{i\frac{Q_N F_1}{2}}-e^{i\frac{Q_N F_2}{2}} \|_{H^1}&\leq &C \big[1+\| Q_NF_1 \|_{H^1}+\| Q_NF_2 \|_{H^1}\big]\|Q_NF_1-Q_NF_2 \|_{H^1}\\
&\leq &C\big[1+\| F_1 \|_{H^1}+\| F_2 \|_{H^1}\big]\|F_1-F_2 \|_{H^1}
\ena
But, the mean value theorem gives
\bna
\|e^{i\frac{Q_N F_1}{2}}-e^{i\frac{Q_N F_2}{2}} \|_{L^2}\leq \|Q_N(F_1-F_2) \|_{L^2}\leq \frac{C}{N}\|F_1-F_2 \|_{H^1}.
\ena
\enp


\end{document}